\newcommand\concel[2]{\ooalign{$\hfil#1\mkern0mu/\hfil$\crcr$#1#2$}}
\newcommand\nxlra[1]{\mathrel{\mathpalette\concel{\xlongleftrightarrow{#1}}}}
\numberwithin{counterEnvDefault}{section}
\theoremstyle{plain}
\newtheorem{Th}{Theorem}[section]
\newtheorem{Lemma}[Th]{Lemma}
\newtheorem{Cor}[Th]{Corollary}
\newtheorem{Prop}[Th]{Proposition}
\theoremstyle{definition}
\newtheorem{Def}[Th]{Definition}
\newtheorem{Rem}[Th]{Remark}
\newtheorem*{Lemma*}{Lemma}
\renewcommand\epsilon\varepsilon
\definecolor{colorlinks}{RGB}{0, 24, 168}
\definecolor{colorcites}{RGB}{0, 138, 118}
\newcommand{\bfP}{\mathbf{P}}
\newcommand{\bfj}{\mathbf{j}}
\newcommand{\calY}{\mathcal{Y}}
\newcommand{\calC}{\mathcal{C}}
\newcommand{\calH}{\mathcal{H}}
\newcommand{\calD}{\mathcal D}
\newcommand{\calF}{\mathcal{F}}
\newcommand{\calE}{\mathcal{E}}
\newcommand{\lra}{\longleftrightarrow}
\newcommand{\N}{\ensuremath{\mathbb{N}}}
\newcommand{\R}{\ensuremath{\mathbb{R}}}
\newcommand{\Z}{\ensuremath{\mathbb{Z}}}
\newcommand{\PP}{\ensuremath{\mathbb{P}}}
\newcommand{\e}{\ensuremath{\mathrm{e}}}
\newcommand{\eps}{\varepsilon}
\newcommand{\1}{\mathds{1}}
\newcommand{\om}{\omega}
\newcommand{\La}{\Lambda}
\newcommand{\calA}{\mathcal{A}}
\newcommand{\calB}{\mathcal{B}}
\newcommand{\bbS}{\mathbb{S}}
\newcommand{\bbR}{\mathbb{R}}
\newcommand{\calL}{\mathcal L}
\newcommand{\sfC}{\mathsf C}
\newcommand{\sfD}{\mathsf D}
\newcommand{\bbP}{\mathbb P}
\newcommand{\bfK}{\mathsf K}
\def\les{\!{\begin{array}{c}<\\[-10pt]\scriptstyle{\frown}\end{array}}\!}
\def\ges{\!{\begin{array}{c}>\\[-10pt]\scriptstyle{\frown}\end{array}}\!}
\newcommand\xlra{\xleftrightarrow}
\renewcommand\lra{\leftrightarrow}
\title{Near-critical Ornstein--Zernike theory for the planar random-cluster model}
\author{Lucas D'Alimonte}
\address{LPSM, Sorbonne Université, Paris, France}
\email{dalimonte@lpsm.paris}
\author{Ioan Manolescu}
\address{Université de Fribourg, Switzerland}
\email{ioan.manolescu@unifr.ch}
\date{\today}
\keywords{%
   FK percolation; Ornstein--Zernike theory; near-critical regime}
\begin{document}

\maketitle

\begin{abstract}
We develop an Ornstein--Zernike theory for the two-dimensional random-cluster model with $1 \leq q <4$ that also applies in its near-critical regime. 
In particular, we prove an asymptotic formula for the two-point function which holds uniformly for~$p < p_c$ and blends the subcritical and near-critical behaviours of the model.

The analysis is carried out by studying the renewal properties of a subcritical percolation cluster, \emph{at the scale of the correlation length}. 
More precisely, we explore sequentially the cluster in a given direction, by slices of thickness comparable to the correlation length. 
We show that this exploration satisfies the properties of a {\em killed Markov renewal process} --- a class of processes that may be analysed independently and have Brownian behaviour. In addition to the  two-point function estimate, we derive other consequences of the Ornstein--Zernike theory such as an invariance principle for the rescaled cluster and the strict convexity of the inverse correlation length --- all at the scale of the correlation length, uniformly in~$p<p_c$. 

Finally, our approach differs from that of earlier papers of Campanino, Ioffe, Velenik and others, with the cluster being dynamically explored rather than constructed from its diamond decomposition. 
\end{abstract}

\setcounter{tocdepth}{2}

\makeatletter
\def\l@section{\@tocline{1}{0pt}{1em}{}{\bfseries}}
\def\l@subsection{\@tocline{2}{0pt}{4em}{}{}}
\makeatother

\tableofcontents

\section{Introduction}

Rigorous understanding of the correlation structure of random fields is one of the main objectives of modern statistical mechanics. 
This article focuses on one of the most studied such models: the so-called random-cluster measure (also known as FK-percolation).
The classical Ornstein--Zernike (OZ) formula describes the behaviour of the two-point function~$G(x) := \phi_{p,q}[0 \lra x]$ when~$x$ goes to infinity, where~$\phi_{p,q}$ is the random-cluster measure of parameters~$p$ and~$q$, and~$p$ is such that the model is in its subcritical (or disordered) phase. It quantifies the behaviour of~$G$ beyond its exponential decay and is sharp when~$x$ tends to infinity. 

The OZ asymptotic was first conjectured in two seminal works by Ornstein and Zernike in 1914~\cite{OrnsteinandZernike}, and Zernike~\cite{Zernike}. Trying to correct a formula describing the phenomenon of \emph{opalescence} in a crystal, they provided a non-rigorous computation of what would later become the OZ formula. 
The first rigorous implementation of this reasoning is due to Abraham and Kunz~\cite{abrahamkunzOZ} and Paes--Leme~\cite{paeslemeOZ} in the context of classical lattice gases theory, by means of a graphical representation of the partition function. 
Soon after, the Ornstein--Zernike asymptotic was proved for a variety of models \emph{in a perturbative regime}, see~\cite{bricmontfrohlichozperturbative, perturbativeoz}.

The next breakthrough consisted in a rigorous derivation of the Ornstein--Zernike asymptotic in the whole regime of exponential decay of the correlation functions; this was done in the case of the self-avoiding walk in the direction of an axis in~\cite{chayeschayesozsawaxis} and later on in any direction in~\cite{Ioffe98ornstein-zernikebehaviour}. For percolation models, the case of Bernoulli percolation was first treated for an axis direction in~\cite{CampaninoChayesChayesBernoulliperco} and later on in any direction in~\cite{ornsteinzernikebernoulli2002}. The case of subcritical Ising models was treated in~\cite{ozisingcampanino} via the random-line graphical representation of the two-point function. Finally, the analysis was carried out for all the subcritical random-cluster measures in~\cite{campaninonioffevelenikozrandomcluster}. 

In recent developments, the theory has been extended to Ising models with long-range interactions~\cite{longrangeozvelenikaounott}, but was also shown to fail in some long-range Ising models, when the coupling constants decay too slowly~\cite{nonanalyticitycorrelationlength, 2ptfunctionsaturation}.   

All works cited above aim to understand  correlations at a fixed, subcritical temperature. In this article, we are interested in how the OZ formula behaves when the temperature parameter ($p$ in the random-cluster model) approaches its critical value. Our study is limited to the random-cluster model on the two-dimensional square lattice~$\mathbb Z^2$, with~$q \in [1,4)$ and~$p<p_c(q) = \frac{\sqrt q}{1 + \sqrt q}$. The near-critical properties of the model were recently studied in \cite{DuminilCopinManolescuScalingRelations}.

Our main output is an asymptotic formula for the function~$x \mapsto G(x)$ \emph{uniformly both in~$p$ and~$x$}, in the regime~$p<p_c$ and~$\|x\|$  above the correlation length. 
When~$p=p_c$ and~$q \in [1,4]$  the phase transition is known to be continuous \cite{DCSidoraviciusTassionContinuity} and the function~$G$ decays algebraically in~$\Vert x \Vert$. 
The main feature of our analysis is that it captures how the two-point function switches from a disordered behaviour (characterised by its exponential decay) 
to a near-critical behaviour (with the same algebraic decay as at criticality). 
To do this, we needed to replace the arguments of \cite{campaninonioffevelenikozrandomcluster} relying on the finite energy property by RSW-type constructions at the scale of the correlation length. 

Besides this new result, another interest of the present paper is to revisit the approach developed in~\cite{campaninonioffevelenikozrandomcluster} and related papers. 
Indeed, while our argument uses key elements of~\cite{campaninonioffevelenikozrandomcluster}, several differences may be highlighted. 
We circumvent the use of the ``skeleton calculus" and employ a direct exploration of the subcritical cluster in a given direction. 
The argument also bypasses the perturbation theory used in~\cite{campaninonioffevelenikozrandomcluster} to deduce the strict convexity properties of the correlation length and of the associated Wulff shape. 
Instead, we deduce these properties from rather standard large deviations estimates for the associated random exploration.  

Our argument is limited to the two-dimensional model as this is the only case where the near-critical behaviour of the random-cluster model has been established. 
We believe that the same strategy may be implemented in general dimension for fixed~$p < p_c$, where all RSW arguments may be replaced by finite-energy constructions, to reprove the results of \cite{campaninonioffevelenikozrandomcluster}.

We conclude this introduction by mentioning that the crossover between the Ornstein--Zernike regime and polynomial decay characterizing criticality has been investigated in two different contexts. 
 In~\cite{MichtaSlade}, the authors obtain precise results on the asymptotic decay of the Green function of the massive random walk on $\Z^d$, using precise computations involving Bessel-type representations of the Green function. 
Closer to our result, the recent~\cite{LiuSlade} establishes a uniform formula for the two-point function of various \emph{high-dimensional} statistical mechanics models, including Bernoulli percolation in dimension $d \geq 15$. 
The result is more precise in this setting, as the authors identify the polynomial exponent of the critical term, arising from the mean-field behaviour of high-dimensional percolation.

\subsection{Definition of the model}\label{sec:def_model}

In this section we briefly define the model, with some additional features deferred to Section~\ref{sec:background}. We refer to the monographs~\cite{grimmetttherandomclustermodel, duminilcopin2017lectures} for further background.

We slightly abuse notation by writing~$\Z^2 = (\Z^2, E(\Z^2))$ for the two-dimensional square lattice, with vertex set~$\mathbb Z^2$ and edges between vertices at Euclidean distance~$1$.  
For~$G = (V(G),E(G))$ a subgraph of~$\Z^2$, the space of percolation configurations on~$G$ is~$\Omega^G := \{0,1\}^{E(G)}$. 
For an edge~$e \in E(G)$ and~$\om \in \Omega^G$, we say that~$e$ is \emph{open}  if~$\om(e) = 1$ and \emph{closed} otherwise. 
A percolation configuration will be identified both with the set of its open edges as well as with the sub-graph of~$G$ with vertex set~$V(G)$ and edge-set formed of the open edges of~$\omega$. 
The connected components of~$\omega$ are called \emph{clusters}.  

The boundary~$\partial G$ of~$G$ is the set of vertices of~$G$ incident to at least one edge of~$E(\Z^2)\setminus E(G)$.
A \emph{boundary condition}~$\eta$ is a partition of~$\partial G$; we say that the vertices of~$\partial G$ that belong to the same component of~$\eta$ are {\em wired} together. 
To a boundary condition~$\eta$ and a percolation configuration~$\om \in \{0,1\}^{E(G)}$, associate the graph~$\om^\eta$ which is obtained from $\omega$ by identifying all the mutually wired vertices of~$\partial G$. 

A percolation configuration~$\xi$ on~$\Z^2$ induces certain boundary conditions on~$\partial G$: two vertices are wired together if they are connected in~$\xi\setminus E(G)$. We shall make a slight notational abuse by identifying the percolation configuration~$\xi$ with the boundary condition it induces on~$\partial G$, and keeping the notation~$\om^\xi$ when~$\xi$ is a percolation configuration on~$\Z^2 \setminus G$. 
Two boundary conditions play a special role: the {\em free} boundary condition, denoted by~$0$, in which no boundary vertices are wired together,
and the {\em wired} boundary condition, denoted by~$1$, in which all boundary vertices are wired together.  

The random-cluster measure on a finite subgraph~$G$ of~$\Z^2$, with boundary conditions~$\eta$ and parameters~$p \in (0,1)$ and~$q \geq 1$ is defined as follows. For a percolation configuration~$\om$ on~$G$ write~$o(\om)$ for number of open edges of~$\om$, and~$k(\om^\eta)$ the number of clusters of~$\om^\eta$. 
Set
 \begin{equation*}
	\phi^\eta_{G,p,q}[\om] = \frac{1}{Z^\eta_{G,p,q}} \left(\frac{p}{1-p}\right)^{o(\om)}q^{k(\om^\eta)},
 \end{equation*}
 where~$Z^{\eta}_{G,p,q}$ is called the \emph{partition function} of the model, and is the unique constant guaranteeing that~$\phi^\eta_{G,p,q}$ is a probability measure. 
 
Random-cluster measures may also be defined on the full graph~$\Z^2$ either through the DLR formalism or by taking limits of measures on increasing finite subgraphs of~$\Z^2$. Due to monotonicity properties, it is classical that the free and wired measures~$\phi^0_{G,p,q}$ and~$\phi^1_{G,p,q}$ admit limits as~$G$ increases to~$\Z^2$. 
These will be denoted by~$\phi^0_{p,q}$ and~$\phi^1_{p,q}$, respectively, and are instances of {\em infinite-volume measures}, which are invariant under translations and ergodic.
 
It was proved in~\cite{duminilbeffara} that the model exhibits a phase transition at the self-dual parameter~$p_c(q)= \tfrac{\sqrt q}{1 + \sqrt q}$. 
That is, for any~$p<p_c(q)$ (in the so-called \emph{subcritical regime}), there exists a unique infinite-volume measure ($\phi^0_{p,q}=\phi^1_{p,q}$) and all clusters are almost surely finite under this measure. When~$p > p_c(q)$, the infinite-volume measure is again unique and contains almost surely a unique infinite cluster. 
The phase transition was shown to be continuous for~$q \in [1,4]$~\cite{DCSidoraviciusTassionContinuity}, in the sense that there exists a unique infinite-volume measure also at~$p=p_c(q)$. 
Additionally, strong RSW-type estimates were established at criticality for these values of~$q$~\cite{DCSidoraviciusTassionContinuity,DumTas19,DumManTas21}. 
For~$q >4$, the phase transition is discontinuous~\cite{DCgagnebinHarelManolescuDiscontinuity,RaySpi20}, 
which is to say that~$\phi^0_{p_c(q),q}\neq\phi^1_{p_c(q),q}$, with the former having a subcritical behaviour and the latter a super-critical one.

As already mentioned, we are treating here the case of a continuous phase transition, for which the correlation length diverges as $p \nearrow p_c$. 
Furthermore, for reasons explained in Remark~\ref{rem:q=4}, the case $q = 4$ is excluded from our analysis. 

\begin{center}
\bf Henceforth, when~$q \in [1,4)$ is fixed, we will omit it from the notation. 
As the infinite-volume measure is unique for any $p \in [0,1]$, we denote it by~$\phi_p$.
\end{center}

For the exposition of our results, we need to introduce two classical quantities in the study of subcritical and near-critical random-cluster model. Fix $q\in [1,4)$ and~$p<p_c$.
The \emph{correlation length} of the model is defined as the following limit, the existence of which is based on super-multiplicativity arguments. For~$\vec v \in \R^2 \setminus \{0\}$, set\footnote{$\lfloor n\vec v \rfloor$ denotes the vertex of~$\Z^2$ which is the closest to~$n\vec v$. In case of equality, we arbitrarily choose the top-left most one.}
\begin{equation*}
\xi_p(\vec v) = \big(\lim_{n \rightarrow \infty} -\tfrac 1 n \log \phi_p[0 \lra \lfloor n\vec v \rfloor]\big)^{-1}.
\end{equation*}

Write $\|.\|$ for the Euclidian norm on $\bbR^2$ and let $\bbS^1 = \{ \vec v \in \bbR^2: \|\vec v\|  =1\}$.
Notice that, for any $\vec v \neq 0$, 
\begin{align}\label{eq:xi_norm}
	\| \vec v\| \xi_p(\vec v) = \xi_p(\vec v/\|\vec v\|),
\end{align} 
so we will mostly consider the case~$\vec v \in \bbS^1$. 
The results of~\cite{duminilbeffara} imply that whenever~$p<p_c$,~$\xi_p(\vec v) > 0$ for any~$\vec v \in \bbS^1$.

For~$R \geq 0$, let~$\Lambda_R := \{-R, \dots, R\}^2$ and~$\La_R(x)$ for its translate by~$x$. 
Define the \emph{critical one-arm probability} at distance~$R$ as
\begin{equation*}
\pi_1(R) = \phi_{p_c}^0[0 \lra \partial \Lambda_R]. 
\end{equation*}

\subsection{Results}

We are now ready to state our main results. 
For positive quantities~$f,g$ we write~$f \les g$ to mean that there exists a constant~$C >0$ such that~$f \leq Cg$. The constants~$C$ may be chosen uniformly in certain parameters in the definitions of~$f$ and~$g$ which will be explicitly stated -- these will generally be~$p < p_c$,~$n$ and~$t$ below. 
Write~$f \asymp g$ when~$f \les g$ and~$g \les f$. 
We will often use the expression {\em uniformly in~$p < p_c$} to mean uniformly for~$p \in [\eps, p_c)$ for some~$\eps > 0$ fixed throughout the paper. The choice of~$\eps$ is arbitrary, but may affect the value of certain constants. 

\begin{Th}[Ornstein--Zernike asymptotics at the scale of the correlation length]\label{thm:main}
	Fix~$q \in [1,4)$. Then, uniformly in~$\vec v \in \mathbb S^1$,~$p  < p_c$ and~$n \geq \xi_p(\vec v)$,
	\begin{equation}\label{eq:main}
    	\phi_p\big[0 \lra \lfloor n \vec v \rfloor\big] \asymp 
		\pi_1( \xi_p(\vec v) ) ^2 \sqrt{\tfrac{\xi_p(\vec v)}{n}}\, \e^{-\frac {n}{\xi_p(\vec v)}}.
	\end{equation}
\end{Th}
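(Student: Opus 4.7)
The plan is to implement the dynamical exploration strategy outlined in the abstract: reduce $\phi_p[0 \lra \lfloor n\vec v\rfloor]$ to the analysis of a killed Markov renewal process describing a slab-by-slab exploration of the cluster of $0$ along $\vec v$ at the scale of the correlation length $N := \xi_p(\vec v)$. Concretely, I would fix coordinates adapted to $\vec v$ and explore the cluster of $0$ by sweeping forward in slabs of thickness of order $N$ orthogonal to $\vec v$, using the domain Markov property of $\phi_p$ to maintain the conditional law of the unexplored region. The finite-energy arguments used in the fixed-$p$ theory of \cite{campaninonioffevelenikozrandomcluster} have to be replaced throughout by RSW-type estimates at the correlation length, available thanks to \cite{DuminilCopinManolescuScalingRelations}: on boxes of side $N$, crossing probabilities remain bounded away from $0$ and $1$ uniformly in $p<p_c$.

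Second, I would design a renewal event $\mathcal{R}$ at scale $N$: a mesoscopic pattern in a slab through which the explored cluster crosses in a controlled way, surrounded by dual circuits that isolate the crossing from the rest of the configuration. Using RSW at the correlation length one can arrange $\phi_p[\mathcal{R}] \geq c > 0$ uniformly in $p<p_c$, and conditionally on $\mathcal{R}$ the cluster on either side of the pattern is decoupled given the position of the bridging crossing. Listing the successive slabs in which $\mathcal{R}$ occurs during the exploration and the corresponding bridging points $X_0, X_1, \ldots \in \Z^2$, one obtains a killed Markov renewal process whose increments $Y_k = X_k - X_{k-1}$ have uniformly exponential tails on scale $N$ (by subcriticality above scale $N$) and non-degenerate transverse variance (because $\mathcal{R}$ can occur at a macroscopic number of transverse positions).

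Third, the target probability factorises, up to multiplicative constants, as a product of three contributions. The starting point $0$ must first connect to a renewal pattern at distance $N$, which by the ``near-critical up to scale $N$'' paradigm contributes a factor $\pi_1(N)$; by the same token $\lfloor n\vec v\rfloor$ contributes a symmetric factor $\pi_1(N)$. In between, the renewal walk $(X_k)$ must survive and reach $\lfloor n\vec v\rfloor$. Applying an exponential tilt to the increment law that exactly cancels the survival cost identifies $1/\xi_p(\vec v) = 1/N$ as the Legendre dual of the cumulant generating function of the tilted walk and yields the factor $e^{-n/N}$; a local central limit theorem for the tilted Markov renewal process then produces the Gaussian factor $\sqrt{N/n}$.

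The main obstacle will be the second step, namely the construction of $\mathcal{R}$. One needs a mesoscopic gadget at scale $N$ whose occurrence can be genuinely re-sampled against any admissible exterior configuration, so as to decouple the past and future of the exploration. At fixed $p<p_c$ finite energy realises such a gadget at a single vertex; uniformly in $p$ it must instead be designed at scale $N$, which requires careful RSW manipulations together with a choice of boundary conditions compatible with the slab exploration. Verifying that the resulting Markov renewal process has a strictly convex rate function --- so that both the local CLT applies and $\vec v \mapsto 1/\xi_p(\vec v)$ is strictly convex --- is the other delicate ingredient, and is precisely where our approach diverges from \cite{campaninonioffevelenikozrandomcluster}: rather than going through the skeleton/diamond decomposition and perturbative arguments, we aim to derive it from large-deviation estimates for the exploration itself.
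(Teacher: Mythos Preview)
Your proposal captures the broad skeleton --- slab exploration at scale $L(p)\asymp\xi_p(\vec v)$, renewal structure built from RSW at the correlation length, two $\pi_1$ factors for the endpoints, and a local CLT for the $\sqrt{N/n}$ --- but there is one structural choice that, as written, would not close. You explore in slabs orthogonal to the \emph{target} direction $\vec v$ and then speak of a single tilt ``that exactly cancels the survival cost''. The paper does the opposite: it fixes an auxiliary direction $\vec w$, explores orthogonally to $\vec w$, and observes that the resulting KMRP, conditioned on survival, has a drift $\mu(p,\vec w)$ so that the \emph{typical} endpoint lies in the direction $\vec v(\vec w)=(\vec w+\mu\,\vec w^\perp)/\|\cdot\|$. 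The pure-exponential survival rate along $\vec w$ is the halfplane rate $1/\xi_p^*(\vec w)$, not $1/\xi_p(\vec w)$. If you explore along $\vec v$, then for non-axial $\vec v$ the conditioned walk drifts \emph{away} from $n\vec v$, and hitting $n\vec v$ is a large-deviation event; a survival tilt alone does not fix this --- you would also need a transverse tilt, which is precisely the CIV-style analysis the paper avoids. Instead the paper proves that $\vec w\mapsto\vec v(\vec w)$ is a bijection of $\mathbb S^1$ (Proposition~\ref{prop:v_one_to_one}), chooses $\vec w$ with $\vec v(\vec w)=\vec v$, applies the local CLT \emph{at the mean} (Corollary~\ref{cor:OZw}), and reads off $\xi_p(\vec v)=\xi_p^*(\vec w)/\langle\vec v,\vec w\rangle$. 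No Legendre transform is needed for the main theorem; the large-deviation estimate is used only to establish the bijectivity of $\vec v(\cdot)$ and the strict convexity of the Wulff shape.

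Your renewal gadget is also simpler than what the paper implements. A local pattern surrounded by dual circuits does not, by itself, decouple past from future \emph{under the conditioning} $\{X_n\neq\dagger\}$: the survival conditioning pushes the cluster to be wider than a single slab, and one must first control this. The paper introduces \emph{pre-renewal times} --- times $t$ where the explored cluster meets $\partial\calH_{\leq t}$ in a single segment --- and shows they have positive density under the survival conditioning via a supermartingale argument on the number $N_t$ of active segments (Proposition~\ref{prop:equation_martingale}). At a pre-renewal time, a mixing estimate (Proposition~\ref{prop:pre-renewal_is_renewal}) shows that the future cluster \emph{beyond a half-step} can be resampled from a fixed law with uniformly positive probability; genuine renewals are then manufactured by dominating the residual dependence with i.i.d.\ geometric memory variables. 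The half-step ``link'' between past and future is never independent of the past (Remark~\ref{rem:link}), a subtlety with no analogue in a dual-circuit or diamond-decomposition scheme.
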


Compared to the classical OZ results of~\cite{campaninonioffevelenikozrandomcluster}, the advantage of our result is that it is uniform in~$p$. 
Indeed, for~$q \in [1,4)$,~$\xi_p(\vec v) \to \infty$ as~$p \nearrow p_c$. 
Previous results often employ local surgeries based on finite energy at scales lower than~$\xi_p(\vec v)$. As such they do not apply when~$\xi_p(\vec v)$ diverges.  
Our approach blends the critical and subcritical behaviour, each accounting for one of the terms in the right-hand side of~\eqref{eq:main}: 
the term~$\e^{-\frac {n}{\xi_p(\vec v)}} (\frac{n}{\xi_p(\vec v)})^{-1/2}$ is reminiscent of classical OZ-type formulas and is entirely a subcritical phenomenon, while the term~$\pi_1( \xi_p(\vec v)) ^2$ appears due to the near-critical behaviour of the model. 
Finally, when~$n \leq \xi_p(\vec v)$, the right-hand side may be replaced by~$\pi_1(n) ^2$, as proved in~\cite{DuminilCopinManolescuScalingRelations}.

Theorem \ref{thm:main} is a consequence of a more detailed description of the cluster of~$0$ when conditioned to be connected to a half-plane at a distance~$n$ in the direction~$\vec w$, dual to~$\vec v$; see Theorem~\ref{thm:killed_renewal_structure} below. 
We would like to emphasise that Theorem~\ref{thm:killed_renewal_structure} also has other consequences, such as: 
\begin{itemize}
\item Corollary~\ref{cor:pure_exp} which states that the probability for $0$ to be connected to the half-space~$\{\langle x,\vec w\rangle \geq n\}$ is uniformly comparable to $\pi_1( \xi_p(\vec w))$ times a pure exponential of $n/\xi_p(\vec w)$. 
\item Theorem~\ref{thm: strict convexity} which states that the inverse correlation length~$\xi_p$ is strictly convex in terms of the direction, as is its convex dual, known as the \emph{Wulff shape}. 
\item Theorem~\ref{thm: invariance principle} which establishes an invariance principle for the cluster of~$0$ when conditioned to be connected to~$\lfloor n \vec v\rfloor$ or to a half-plane.
\end{itemize}
Contrary to previous approaches, Theorem~\ref{thm:killed_renewal_structure} provides all these consequences simultaneously. 

\begin{Rem}\label{rem:ising}
In the special case $q=2$ of the FK-Ising model, it is interesting to confront~\eqref{eq:main} with the exact solution of the Ising model. 
It was pointed out to us by Gordon Slade, whom we thank for this observation, that a uniform version of~\cite[Theorem 2.7.2]{Palmer_book} indeed reveals the crossover between the Ornstein--Zernike term and the critical term $\pi^1(\xi_p(\vec v))$, which in the cited reference corresponds to the square of the critical magnetisation.
Both versions of the near-critical OZ formula are thus compatible.
\end{Rem}

\begin{Rem}\label{rem:q=4}
We conclude this introduction by discussing the requirement that $q \in [1,4)$. 
When $q > 4$, it has been shown in~\cite{DCgagnebinHarelManolescuDiscontinuity} that the correlation length is uniformly bounded in the whole subcritical regime and that the phase transition is of first order. 
Thus, a OZ result valid uniformly for $p<p_c$ would also hold for the free measure at $p_c$. 
At the critical point, interfaces between the free and wired phases are expected to have random-walk behaviour, 
while primal clusters under the free measure, when conditioned to hit a far-away half-space, resemble the space between two random walks conditioned not to intersect --- the upcoming paper~\cite{dober2025discontinuoustransition2dpotts} aims to prove this. 
This behaviour is inconsistent with the OZ behaviour of Theorem~\ref{thm:main}: indeed by analogy with~\cite[Corollary 1.8]{DAl2024}, it is expected that in that setting, the two-point function in the free measure at the critical parameter behaves as
$
\phi^0_{p_c}[0 \lra \lfloor nx \rfloor] \asymp n^{-2}\exp(-\tfrac{n}{\xi_p(\vec x)} ).
$
This is thus incompatible with a uniform OZ-type behaviour in the subcritical regime. 

For $q=4$, the correlation length does diverge as $p \nearrow p_c$, as for $q <4$. 
However, contrary to the case $q < 4$, we expect that the ratio between the typical width of a long cluster and the correlation length diverges as $p \nearrow p_c$, possibly logarithmically. 
The core difference between the cases $q <4$ and $q = 4$ is that for $q  < 4$ the RSW theory crossing estimate holds up to the boundary, with adverse boundary conditions. 
This is ultimately reflected in Lemma~\ref{lem:L(p)expdecay}, which we do not believe holds for $q = 4$; see Section~\ref{sec:background} for details. 

That being said, our strategy may be used for any $q \geq 1$ and $p < p_c$ (and even any dimension $d \geq2$) to obtain OZ results which are uniform in $n$ and $\vec v$, 
but not in $p < p_c$.
Thus, the present paper offers an alternative approach to the theory of~\cite{campaninonioffevelenikozrandomcluster}.
\end{Rem}

\subsection{Overview of the proof}

Fix some~$\vec w \in \mathbb S^1$. We aim to describe the cluster of the origin, conditioned to hit a distant half-plane in the direction $\vec w$. 
The idea is to slice the plane with lines orthogonal to~$\vec w$ placed 
at regular intervals of length comparable to~$\xi_p(\vec w)$. 
We call these lines {\em hyperplanes}, as they should have co-dimension~$1$ in the more general~$d$-dimensional setting. 
For simplicity of exposition, imagine that~$\vec w = e_1$ is the horizontal unit vector. This is not limiting, as the symmetries of the lattice are never used. 

The cluster of~$0$ is then explored from left to right in a Markovian way: 
explore the connected component in the half-space left of the~$k^{\rm th}$ hyperplane and write~$X_k$ its highest point on the hyperplane\footnote{The choice of $X_k$ as highest on the hyperplane is arbitrary. Any choice determined by the explored cluster should have the same properties}. 
When no such intersection exists, write~$X_k = \dagger$ and say that~$X_k$ dies.
We will show that the process~$(X_k)_k$ has a certain renewal structure, even when conditioned on surviving for~$n$ steps.
As such, the process is decomposed into irreducible pieces and behaves essentially as a random walk, with all consequences following by standard tools. 

The ideas are similar to the previous works such as~\cite{campaninonioffevelenikozrandomcluster}, but we believe are rephrased in a novel way. 
The decomposition of the cluster in irreducible pieces, and its random walk interpretation, appeared already in these works. 
However, the present work does not use the ``diamond decomposition'' and explores the cluster in a dynamical way, that is less dependent on the point it ultimately reaches.

\section{Background on the random-cluster model}\label{sec:background}

In this section we provide some more elements from the standard theory of the random-cluster model, which readers familiar with the model may skip.
We also review some recent results of~\cite{DuminilCopinManolescuScalingRelations} about the near-critical regime of the two-dimensional random-cluster model.
Fix~$q \geq 1$ for the whole section. 

\subsection{Standard properties of the random-cluster model}

\paragraph{Monotonicity properties.}
The set of percolation configurations on~$\Z^2$ is equipped with the partial order defined by~$\om \leq \om'$ if for any~$e \in E(\Z^2)$,~$\om(e) \leq \om'(e)$.
An event~$A$ is said to be \emph{increasing} if for any two percolation configurations~$\om \leq \om'$,~$\om \in A \Rightarrow \om' \in A$.
Set~$q \geq 1$,~$p\in [0,1]$ and~$G$ a finite subgraph of~$\Z^2$.
The \emph{FKG inequality} asserts that for any increasing events~$A,B$, and any boundary condition~$\eta$ on~$G$,
\begin{equation}\tag{FKG}\label{eq:FKG}
\phi^\eta_{G,p}[A \cap B] \geq \phi^\eta_{G,p}[A]\phi^\eta_{G,p}[B].
\end{equation}

The random-cluster measure also possesses the following monotonicity property. 
If~$\xi \leq \xi'$  are boundary conditions on~$G$ (which is to say that $\xi'$ is a coarser partition than $\xi$), then for any increasing event~$A$,
\begin{equation}\tag{MON}\label{eq:mon}
\phi^\xi_{G,p}[A] \leq  \phi^{\xi'}_{G, p}[A].
\end{equation}
Both properties above extend to infinite-volume limits.

\paragraph{Domain Markov property.}
Let~$G$ be a finite subgraph of~$\Z^2$. Fix~$q \geq 1$ and~$p \in (0,1)$.
Let~$G' = (V',E')$ be a subgraph of~$G$. Then for any boundary condition~$\eta$ on~$G$, any percolation configuration~$\xi \in\{0,1\}^{E \setminus G'}$,
\begin{equation}\label{equ: DMP}\tag{DMP}
\phi^\eta_{G,p}[ \cdot_{G'} \vert~\om_{E\setminus E'} = \xi ] = \phi^{\xi^\eta}_{G',p}[\cdot],
\end{equation} 
where~$\xi^\eta$ is the boundary condition induced on the complement of~$G'$ by~$\xi$ together with the boundary condition~$\eta$.

\paragraph{Duality.}
Consider the dual graph~$(\Z^2)^*$ with vertex set~$V(\Z^2)+(1/2,1/2)$ and edge set~$\{ i + (1/2), j+(1/2) \},$ for~$i,j$ such that~$\{i,j\}\in E(\Z^2)$.
To any percolation configuration on~$\Z^2$ we associate its \emph{dual configuration}, defined on the graph~$(\Z^2)^*$ by setting~$\om^*(e)= 1- \om(e^*)$, where~$e^*$ is the unique edge of~$(\Z^2)^*$ that crosses~$e$.
It is classical that when~$\om \sim \phi^0_{p}$ then~$\om^* \sim \phi^1_{p^*}$, where~$p$ and~$p^*$ are linked by the following duality relation:
\begin{equation*}
pp^* = q(1-p)(1-p^*).
\end{equation*}
Note that the value~$p_{\mathrm{sd}} := \frac{\sqrt q}{1+\sqrt q}$ is the unique solution of~$p = p^*$, and coincides with the critical parameter~$p_c$ as first proved in~\cite{duminilbeffara}.

\subsection{Near-critical theory}\label{section: near-critical RSW theory}

The near-critical regime of the random-cluster model is the set of parameters~$n$ and~$p$ for which~$n$ is sufficiently small --- or~$p$ sufficiently close to~$p_c$ --- so that the system behaves almost critically at scale~$n$. It is expected, and is indeed the case for the two-dimensional random-cluster model, that the system behaves roughly critically in the near-critical regime, and sub- or super-critically outside of it. 

The rigorous understanding of the near-critical regime of percolation models in two dimensions started with Kesten's seminal work on Bernoulli percolation~\cite{Kestenscalingrelations}. Kesten's results were adapted to the random-cluster model on~$\mathbb Z^2$ with~$q \in [1,4]$ in~\cite{DuminilCopinManolescuScalingRelations}.
Here we will mention only the consequences of these works that are relevant to us. 

For~$q \geq1$ fixed and~$p < p_c$, define the {\em characteristic length} 
\begin{align}
L(p) = \inf \big\{ n \geq 0: \phi_p[{\rm Cross}(\Lambda_n)] \notin [\delta,1-\delta]\big\},
\end{align}
where~${\rm Cross}(\Lambda_n)$ is the event that~$\Lambda_n$ contains an open path between its left and right sides,
and~$\delta > 0$ is some small fixed quantity. 

It was proved in~\cite{DuminilCopinManolescuScalingRelations} that, for any~$\vec v \in \mathbb S^1$, 
\begin{align}\label{eq:L(p)xi}
	L(p) \asymp \xi_p(\vec v)
\end{align}
uniformly in~$p <p_c$, for~$q \in [1,4]$.
Moreover, $L(p) \to \infty$ as $p \nearrow p_c$. 

The equivalence \eqref{eq:L(p)xi} also holds for~$q >4$, but only because both~$L(p)$ and~$\xi_p(\vec v)$ are uniformly bounded away from~$0$ and~$\infty$.

Henceforth we will use~$L(p)$ rather than~$\xi_p(\vec v)$ to designate a quantity of the order of the correlation length, 
for instance when referring to the interspacing of the hyperplanes used to define the process~$(X_n)_n$. 
We do so to emphasise that its use is not related to the direction~$\vec v$ and is only important up to a bounded multiplicative constant. 

Fix~$\vec w \in \mathbb S^1$; for all practical purposes, think of~$\vec w$ as the unit vector in the horizontal direction.
Define the half-spaces 
\begin{align*}
	\calH_{\leq t}^{\vec w} = \{ x \in \R^2 : \langle x, \vec w \rangle \leq t\,  L(p) \} \quad\text{ and }\quad
	\calH_{\geq t}^{\vec w} = \{ x \in \R^2 : \langle x, \vec w \rangle \geq t\,  L(p) \}.
\end{align*}
Call~$\partial \calH_{\leq t}^{\vec w}= \partial \calH_{\geq t}^{\vec w} = \{ x \in \R^2 : \langle x,\vec w \rangle = t \, L(p) \}$ a hyperplane
and set~$\calH_{< t}^{\vec w}= \calH_{\leq t}^{\vec w} \setminus \partial \calH_{\leq t}^{\vec w}$.
In the following we will use an arbitrary integer approximation of these objects, which we do not detail. 
We will mostly work with~$\vec w$ fixed, and will omit it from the notation whenever no ambiguity is possible. 

A particular consequence of the discussion above is the following bound on the speed of exponential decay. 
The particular form of the statement below is due to its specific use in our proofs. 
A {\em potential past} is any connected set of edges $A \subset \calH_{\leq 0}$. Write $\partial_{\leq 0} A$ for all edges adjacent to $A$, contained in $\calH_{\leq 0}$. 

\begin{Lemma}\label{lem:L(p)expdecay}
	Fix~$q \in [1,4)$. There exists a constant~$c> 0$ such that the following holds. 
	For any~$p < p_c$, any~$n \geq L(p)$ and any potential past $A$, 
	\begin{align}\label{eq:L(p)expdecay}
		\phi_{\Lambda_n}^1\big[\La_{L(p)}\xlra{A^c} \partial \La_n \,\big|\, \omega \equiv 1 \text{ on $A$ and } \omega \equiv 0 \text{ on $\partial_{\leq 0} A$}\big] \les \e^{-c n/L(p)}. 
	\end{align}
\end{Lemma}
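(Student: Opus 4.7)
The plan is to reduce the conditioning to a standard random-cluster setup via the monotonicity and domain Markov properties, and then apply a renormalization argument at scale $L(p)$ based on the near-critical RSW theory for $q \in [1,4)$ recalled in Section~\ref{section: near-critical RSW theory}.

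First I drop the conditioning on closed edges. The event $E := \{\La_{L(p)} \xlra{A^c} \partial \La_n\}$ is increasing and depends only on edges in $A^c$, whereas $F_\partial := \{\omega \equiv 0 \text{ on } \partial_{\leq 0}A\}$ is a decreasing event on the same set of edges. Writing $\mu := \phi^1_{\Lambda_n}[\cdot \mid \omega \equiv 1 \text{ on } A]$, which is itself a random-cluster measure by \eqref{equ: DMP}, iterating \eqref{eq:mon} edge by edge for $e \in \partial_{\leq 0}A$ yields $\mu[E \mid F_\partial] \leq \mu[E]$. It therefore suffices to bound $\mu[E]$. A second application of \eqref{equ: DMP} identifies $\mu$, restricted to the edges of $E(\Lambda_n)\setminus A$, as the random-cluster measure on the graph obtained from $\Lambda_n$ by deleting the edges of $A$ and wiring all vertices incident to $A$ into a single super-vertex, together with the wired boundary condition on $\partial \Lambda_n$.

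I then tile $\Lambda_n$ by boxes of side length $kL(p)$ for a large fixed integer $k$. A box $B$ is called \emph{good} if its surrounding annulus (the concentric box of side $2kL(p)$ minus $B$) contains a dual open circuit around $B$; such a circuit blocks any primal path from $B$ to the outside of the annulus. By the uniform-in-BC crossing estimates at scale $L(p)$ available for $q\in[1,4)$ (see Section~\ref{section: near-critical RSW theory}), combined with the subcritical decay at scales above $L(p)$ (itself obtained by stacking such crossings), each box is good with probability at least $1-\epsilon_k$, uniformly in the configuration outside its annulus and in the past $A$, with $\epsilon_k \to 0$ as $k\to\infty$. Since goodness is a local event of the coarse-grained grid, the good-box indicators are finite-range dependent and, by a Liggett--Schonmann--Stacey-type comparison, dominate an independent Bernoulli site percolation of density $1-\tilde\epsilon_k$ with $\tilde\epsilon_k\to 0$. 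Fixing $k$ large enough that the resulting bad-box percolation is deeply subcritical, the event $E$ forces a chain of $\asymp n/(kL(p))$ consecutive bad boxes, whose probability is at most $\e^{-cn/L(p)}$ by standard Bernoulli estimates.

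The main obstacle is ensuring that the good-box probability stays uniformly close to $1$ regardless of the boundary conditions, which in our setting include the wiring at $V(A)$ and any exotic configurations uncovered during the exploration used in the renormalization. This is precisely the content of the RSW-up-to-boundary property valid for $q\in[1,4)$ discussed in Section~\ref{section: near-critical RSW theory}; it is what is expected to fail for $q=4$ (cf. Remark~\ref{rem:q=4}) and dictates our restriction on $q$ in the statement.
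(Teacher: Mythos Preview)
Your overall strategy --- coarse-grain at scale $kL(p)$ and show that bad boxes percolate subcritically --- is reasonable and close in spirit to the paper's dyadic scheme. The gap is that you assert the one hard step rather than prove it: that each box is good with probability $\geq 1-\epsilon_k$ \emph{uniformly in the past $A$}. This does not follow from ``uniform-in-BC RSW at scale $L(p)$ plus stacking'', because $A$ is not merely a boundary condition on $\partial(2B)$; it can occupy an arbitrary connected portion of the \emph{interior} of the annulus, where it is conditioned open. The blocking circuit then necessarily lives in $\omega^*\cup A^*$ and must be glued to $A^*$ along an interface of arbitrary shape, on which the measure sees wired conditions. Converting ``positive dual-crossing probability with adverse boundary conditions at scale $L(p)$'' (which the $q<4$ boundary RSW gives) into ``probability $\to 1$ at scale $kL(p)$, uniformly over all such $A$'' is precisely the missing work, and it is the whole content of the lemma.

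The paper carries this out by a recursion on quantities $u_k$, defined as infima over all potential pasts of certain dual connection probabilities in $\omega^*\cup A^*$ at scale $2^kL$, with $A^*$ comprising the duals of both $A$ and $\partial_{\leq 0}A$. The inductive step exploits the half-space constraint $A\subset\calH_{\leq 0}$: in $\calH_{>0}$ the unconditioned decay applies directly, while in $\calH_{\leq 0}$ the full conditioning ${\rm Past}(A)$ makes $\omega^*\cup A^*$ stochastically dominate the unconditioned dual in the half-space, so the same decay applies there as well; the base case uses~\cite{DumManTas21}. Your first reduction --- dropping $\omega\equiv 0$ on $\partial_{\leq 0}A$ via~\eqref{eq:FKG} --- is a valid inequality, but it discards exactly the deterministic closed edges that make this domination in $\calH_{\leq 0}$ go through cleanly.
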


The novelty in the statement above is the conditioning on the edges in $A$ and $\partial_{\leq 0}A$; 
indeed the result for the unconditioned measure is a simple consequence of \cite{DuminilCopinManolescuScalingRelations}. 
Note that the connection in \eqref{eq:L(p)expdecay} needs to occur \emph{outside} of $A$. 
Thus, it would be tempting to assume that the probability in \eqref{eq:L(p)expdecay}  decreases with~$A$, 
but the potential boundary conditions $A$ induces may actually increase the probability of the connection.  
Indeed, if we set $D = \La_n \setminus (A \cup \partial_{\leq 0} A)$, we work above in the measure $\phi_D^\xi$, 
where $\xi$ are the boundary conditions on $D$ that are wired the endpoints of edges of $A$ on $\partial \calH_{\leq 0}$, wired on $\partial \La_n$, but free on the endpoints of $\partial_{\leq 0} A$.

Finally, the assumption $q < 4$ is important for the above. For $q > 4$ the result is known to fail, and is expected to fail for $q =4$. 
In the latter case, if $A = \calH_{\leq 0}$, the cluster of the origin may  ``creep along'' $\partial \calH_{\leq 0}$ at a slightly smaller cost than if $A = \emptyset$.

We turn our attention to scales below the characteristic length, called the {\em critical window}. 
It was proved in~\cite[Thm. 2.1]{DuminilCopinManolescuScalingRelations}  that the RSW property still holds in this regime. 
Let~${\rm Circ}(r,R)$ be the event that~$\La_R$ contains an open circuit surrounding~$\La_r$. 
Write~${\rm Circ}^*(r,R)$ for the event that the dual configuration contains such a circuit, 
which for the primal model translates to~$\La_r$ not being connected to~$\partial \La_R$. 

\begin{Prop}[RSW in the critical window]\label{prop:RSW}
	Fix~$q \in [1,4]$. There exists~$c > 0$ such that for any~$p < p_c$ and~$n \leq L(p)$ and any boundary condition~$\xi$ on~$\La_{2n}$,
    \begin{equation}\label{eq:RSWnc}\tag{RSW}
        c\le  \phi_{\La_{2n},p}^\xi[{\rm Circ}(n,2n)] \le 1-c \quad \text{ and } 
        \quad         c\le  \phi_{\La_{2n},p}^\xi[{\rm Circ}^*(n,2n)] \le 1-c.
    \end{equation}
\end{Prop}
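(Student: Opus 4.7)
The plan is to reduce Proposition~\ref{prop:RSW} to the rectangle-crossing RSW theorem of~\cite{DuminilCopinManolescuScalingRelations}, which is stated in the discussion just above the proposition. That result asserts the existence of a constant~$c_0 > 0$ such that, for $q \in [1,4]$, any~$p < p_c$, any~$n \leq L(p)$, any rectangle~$R \subset \La_{2n}$ with both sides of order~$n$ and bounded aspect ratio, and any boundary condition~$\xi$ on~$\La_{2n}$, the probabilities of the primal and dual left-right crossings of~$R$ under~$\phi_{\La_{2n},p}^\xi$ all lie in~$[c_0, 1-c_0]$. With this input, everything else is the classical passage from crossings to circuits via~\eqref{eq:FKG} and duality.

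For the lower bound on~$\phi_{\La_{2n},p}^\xi[{\rm Circ}(n,2n)]$, I would cover the annulus~$\La_{2n} \setminus \La_n$ by four overlapping rectangles~$R_N, R_S, R_E, R_W$, one adjacent to each side of~$\La_{2n}$, of dimensions of order~$3n \times (n/2)$, placed so that a primal crossing of each rectangle in its long direction, taken together, deterministically contains a primal open circuit surrounding~$\La_n$. Applying~\eqref{eq:FKG} and the rectangle-crossing estimate to each of the four (increasing) crossing events yields
\begin{equ}
\phi_{\La_{2n},p}^\xi[{\rm Circ}(n,2n)] \geq \prod_{i \in \{N,S,E,W\}} \phi_{\La_{2n},p}^\xi[{\rm Cross}(R_i)] \geq c_0^4.
\end{equ}
The analogous construction in the dual model gives the lower bound on~$\phi_{\La_{2n},p}^\xi[{\rm Circ}^*(n,2n)]$.

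For the upper bound on~$\phi_{\La_{2n},p}^\xi[{\rm Circ}(n,2n)]$, note that the existence of a primal circuit in~$\La_{2n}\setminus\La_n$ surrounding~$\La_n$ precludes the existence of any dual crossing from~$\partial \La_n$ to~$\partial \La_{2n}$. Thus it suffices to show that the latter dual arm event has probability at least~$c_0$, which follows from the existence, with probability at least~$c_0$, of a dual crossing in the short direction of any one of the rectangles~$R_i$ above. The upper bound on~$\phi_{\La_{2n},p}^\xi[{\rm Circ}^*(n,2n)]$ is obtained by exchanging the roles of the primal and dual models. The uniformity in the boundary condition~$\xi$ is inherited directly from the uniformity in~\cite{DuminilCopinManolescuScalingRelations}; alternatively one may invoke~\eqref{eq:mon} to reduce to~$\xi \in \{0,1\}$ and argue the primal and dual statements under the two extreme conditions via duality.

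The only substantive obstacle is the rectangle-crossing RSW estimate in the critical window, which is precisely the hard input supplied by~\cite{DuminilCopinManolescuScalingRelations} and which I would not attempt to reprove here; the construction of the circuit from four crossings and the duality arguments above are standard once that estimate is available.
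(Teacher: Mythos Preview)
Your reduction is correct and is the standard route from rectangle crossings to annulus circuits via~\eqref{eq:FKG} and duality. Note, however, that the paper does not give its own proof of Proposition~\ref{prop:RSW}: it simply cites~\cite[Thm.~2.1]{DuminilCopinManolescuScalingRelations} as the source of the result. So there is nothing to compare against beyond the citation, and your sketch is exactly the kind of argument one would supply if the cited theorem were stated only for rectangle crossings.
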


Finally, the most significant contribution of~\cite{DuminilCopinManolescuScalingRelations} 
was to prove the stability of the arm event probabilities in the near-critical regime. 
\begin{Th}\label{thm: one arm} Fix~$q \in [1,4]$. Then
	\begin{equation}\label{eq:one_arm}
	\phi_{\Lambda_{2n},p}^\xi[0 \lra \partial \La_n] \asymp \pi_1(n),
	\end{equation}
	uniformly in~$p<p_c$,~$n \leq L(p)$ and any boundary conditions~$\xi$ on~$\partial \La_{2n}$
\end{Th}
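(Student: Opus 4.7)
The plan is to combine monotonicity with the near-critical RSW property \eqref{eq:RSWnc} to sandwich $\phi^\xi_{\Lambda_{2R}, p}[0 \lra \partial \Lambda_R]$ between multiples of $\pi_1(R)$. By \eqref{eq:mon} applied to the increasing event $\{0 \lra \partial \Lambda_R\}$,
\begin{equation*}
\phi^0_{\Lambda_{2R}, p}[0 \lra \partial \Lambda_R] \leq \phi^\xi_{\Lambda_{2R}, p}[0 \lra \partial \Lambda_R] \leq \phi^1_{\Lambda_{2R}, p}[0 \lra \partial \Lambda_R],
\end{equation*}
so it suffices to show that each extreme is $\asymp \pi_1(R)$.

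For the upper bound, stochastic monotonicity of the random-cluster measure in $p$ (valid for increasing events and $q \geq 1$) yields $\phi^1_{\Lambda_{2R}, p}[0 \lra \partial \Lambda_R] \leq \phi^1_{\Lambda_{2R}, p_c}[0 \lra \partial \Lambda_R]$, and standard boundary-condition decoupling at criticality via RSW~\cite{DCSidoraviciusTassionContinuity} gives $\phi^1_{\Lambda_{2R}, p_c}[0 \lra \partial \Lambda_R] \asymp \pi_1(R)$. Concretely, conditioning on the outermost dual circuit in an annulus surrounding $\Lambda_R$ but strictly inside $\Lambda_{2R}$ (which exists with probability uniformly bounded below by RSW at $p_c$) exposes a free boundary measure on its interior, which is stochastically dominated by $\phi^0_{p_c}$, yielding $\pi_1(R)$.

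The lower bound $\phi^0_{\Lambda_{2R}, p}[0 \lra \partial \Lambda_R] \gtrsim \pi_1(R)$ is the core difficulty, because monotonicity in $p$ now goes the wrong direction. One uses \eqref{eq:RSWnc} and \eqref{eq:FKG} to set up a near-critical quasi-multiplicative decomposition over dyadic annuli $\Lambda_{2^{k}} \setminus \Lambda_{2^{k-1}}$ with $2^k \leq R \leq L(p)$, together with the analogous decomposition at $p_c$ for $\pi_1(R)$. The main obstacle is to show that the annulus-arm factors at scale $r$ under $\phi_p$ and $\phi_{p_c}$ agree up to multiplicative constants, not merely that both lie in some interval $[c,1-c]$, since a bound of the latter form would accumulate polynomial-in-$R$ factors across the $\log R$ iterations. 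This scale-by-scale stability is a Kesten-type statement proved for the planar random-cluster model in~\cite{DuminilCopinManolescuScalingRelations} by exploiting the precise definition of $L(p)$ and RSW-based mixing at the correlation-length scale; combining it with the upper bound completes the proof.
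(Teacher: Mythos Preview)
The paper does not prove this theorem: it is stated in Section~\ref{section: near-critical RSW theory} as a result quoted from~\cite{DuminilCopinManolescuScalingRelations}, with no argument given. Your proposal is therefore not so much an alternative proof as an outline of what such a proof contains.

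Your upper bound is clean and self-contained: monotonicity in the boundary conditions, then monotonicity in $p$ to pass to $p_c$, then standard RSW decoupling at criticality to compare wired and free one-arm probabilities. This is correct and more explicit than anything in the present paper. For the lower bound you correctly identify the genuine obstacle --- that one cannot simply invoke monotonicity in $p$, and that a naive dyadic quasi-multiplicativity only gives agreement of each annulus factor up to a constant in $(0,1)$, which accumulates polynomial error --- and you correctly point to the Kesten-type stability of arm events in the critical window from~\cite{DuminilCopinManolescuScalingRelations} as the ingredient that closes this gap. In short, both the paper and your proposal ultimately defer to~\cite{DuminilCopinManolescuScalingRelations} for the substantive content; you have just unpacked the easy direction.
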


We finish the section with the proof of Lemma~\ref{lem:L(p)expdecay}. 
This is a relatively standard consequence of the exponential decay above~$L(p)$ \cite[Prop. 2.13]{DuminilCopinManolescuScalingRelations}, 
but the authors are unaware of a specific reference. We will omit some details in the proof below.

\begin{proof}[Proof of Lemma~\ref{lem:L(p)expdecay}]
	Fix~$q \in [1,4)$ and~$p<p_c$; write $L = L(p)$.  
	Constants below are independent of~$p$. Figure~\ref{fig:L(p)expdecay} contains helpful illustrations for the proof. 
	
	Before we start, we mention that it is an immediate consequence of  \cite[Prop. 2.13]{DuminilCopinManolescuScalingRelations} that, 
	for all $n\geq L$,
	\begin{align}\label{eq:no_D}
	    	\phi_{\La_{2n}}^{1}  [ \La_n \lra \La_{2n} \big] \leq e^{-c n/L},
	\end{align}
	for some universal constant $c > 0$. In particular, this implies \eqref{eq:L(p)expdecay} for $A = \emptyset$. 
	
	We turn to the general proof.	
	For $A$ a potential past 
	set ${\rm Past}(A)$ for the event that all edges of $A$ are open, but all those of $\partial_{\leq 0}A$ are closed. 
	Also, write $A^*$ for all edges dual to edges of $A$ or $\partial_{\leq 0}A$. 
	
	For $k\geq 1$, set\footnote{Recall that $\La_r(a,b)$ is the translate of $\La_r$ by the vector $(a,b) \in \bbR^2$.}
	\begin{align}
			u_k =\inf_A \phi_{\La_{2^k L}}^{1} \big[ \La_{2^{k-2} L}(-2^{k-1} L, 0) \xlra{\omega^* \cup A^*} \La_{2^{k-2} L}(2^{k-1} L, 0) \,\big|\, {\rm Past}(A)\big],
	\end{align}
	where the infimum is over all potential pasts $A$.
	We will start by proving that there exists some universal constant~$c_1 > 0$ such that, for any~$k\geq 1$,
	\begin{align}\label{eq:lower_bound_u_k}
		u_{k} \geq 1 - e^{-c_1 2^k}.
	\end{align}

	We do this by proving that $u_1$ is bounded away from $0$ uniformly in $p$ and that 
	\begin{align}\label{eq:u_k_rec}
		u_{k+1} \geq (1 - \e^{-c_0 2^k})^2 (1-(1-u_k)^2)
	\end{align}
	for all $k \geq 1$, where $c_0 > 0$ is a universal constant. 
	
	To prove  the bound on $u_1$, consider a potential past $A$. 
	If $A\cap \La_{2L}$ has a diameter smaller than $L$, there exists a rectangle of width $L/2$ and length $2L$ connecting the boxes $\La_{L/2}(-L, 0)$ to $\La_{L/2}(L, 0)$. 
	Using \eqref{eq:RSWnc}, we prove that the two boxes are connected in $\omega^*$ with uniformly positive probability. 
	If $A\cap \La_{2L}$ has a diameter larger than $L$, there exists tubes of length at most $4L$ and width at most $L/2$ that connect $\La_{L/2}(-L, 0)$ and $\La_{L/2}(L, 0)$ to $A$. Then\footnote{It is essential here that $q < 4$; this estimate is expected to fail at $q=4$.}, applying~\cite[Thm.7]{DumManTas21}, we find
	\begin{align}
	    	&\phi_{\La_{2 L}}^{1}  [ \La_{L/2}(-L, 0) \xlra{\omega^*} A^* \,\big|\, {\rm Past}(A)\big] \ges 1 
  \text{ and } 	
    		\phi_{ \La_{2L}}^{1}  [ \La_{L/2}(L, 0) \xlra{\omega^*} A^*\,\big|\, {\rm Past}(A) \big] \ges 1.
	\end{align}
	Applying \eqref{eq:FKG}, we conclude that, in all cases,
	\begin{align}		
		\phi_{ \La_{2L}}^{1} \big[ \La_{L/2}(-L, 0) \xlra{\omega^* \cup A^*} \La_{L/2}(L, 0)\,\big|\, {\rm Past}(A)\big] \ges 1.
	\end{align}
	
	We turn to the recurrence relation \eqref{eq:u_k_rec}. Fix $k \geq 1$ and $A$ a potential past. Write $N = 2^k L$. 
	Consider the two non-overlapping vertical translates of~$\La_{N}$ by~$(0,-N)$ and~$(0,N)$, 
	respectively, and the corresponding translates  $H_+$ and $H_-$ of the events in the definition of $u_k$:
	\begin{align}
		H_\pm = \Big\{ \La_{N/4}(-N/2, \pm N) \xlra{(\omega^* \cup A^*) \cap  \La_{N} ( 0,\pm N) } \La_{N/4}(N/2, \pm N)  \Big\}.
	\end{align}
	See Figure~\ref{fig:L(p)expdecay}, left diagram.
		
	Under~$\phi_{\La_{2N}}^{1}[.\,|\, {\rm Past}(A)]$, that the events $H_+$ and $H_-$ occur may be bounded by independent Bernoulli variables of parameter~$u_k$. 
	Thus, 
	\begin{align}
	\phi_{\La_{2N}}^{1}\big[ H_+ \cup H_- \,\big|\, {\rm Past}(A) \big] \geq 1-(1-u_k)^2.
	\end{align}
	
	\begin{figure}
	\begin{center}
	\includegraphics[width = 0.4\textwidth, page =1]{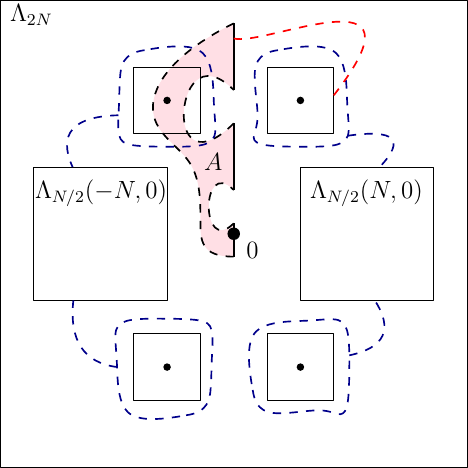}\qquad \qquad \qquad 
	\includegraphics[width = 0.4\textwidth, page =2]{L_p_expdecay3.pdf}	
	\caption{{\em Left:} in the proof of~\eqref{eq:u_k_rec} we consider two translates $H_+$ and $H_-$ of the event in $u_k$, 
	one in the upper half-plane, one in the lower one.
	Here the red path ensures that $H_+$ occurs. The blue paths (also part of $\omega^* \cup A^*$) produce the events $B_L$ and $B_R$. 
	When at least one of $H_+$ and $H_-$ occur, and both  $B_L$ and $B_R$, then the two horizontal translates of $\La_{N/2}$ connect to each-other. 
	{\em Right:} To produce a path separating $\La_{2^k L}$ from $\partial \La_{2^{k+1} L}$, it suffices for two vertical translates of the events in $u_{k-1}$ to occur, 
	and for the blue paths to occur in $\omega^* \cup A^*$. As for $B_L$ and $B_R$, the latter occur with exponentially high probability in $2^k$.}
	\label{fig:L(p)expdecay}	
	\end{center}
	\end{figure}

	Now, let $B_R$ be the event that, in $\omega^* \cap \La_{2N} \cap \calH_{> 0}$,
	there exists a circuit around $\La_{N/4 }(N/2,N)$, contained in $\La_{N/2 }(N/2,N)$, 
	connected to a circuit around $\La_{N/4 }(N/2,-N)$, contained in $\La_{N/2 }(N/2,-N)$.
	Then, by~\eqref{eq:FKG} and~\eqref{eq:no_D},
	\begin{align}
		\phi_{\La_{2N}}^{}\big[ B_R \,\big|\, {\rm Past}(A) \big] 
		\geq \phi_{\La_{2N} \cap \calH_{\geq 0}}^{1}\big[ B_R \big]  \geq 1 - e^{-c_0 2^k}.
	\end{align}
	for some universal positive constant $c_0 >0$. 
		
	Similarly, define $B_L$ be the event that, in $(\omega^* \cup A^*) \cap \La_{2N} \cap  \calH_{< 0}$,
	there exists a circuit surrounding $\La_{N/4 }(-N/2,N)$, contained in $\La_{N/2 }(-N/2,N)$, 
	connected to a circuit surrounding $\La_{N/4 }(-N/2,-N)$, contained in $\La_{N/2 }(-N/2,-N)$.
	Due to \eqref{eq:mon}, when sampled according to $\phi_{\La_{2N}}^1[. \,|\, {\rm Past}(A) ]$, $\omega^* \cup A^*$ in $\calH_{\leq 0}$ dominates the dual configuration sampled according to $\phi_{\La_{2N} \cap \calH_{\leq 0}}^{1}$.
	As such 
	\begin{align}
		\phi_{\La_{2N}}^{1}\big[ B_L \,\big|\, {\rm Past}(A) \big] \geq 	\phi_{\La_{2N} \cap\calH_{\leq 0}}^{1}\big[ B_L \big]  \geq 1 - e^{-c_0 2^k}.
	\end{align}
	
	Notice that, when $ B_L$ and $B_R$ and at least one of $H_+$ or $H_-$ occur, $\La_{N/2}(-N, 0)$ is connected to $\La_{N/2}(N, 0)$ in $\omega^* \cup A^*$. 
	By \eqref{eq:FKG} we conclude that
	\begin{align}
	 \phi_{\La_{2N}}^{1}  \big[ \La_{N/2}(-N, 0) \xlra{\omega^* \cup A^*} \La_{N/2}(N, 0)\,\big|\, {\rm Past}(A)\big] &\\
	\geq\phi_{\La_{2N}}^{1}\big[ (H_+ \cup H_- ) \cap B_L \cap B_R \,\big|\, {\rm Past}(A)\big] 
	&\geq (1 - e^{-c_0 2^k})^2 (1-(1-u_k)^2).
	\label{eq:AABB}
	\end{align}
	This concludes the proof of~\eqref{eq:u_k_rec}.
	Finally, the lower bound on $u_1$ and \eqref{eq:u_k_rec} readily imply \eqref{eq:lower_bound_u_k}.

	We turn to proving \eqref{eq:L(p)expdecay}. Let $k = \lfloor \log_2 (n/L(p))\rfloor -1$. It suffices to consider the cases $k\geq 2$;
	\eqref{eq:RSWnc} allow us to extend \eqref{eq:L(p)expdecay} to smaller values of $n$ by potentially altering the constant.  
	Then
	\begin{align}\label{eq:link_La_u_k}
		\phi_{\La_n }^1\big[\La_L\lra \partial \La_n 	\,\big|\, {\rm Past}(A)\big] 
		 \leq 1 - \phi_{\La_{2^{k+1}L} }^1\big[\text{$\omega^* \cup A^*$ has circuit around } \La_{2^{k}L}\,\big|\, {\rm Past}(A)\big].
	\end{align}
	The event in the right-hand side above may be constructed by two instances of the events in the definition of $u_{k-1}$ 
	(translated by $(0,3 \cdot 2^{k-1}L)$ and $(0,-3 \cdot 2^{k-1}L)$, respectively)
	and a series of connection events relating only to the left and right half-planes. 
	Rather than defining these precisely, we direct the reader to Figure~\ref{fig:L(p)expdecay}, right diagram, 
	and to the explanation behind \eqref{eq:AABB}, which prove that 
	\begin{align}
		\phi_{\La_{2^{k+1}L}}^1\big[\text{$\omega^* \cup A^*$  has circuit around } \La_{2^{k}L}\,\big|\, {\rm Past}(A)\big]
		\geq (1-e^{-c_2 2^{k}})^2 u_{k-1}^2. 
	\end{align}
	Combining the above with \eqref{eq:lower_bound_u_k} and inserting the result in \eqref{eq:link_La_u_k}
	we obtain  \eqref{eq:L(p)expdecay} with a constant $c >0$ that is independent of $p$ and $A$. 
\end{proof}

\section{Exploring the cluster as a Markov renewal process} 
 
The key idea of our approach is to show that the subcritical cluster of the origin admits what we call a ``killed renewal structure", and that the renewal structure persists even when conditioned to hit a far-away half-plane. We start by defining a class of processes which will have a killed random-walk like behaviour. 

\begin{Def}\label{def:KMRP}
	A stochastic process~$(X_t, Y_t )_{t\geq 1} \in ((\R \cup \{\dagger\})\times \{0,1\})^\N$ is called a 
	\emph{killed Markov renewal process} (KMRP in short) with respect to some filtration~$(\calF_t)_{t\in \N}$ if 
\begin{itemize}
\item It is adapted to~$(\calF_t)_{t\in \N}$;
\item If~$X_t = \dagger$, then~$Y_t = 0$ and~$X_{t+1} = \dagger$; 
\item If we set~$T_0 =  0$ and~$T_{k} = \inf\{ t > T_{k-1} : Y_t = 1\}$, then,
for any~$k\geq 1$, the process~$(X_{t+T_k} - X_{T_k}, Y_{t+T_k} )_{t \in \N}$ conditionally on~$\calF_{T_k}$ and~$X_{T_k}\neq \dagger$ has a fixed law~$\mathcal L$.
\end{itemize}
We say that the process has {\em exponential tails} if there exists~$c > 0 $ such that, for all~$k,j,n\geq 1$,
\begin{align}
	\mathbb P\big[X_{n + T_k} \neq \dagger \text{ and }  T_{k+1} - T_k  > n \,\big|\,\calF_{T_k}, \, T_k < \infty\big]& \leq \exp(-c \,n) \quad\qquad \text{ and}  \label{eq:KMRP_exp_tail1}\\	
	\mathbb P\big[X_{T_k+j} \neq \dagger \text{ and }|X_{T_k+j} - X_{T_k}| \geq n \,\big|\,\calF_{T_k}, \, T_k < \infty\big] &\leq \exp(-c \,n/j).  \label{eq:KMRP_exp_tail2}
\end{align}
Moreover, for the initial step, for any $j,n\geq 1$,
\begin{align}
		\mathbb P\big[X_n \neq \dagger \text{ and } T_{1} > n  \,\big|\, X_1 \neq \dagger\big]& \leq \e^{-c n}\quad\qquad \text{ and} \label{eq:KMRP_exp_tail_init1}\\	
	\mathbb P\big[X_{j} \neq \dagger \text{ and }|X_{j}| \geq n \,\big|\, X_1 \neq \dagger\big]& \leq \e^{-c n/j}. \label{eq:KMRP_exp_tail_init2}
\end{align}

We say that the model exhibits a {\em mass-gap} $c > 0$ if the above displays also hold under the additional conditioning $X_{T_k + N} \neq \dagger$ for any $N \geq 1$. 

Finally, we call the process \emph{aperiodic} if the greatest common divisor of the support of the increment $T_2 - T_1$ is one. 
\end{Def}

For such a process and~$k\geq 1$, call the laws of~$X_{T_{k+1}} - X_{T_k}$ and~$T_{k+1} - T_k$ conditionally on~$T_k \neq \infty$
the~$X$-step and the~$T$-step (or vertical and horizontal steps, respectively). 
Notice that these only depend on the law~$\mathcal L$ mentioned above, and therefore do not depend on~$k$.
Define the~$X$-step mean and variance as 
\begin{align*}
\mu_X := \mathbb E[X_{T_{k+1}} - X_{T_k}\,|\, T_{k+1} < \infty]\quad \text{and }\quad
\sigma_X :={\rm Var}[X_{T_{k+1}} - X_{T_k}\,|\, T_{k+1} < \infty]
\end{align*}
and the killing rate as 
\begin{align*}
\kappa := \mathbb P[T_{k+1}=\infty \,|\, T_{k} < \infty].
\end{align*}
We will now describe how such general processes are related to the cluster of~$0$ under different conditionings. \medskip

Fix~$\vec w \in \mathbb S^1$. 
For all practical purposes, think of~$\vec w$ as the unit vector in the horizontal direction, pointing to the right; 
this is only so that the vocabulary and illustrations below make sense, but has no bearing on the arguments. 
Recall the definitions of the half-spaces~$\calH_{\leq t}^{\vec w}$ and~$\calH_{\geq t}^{\vec w}$; the dependence on~$\vec w$ will be omitted whenever possible. 

Write~$\sfC$ for the cluster of~$0$ and~$\sfC_{\leq t}$ for the cluster of~$0$ in~$\omega \cap \calH_{\leq t}$. 
Note that this is contained in, but not always equal to~$\sfC\cap \calH_{\leq t}$. 
For any~$t\in\Z$, set
\begin{align}\label{eq:X_t_def}
	X_t := \max\big\{ h  \in \R: t L(p) \cdot \vec w + h \cdot \vec w^\perp \in \sfC_{\leq t}\big\},
\end{align}
where~$\vec w^\perp\in \bbS^1$ is a unit vector orthogonal to~$\vec w$ (the direction of~$\vec w^\perp$ is irrelevant for now; think of it as pointing upwards). 
If the set above is empty, which is to say that~$\sfC$ does not intersect~$\calH_{\geq t}$, set~$X_t = \dagger$. 
Thus~$X_t$ is the ``highest'' coordinate of the intersection of~$\sfC_{\leq t}$ with~$ \partial \calH_{\leq t}$.

The main objective of this section is the following result. 

\begin{Th}\label{thm:killed_renewal_structure} 
	Fix~$q \in [1,4)$,~$p < p_c$ and~$\vec w \in \mathbb S^1$; sample $\sfC$ according to $\phi_p$ and define $(X_t)_{t\geq 0}$ as above. 
	There exists an enlarged probability space supporting a random process~$Y_t$
	such that~$(X_t/L(p),Y_t)$ is an aperiodic killed renewal Markov process with a mass-gap, killing rate and $X$-step variance, all bounded away from~$0$ uniformly in~$p$ and~$\vec w$.
	Moreover, the killing rate is also bounded uniformly away from~$1$.
	
	Finally, the initial step survival rate satisfies
	\begin{align}\label{equ: first step survival rate}
		\phi_p[X_1 \neq \dagger] \asymp \phi_p[T_1 < \infty] \asymp \pi_1(L(p)),
	\end{align}
	uniformly in~$p$ and~$\vec w$.
\end{Th}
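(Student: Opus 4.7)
The plan is to construct the auxiliary process $Y_t$ via a specific geometric regeneration event in the slab just behind the hyperplane $\partial \calH_{\leq t}$, and then leverage the \eqref{equ: DMP}, the RSW estimates of Proposition~\ref{prop:RSW}, and the conditional exponential decay of Lemma~\ref{lem:L(p)expdecay} to verify each of the axioms in Definition~\ref{def:KMRP}. Concretely, I would define $\{Y_t = 1\}$ to be a local event in the slab $\calH_{[t-1,t]} := \calH_{\leq t} \setminus \calH_{< t-1}$ ensuring that $\sfC_{\leq t}$ meets $\partial \calH_{\leq t}$ only within a window of bounded size around $X_t$ and that this window is isolated from the remainder of $\sfC_{\leq t}$ by a closed dual path traversing the slab. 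On this event the DMP applied to the exploration up to time $t$ decouples the future: the law of $(\omega \cap \calH_{\geq t})$ restricted to clusters meeting the window depends on the past only through the translate of the window, and hence by translation invariance the incremented process has a fixed law $\mathcal L$, as required.

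The quantitative estimates then follow from standard near-critical tools. A uniform lower bound on the per-step regeneration probability is obtained by building with positive probability, via Proposition~\ref{prop:RSW}, a configuration in a box of side $\sim L(p)$ realizing the dual separation and the narrow passage; this gives simultaneously a killing rate bounded away from $1$ (the cluster survives one more slice with positive probability) and a killing rate bounded away from $0$ (applying Lemma~\ref{lem:L(p)expdecay} inside the next slab, the cluster fails to reach $\partial \calH_{\leq t+1}$ with uniformly positive probability). The vertical step $X_{T_{k+1}} - X_{T_k}$ has positive variance by a symmetry/RSW argument showing that both upward and downward deviations of size $\Theta(L(p))$ occur with uniformly positive probability. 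The exponential tail bounds \eqref{eq:KMRP_exp_tail1}--\eqref{eq:KMRP_exp_tail2} are a direct application of Lemma~\ref{lem:L(p)expdecay}: the event $\{T_{k+1} - T_k > n\}$ forces a connection between $\La_{L(p)}$ and $\partial \La_{nL(p)}$ in the presence of the conditioning "past" $A$ encoded in $\calF_{T_k}$, and similarly for large vertical steps. Aperiodicity comes from the fact that the regeneration event can occur at two and at three consecutive hyperplanes with positive probability.

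For the initial step, $\{X_1 \neq \dagger\} = \{0 \lra \calH_{\geq 1}\}$, whose probability is comparable to $\phi_p[0 \lra \partial \La_{L(p)}]$ by an elementary RSW comparison, and hence to $\pi_1(L(p))$ by Theorem~\ref{thm: one arm}. The equivalence $\phi_p[T_1 < \infty] \asymp \pi_1(L(p))$ then follows from the trivial inclusion $\{T_1 < \infty\} \subset \{X_1 \neq \dagger\}$ together with the uniform lower bound on the regeneration probability at time $1$ conditionally on $\{X_1 \neq \dagger\}$, obtained via RSW inside the first slab.

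The main obstacle, and the step requiring the most care, is the mass-gap property: the exponential tails must persist even when conditioning additionally on $X_{T_k + N} \neq \dagger$ for arbitrary $N \geq 1$. Naively, this conditioning biases the process toward atypical, slowly-decaying behaviour, so direct application of Lemma~\ref{lem:L(p)expdecay} does not suffice. I expect to handle this by a two-scale argument: decompose the event into a local part on a scale $\lesssim n$ and a long-range survival part on scale $N$, and show that the local event is essentially independent of the distant survival because any realisation of the local event can be extended to one surviving at scale $N$ at comparable cost — the estimator here being precisely the conditional exponential decay of Lemma~\ref{lem:L(p)expdecay}, which is stated with the sort of "past" conditioning needed. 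This is where the restriction $q \in [1,4)$ becomes essential, via the discussion following Lemma~\ref{lem:L(p)expdecay}.
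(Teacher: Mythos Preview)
Your proposal has two substantial gaps that the paper's proof addresses with considerable machinery.

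\textbf{First, a local event cannot produce an exact renewal.} You claim that once $\sfC_{\leq t}$ meets $\partial\calH_{\leq t}$ only in a bounded window, the law of the future depends on the past ``only through the translate of the window''. This is false for the random-cluster model: conditioning on $\sfC_{\leq t}$ fixes which points of the window are wired and which edges of $\partial_{\leq t}\sfC_{\leq t}$ are closed, and the law of $\sfC_{\geq t}$ genuinely depends on this microscopic pattern (not just on the window's location). The paper confronts this head-on (see Remark~\ref{rem:link}): it does \emph{not} claim a fixed law for $\sfC_{\geq t}$, but only for $\sfC_{\geq t+1/2}$, after skipping a half-slab ``link'' whose law is allowed to depend on the past. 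Even then, Proposition~\ref{prop:pre-renewal_is_renewal} gives only a domination $\phi[\sfC_{\geq t+1/2}=\chi+\cdots\mid\sfC_{\leq t}]\ge\eta\,\calL_{n-t}[\sfC_{\geq 1/2}=\chi]$, not equality, and the paper explicitly notes this is not enough to control the gaps between renewals. The actual renewal times are built using \emph{memory variables}: one quantifies via~\eqref{eq:exp_mem_tail} how many past slabs the future law depends on, dominates these by i.i.d.\ geometrics $M_k$, and sets $Y_t=1$ only when $t$ is a pre-renewal time \emph{and} all future memory variables $M_{k+j}$ satisfy $M_{k+j}\le j$. The variable $Y_t$ is therefore not $\calF_t$-measurable in any naive sense and certainly not a local event in one slab.

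\textbf{Second, your exponential-tail argument fails because the regeneration probability is not uniformly positive.} You argue that $\{T_{k+1}-T_k>n,\,X_{n+T_k}\neq\dagger\}$ forces a long connection, and Lemma~\ref{lem:L(p)expdecay} then gives exponential decay. But the event you need to bound is ``survive $n$ slabs without regenerating'', and your regeneration event requires $\sfC_{\leq s}$ to intersect $\partial\calH_{\leq s}$ in a single bounded window. If the number $N_s$ of active segments is large, the probability of collapsing to $N_{s+1}=1$ in one step is exponentially small in $N_s$, so there is no uniform lower bound on the one-step regeneration probability. The paper devotes Section~\ref{subsection: The number of active boxes is subcritical} to this: Proposition~\ref{prop:equation_martingale} shows $\phi[N_{t+C}\mid\sfC_{\leq t},X_n\neq\dagger]\le\mu N_t+K$ with $\mu<1$, a supermartingale-type drift that forces $N_t$ back to bounded values exponentially fast (Proposition~\ref{prop:pre-renewal_density}). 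This is the analogue of a subcritical branching process conditioned on survival, and it is essential; your sketch omits it entirely.

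Finally, your two-scale plan for the mass-gap is in the right spirit but vague; the paper's solution is to build the conditioning $\{X_n\neq\dagger\}$ into every intermediate estimate from the outset (Propositions~\ref{prop:cone_contained}, \ref{prop:pre-renewal_density}, \ref{prop:pre-renewal_is_renewal} are all stated under this conditioning), so that the mass-gap comes for free once the unconditioned exponential tails are established.
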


\begin{Rem}\label{rem:cone_contained}
	By the construction below, we will ensure that, for all renewal times $T_k$, the cluster $\sfC_{\geq T_k} := \sfC\setminus \sfC_{\leq T_k}$ after $T_k$ 
	is contained in a cone of fixed aperture and apex $(L(T_k-1), X_{T_k})$. 
	Since the inter-renewal times have exponential tails, this cone-containment property ensures that the full cluster remains close to the linear interpolation of the points $(Lt,X_t)_{t\geq 0}$.
\end{Rem}

Theorem~\ref{thm:killed_renewal_structure} suffices to prove Theorem~\ref{thm:main}, as well as a large number of other properties of subcritical clusters. 
We chose to formulate it using the concept of KMRP so as to separate the model-dependent part of the argument from the generic analysis of a class of processes with random-walk behaviour. The latter is contained in Section~\ref{sec:KMRP_consequences}. 
While the formalism is new in this context, we do not claim the theorem to be entirely original.
Indeed, the only formal novelty compared to~\cite{campaninonioffevelenikozrandomcluster} is the uniformity in~$p < p_c$.

We mentioned uniform lower bounds on~$\sigma_X$, but no upper bounds, as the uniform exponential tails induce uniform upper bounds on~$\sigma_X$,~$\mu_X$. % and~$\mu_T$. 
Note that we do not claim that~$\mu_X= 0$. This is the case when~$\vec w$ is aligned to the coordinate axis due to symmetry, but is not generally true. As such, under the conditioning on~$\{X_n \neq \dagger\}$, the cluster does not ``aim'' for the point~$n\vec w$, but rather for a point~$n\vec v$ for some~$\vec v$ depending on~$\vec w$ (with~$\langle \vec v, \vec w\rangle = 1$). The relation between~$\vec w$ and~$\vec v$ will yield the strict convexity of the Wulff shape and ultimately will prove how~$\vec w$ needs to be chosen to deduce Theorem~\ref{thm:main} for some direction~$\vec v$ -- see Section~\ref{sec:Wulff} for details.

The proof of Theorem~\ref{thm:killed_renewal_structure} relies on a geometric analysis of~$\sfC$ under the survival event~$\{X_N\neq \dagger\}$. The analysis is performed at a scale~$L(p)$, in particular showing that there exists a density of ``times'' at which the cluster is confined to boxes of size~$L(p)$. Those boxes play the role of ``pre-renewal times''; indeed, it will be shown that at each pre-renewal time, there is a   uniformly positive probability that the ``future'' cluster is sampled independently from its past, thus generating an actual renewal. The pre-renewal times play a similar role to the cone points in~\cite{campaninonioffevelenikozrandomcluster}.

 \begin{Rem}\label{rem:KMRP_pure_exp}
	Any KMRP with exponential tails and a mass-gap has a pure exponential rate of survival
\begin{align*}
	\mathbb P [X_n \neq \dagger] \asymp \exp(- n/\zeta),
\end{align*}
as will be proved in Section~\ref{sec:KMRP_consequences}. 
In our context,~$\zeta = \zeta(p,\vec w)$ depends on~$p$ and~$\vec w$ but is uniformly bounded away from~$0$ and~$\infty$. 
The constants in~$\asymp$ above are not uniform in~$p$; they will be shown to be of order~$\pi_1(L(p))$ and are due to the requirement of survival up to the first renewal time. 
Finally, the mass-gap states that the exponential rate of survival with no renewals is strictly smaller than~$\zeta$. 
\end{Rem}

\begin{Rem}\label{rem:KMRP_infinite_process}
Theorem~\ref{thm:killed_renewal_structure} provides a rigorous construction of ``the infinite cluster conditioned to survive in the direction~$\vec{w}$''. 
Indeed, it is a general property of  KMRPs with a mass-gap that the distribution of $T_{k+1} - T_k$ under the measure 
$\mathbb P[\, \cdot \,|\,\calF_{T_k}, \, T_k < \infty,\, X_n \neq \dagger ]$ converges as $n\to\infty$ to a measure fully supported on $\mathbb N^*$, with exponential tail. 
Remark~\ref{rem: size_bias} allows to identify this distribution as an exponential tilt of the distribution of $T_{k+1}-T_k$ conditioned on $T_1 < \infty$. 

The same type of convergence applies to the law of $(X_{T_k+1} - X_{T_k},\dots, X_{T_{k+1}} - X_{T_k})$, with the limit being supported on finite sequences. 
If we call $\calL_{\rm irred}$ this limiting law, we may construct an infinite sequence $(X_t)$ by concatenating i.i.d. samples from $\calL_{\rm irred}$.
An infinite cluster may also be constructed in a similar way, by sampling i.i.d. pieces of cluster $\sfD_k$ --- see the proof of Theorem~\ref{thm:killed_renewal_structure} for the exact definition of $\sfD_k$ --- according to a similar limiting law. 
Its statistics (for instance the mean value or variance of the $T$-step) are different from the ones of $\calL$ conditioned to survive for only one step, as $\calL_{\rm irred}$ is an exponential tilt of $\calL$. 
\end{Rem}

\begin{Rem}\label{rem:KMRP_condensation}
For KMRP with exponential tails but no mass-gap a \emph{condensation} phenomenon can occur, in which the process survives by making a very large step of linear order rather than many small steps of constant order. 
This was discussed in the context of long range Ising models in~\cite{nonanalyticitycorrelationlength, 2ptfunctionsaturation}.
\end{Rem}

The rest of the section is dedicated to proving Theorem~\ref{thm:killed_renewal_structure}. 
For the rest of the section,~$q \in [ 1,4)$ and~$p< p_c$ are fixed and we omit them from notation. 
A direction~$\vec w \in\bbS^1$ is also fixed and omitted from notation. For simplicity, we will assume $\vec w$ to be the horizontal unit vector~$(1,0)$.
Unless otherwise stated, constants and equivalences below will be uniform in~$p$ and~$\vec w$; they will be referred to as \emph{universal}.

\subsection{Cone-connections}

For~$\alpha > 0$, define  the cone in direction~$\vec w$ with aperture~$2\arctan \alpha$ as 
\begin{align}
	\calY_\alpha = \{ z \in \R^2: |\langle z,\vec w^\perp \rangle| \leq  \alpha \langle z,\vec w \rangle\}. 
\end{align}
Also set~$\partial \calY_\alpha$ to be the set of vertices in~$\calY_\alpha$ that have at least one neighbour outside~$\calY_\alpha$. 

Recall the notion of potential past $A$ and the event ${\rm Past}(A)$ that requires that the edges of $A$ are all open, while those of $\partial_{\leq 0}A$ are closed. 

\begin{Prop}[Connections in cones]\label{prop:cone_contained}
	There exist constants~$\alpha, c >0$ such that for  any~$k,n \geq 1$, and any potential past $A$,
		\begin{align}\label{eq:cone_contained}
		\phi \big[\La_{  L  } \overset{A^c}{\lra} (\calY_\alpha -(k  L  ,0))^c \,\big|\, \La_{  L  } \lra \calH_{\geq n} \text{ and } {\rm Past}(A)\big]
	\leq \e^{-c k}. 
	\end{align}
\end{Prop}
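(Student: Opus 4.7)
The plan is to decompose the event $E := \{\Lambda_L \lra (\calY_\alpha - (kL,0))^c\}$ via a union bound over ``exit boxes'' covering the boundary of the shifted cone, bound each contribution using Lemma~\ref{lem:L(p)expdecay}, and lower bound $\phi[F \mid {\rm Past}(A)]$, where $F := \{\Lambda_L \lra \calH_{\geq n}\}$, by an explicit RSW construction. Writing $L = L(p)$ and focusing by symmetry on the upper slanted boundary $\{(x, \alpha(x+k)L) : x \geq -kL\}$, I would cover it with boxes $B_j := \Lambda_L(jL, \alpha(j+k)L)$ for integer $j \geq -k$. If $E$ occurs, the cluster of $\Lambda_L$ reaches some $B_j$.

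For each $j$, the key estimate is
\[
	\phi\big[\{\Lambda_L \lra B_j\} \cap F \,\big|\, {\rm Past}(A)\big] \leq C \exp\big(-c\, d_j - c\,(n-j)_+\big),
\]
where $d_j := \sqrt{j^2 + \alpha^2(j+k)^2}$ is the distance (in units of $L$) from the origin to $B_j$. The factor $e^{-c d_j}$ follows from Lemma~\ref{lem:L(p)expdecay} applied directly to $\{\Lambda_L \lra B_j\}$, using that the lemma is uniform in the past $A$. The factor $e^{-c(n-j)_+}$ follows from exploring the cluster of $\Lambda_L$ up to the first time it reaches $B_j$: the explored set is contained in a neighborhood of $B_j$, and the Markov property \eqref{equ: DMP} together with monotonicity \eqref{eq:mon} bound the residual probability of extending to $\calH_{\geq n}$ by a wired-measure connection event, to which Lemma~\ref{lem:L(p)expdecay} again applies with distance at least $(n-j)_+ L$.

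For the denominator, combining \eqref{eq:FKG} with Proposition~\ref{prop:RSW} should give $\phi[F \mid {\rm Past}(A)] \geq c_0^n$ uniformly in $A$, by chaining $n$ RSW crossings of rectangles of scale $L$ placed in $\calH_{\geq 0}$ (spatially disjoint from $A$, so that the adverse boundary conditions induced by ${\rm Past}(A)$ do not affect the RSW bound). Writing $c_1 := -\log c_0$, the ratio of interest is then bounded by
\[
	C \sum_{j \geq -k} \exp\big(c_1 n - c\, d_j - c\,(n-j)_+\big).
\]
The main obstacle will be choosing $\alpha$ large enough to ensure that $c\, d_j + c\,(n-j)_+ \geq c_1 n + c'(k + |j|)$ for every $j \geq -k$, so that the sum converges to at most $C e^{-c' k}$. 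Geometrically, this amounts to checking that the detour through $B_j$ costs at least $\alpha k$ extra units of path length compared to the straight path of length $n$ to $\calH_{\geq n}$, plus a term growing in $|j|$ for summability; the verification is routine but requires splitting into the cases $-k \leq j < 0$, $0 \leq j \leq n$ and $j > n$. A minor technical point is that Lemma~\ref{lem:L(p)expdecay} is formally stated in a finite box with wired boundary, so the passage to connections between arbitrary boxes in the infinite-volume measure $\phi$ should be performed by monotonicity and by taking $n\to \infty$ in the ambient box.
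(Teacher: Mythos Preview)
Your argument has a genuine gap in the comparison of exponential rates, and no choice of $\alpha$ can repair it. The constant $c$ in Lemma~\ref{lem:L(p)expdecay} is a \emph{lower} bound on the true decay rate of connection probabilities (it gives $\phi[\cdot]\leq e^{-cn}$), while your RSW constant $c_1=-\log c_0$ is an \emph{upper} bound on that same rate (it gives $\phi[\cdot]\geq e^{-c_1 n}$). Hence necessarily $c\leq c_1$, and generically $c<c_1$ since neither lemma produces the sharp constant. Now test your claimed inequality $c\,d_j+c\,(n-j)_+\geq c_1 n+c'(k+|j|)$ at $j=0$: it reads $c\alpha k+cn\geq c_1 n+c'k$, i.e.\ $(c-c_1)n+(c\alpha-c')k\geq 0$, which fails for large $n$ as soon as $c<c_1$. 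Enlarging $\alpha$ only helps the $k$-term, never the $n$-term.

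The paper avoids this trap by never estimating the denominator $\phi[\La_L\lra\calH_{\geq n}\mid{\rm Past}(A)]$ by a crude RSW lower bound. Instead, for the half-space measure $\phi_{\calH_{\geq 0}}^1$, it uses a mixing lemma (Lemma~\ref{lem:cone_to_half-plane_mixing}) between a half-plane and a cone to factorise the connection through a box $\La_L(x)$ on the cone boundary as $\phi[\La_L\lra\La_L(x)]\cdot\phi[\La_L(x)\lra\calH_{\geq n}]$ up to universal constants. The second factor, after a vertical translation, is $\phi_{\calH_{\geq 0}}^1[\La_L(x_1,0)\lra\calH_{\geq n}]$; combining with an RSW lower bound $\phi[\La_L\lra\La_L(x_1,0)]\ges e^{-C_0 x_1/L}$, one reconstitutes exactly $\phi[\La_L\lra\calH_{\geq n}]$ on the right-hand side. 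This quantity then cancels in the conditional probability, leaving only the extra cost $e^{-c\|x\|/2L}$ of the vertical detour, for which the choice $\alpha=2C_0/c$ is precisely calibrated. The case of a general potential past $A$ is then bootstrapped from the half-space case by a further decomposition.

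A secondary issue: your justification of the factor $e^{-c(n-j)_+}$ by exploration is not correct. The explored cluster reaching $B_j$ is not ``contained in a neighbourhood of $B_j$''; it spans from $\La_L$ to $B_j$. More importantly, the connection $\La_L\lra\calH_{\geq n}$ may use an arm of the cluster entirely disjoint from the one reaching $B_j$, so stopping the exploration at $B_j$ does not isolate any residual cost. This is why the paper works with the event $\calB_x$ that $\La_L(x)$ is connected to both $\La_L$ and $\calH_{\geq n}$ by \emph{distinct} clusters in a cone, which genuinely decouples the two connections.
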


The above states that, when long connections in the direction~$\vec w$ occur, the whole connecting cluster is contained in a cone with high probability. 
Note that the aperture of the cone is fixed, and is not claimed to be arbitrarily small. Indeed, Proposition~\ref{prop:cone_contained} may ultimately be shown to hold for any~$\alpha > 0$, but only after proving the strict convexity of the Wulff shape. For now, it suffices to consider a fixed, large~$\alpha$.

Before proving Proposition~\ref{prop:cone_contained}, we formulate a useful mixing property. 
The proof is surprisingly intricate, but follows the argument of~\cite[Prop. 2.9]{DuminilCopinManolescuScalingRelations}.

\begin{Lemma}\label{lem:cone_to_half-plane_mixing} 
	For any~$\alpha>0$ and events~$A$ and~$B$ depending on the edges of~$\calH_{\leq 0}$ and~$\calY_\alpha \cap \calH_{\geq 1}$, respectively, 
	\begin{equation}\label{eq:cone_to_half-plane_mixing} 
		\phi[A\cap B] \asymp \phi[A]\phi[B],
	\end{equation} 
	where the constants in~$\asymp$ depend on~$\alpha$ but not on~$p$, $A$ or $B$. 
	
	Moreover, there exists $c >0$ such that, 
	for any $A$ as above and $B$ depending only on edges in $\calY_{\alpha} + (kL,0)$ for some $k\geq 1$
	\begin{equation}\label{eq:cone_to_half-plane_mixing2} 
		\Big|\frac{\phi[A\cap B]}{ \phi[A]\phi[B] }- 1\Big| \les \e^{-ck}.
	\end{equation} 
\end{Lemma}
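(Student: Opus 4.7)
The plan is to follow the strategy of~\cite[Prop.~2.9]{DuminilCopinManolescuScalingRelations}, combining \eqref{equ: DMP}, \eqref{eq:mon}, RSW at scale $L(p)$, and the exponential decay of Lemma~\ref{lem:L(p)expdecay} under arbitrary conditioning on the past. First I would apply \eqref{equ: DMP} and condition on the configuration~$\omega_-$ in~$\calH_{\leq 0}$. Writing $\psi_{\omega_-}$ for the conditional random-cluster measure on the edges outside~$\calH_{\leq 0}$, with boundary conditions on~$\partial\calH_{\leq 0}$ induced by~$\omega_-$, we have
\[
    \phi[A\cap B] = \mathbb{E}_\phi\bigl[\mathbf{1}_A \, \psi_{\omega_-}[B]\bigr] \quad \text{and} \quad \phi[B]=\mathbb{E}_\phi\bigl[\psi_{\omega_-}[B]\bigr].
\]
Both statements of the lemma thus reduce to showing that $\psi_{\omega_-}[B]$ is comparable to $\phi[B]$ uniformly in $\omega_-$ and $B$ (resp.\ coincides with $\phi[B]$ up to a relative error $\e^{-ck}$ in the shifted case).

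Next I would compare $\psi_{\omega_-}[B]$ with the extremal measures $\phi^0_{\calH_{\geq 0}}[B]$ and $\phi^1_{\calH_{\geq 0}}[B]$ via the standard monotone coupling $(\omega^0,\omega^1)$ supplied by \eqref{eq:mon}, so that $\omega^0\leq\omega^1$ almost surely. The crucial structural fact is that the discrepancy set $\calD=\{e:\omega^0(e)\ne\omega^1(e)\}$ is contained in the union of $\omega^1$-clusters that touch $\partial\calH_{\leq 0}$. Thus, whenever the $\omega^1$-cluster of $\partial\calH_{\leq 0}$ avoids $\calY_\alpha\cap\calH_{\geq 1}$ (resp.\ $\calY_\alpha+(kL(p),0)$), the two configurations agree on every edge of the support of $B$. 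Lemma~\ref{lem:L(p)expdecay}, applied with the potential past $A=\calH_{\leq 0}$, then bounds the complementary event by $\e^{-ck}$ (a universal constant in the case $k=1$), yielding the exponential factor in \eqref{eq:cone_to_half-plane_mixing2}.

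The main obstacle I anticipate is to promote the \emph{additive} comparison above into the \emph{multiplicative} bound \eqref{eq:cone_to_half-plane_mixing}, which is nontrivial when $B$ has arbitrarily small probability. My plan is a local screening argument: using \eqref{eq:RSWnc}, construct a dual open arc inside $\calY_\alpha^c\cap\calH_{[0,2]}$ separating a fixed neighborhood of the apex of $\calY_\alpha\cap\calH_{\geq 1}$ from $\partial\calH_{\leq 0}$, with probability uniformly bounded away from~$0$. On this screening event, \eqref{equ: DMP} makes the boundary conditions on $\partial\calH_{\leq 0}$ irrelevant for the part of the cone beyond the apex neighborhood, while a finite-energy-type surgery at scale~$L(p)$, derived from Lemma~\ref{lem:L(p)expdecay}, handles the apex itself. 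Chaining these ingredients through \eqref{eq:FKG} gives $\phi[A\cap B]\gtrsim\phi[A]\phi[B]$, and the matching upper bound follows by applying the same reasoning with $A$ and $B$ replaced by their complements. The delicate point will be to make the local screening compatible with the unboundedness of $\calY_\alpha\cap\calH_{\geq 1}$, which is precisely what the uniformity in the past conditioning in Lemma~\ref{lem:L(p)expdecay} is designed for.
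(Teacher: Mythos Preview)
Your overall setup --- conditioning on $\omega_-$, coupling the extremal half-space measures, and controlling the discrepancy via the cluster of $\partial\calH_{\leq 0}$ --- matches the paper's architecture. The paper also obtains the exponential bound \eqref{eq:cone_to_half-plane_mixing2} exactly as you describe, by noting that the additive error is at most the probability that the boundary cluster reaches the shifted cone, and this decays like $\e^{-ck}$ by Lemma~\ref{lem:L(p)expdecay}.

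The genuine gap is in your passage from additive to multiplicative, where two of your proposed steps fail.

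First, the ``complement trick'' does not yield the upper bound. From $\phi[A^c\cap B^c]\gtrsim\phi[A^c]\phi[B^c]$ you only get $\phi[B\,|\,A]\le 1-c+c\,\phi[B]$, which is useless when $\phi[B]$ is small; it gives no control of the ratio $\phi[A\cap B]/(\phi[A]\phi[B])$.

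Second, ``chaining through \eqref{eq:FKG}'' is not available: the whole point (and the paper stresses this explicitly) is that $B$ is \emph{not} assumed monotone, so FKG cannot be used to decouple the screening event from $B$. The paper's resolution is more delicate. In the coupling $(\omega,\omega')\sim(\phi^0_\Omega,\phi^\xi_\Omega)$ obtained by revealing the $\omega'$-cluster of $\partial\Omega$, the bad event $\{\omega\neq\omega'\text{ on }\calE\}\cap\{\omega'\in B\}$ depends \emph{only on $\omega'$}. Hence its probability equals $\phi^\xi_\Omega[\{\partial\Omega\lra\calE\}\cap B]$, which one bounds by $\phi^\xi_\Omega[B]\cdot\phi^\xi_\Omega[\partial\Omega\lra\calE\,|\,B]\le\eta\,\phi^\xi_\Omega[B]$, using that conditioning on $B$ (supported in $\calE$) can be pushed by \eqref{eq:mon} to wired on $\calE$. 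This yields $\phi^0_\Omega[B]\ge(1-\eta)\phi^\xi_\Omega[B]$ directly, a genuinely multiplicative one-sided bound, for \emph{every} $\Omega\supset\calD$. The opposite inequality is then obtained not by complements but by exploring the cluster of $\partial\calH_{\geq 0}$ under $\phi^\zeta_{\calH_{\geq 0}}$: with probability at least $1-\eta$ its complement $\Omega$ contains $\calD$, and on that event one invokes the just-proved bound (applied to $\Omega\subset\calH_{\geq 0}$) to compare $\phi^0_\Omega[B]$ with $\phi^0_{\calH_{\geq 0}}[B]$. Your screening idea is morally this second exploration, but without the first step you have no way to compare $\phi^0_{\Omega}[B]$ with a fixed reference, which is exactly where your sketch becomes vague (``finite-energy-type surgery \dots handles the apex'').
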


Note that the events $A$ and $B$ are not assumed to be increasing or decreasing. This is why the proof below is more complicated than one might expect. 

\begin{proof}
Fix~$\alpha > 0$.  We will focus on proving~\eqref{eq:cone_to_half-plane_mixing}; the proof of \eqref{eq:cone_to_half-plane_mixing2} is explained at the end. 

Write~$\calD = \calY_{2\alpha}\cap\calH_{\geq 1/2}$ and~$\calE =\calY_\alpha\cap\calH_{\geq 1}$. 
We first prove that there exists a constant~$\eta < 1$  depending on~$\alpha$ but not on~$p$, such that 
\begin{equation}\label{eq:cone_to_cone}
 \phi_{\calD\setminus \calE}^1\big[\partial\calD \leftrightarrow \calE \big] \leq \eta \quad \text{ and }\quad
  \phi_{\calH_{\geq 0}\setminus \calD}^1\big[\partial\calH_{\geq 0} \leftrightarrow \calD \big] \leq \eta.
\end{equation}
We focus on the second inequality and we write~$S$ for the set of vertices in~$\calH_{\geq 0}\setminus \calD$ that are at equal $L^\infty$-distance from~$\partial\calH_{\geq 0}$ and~$ \calD$ (up to an error of~$1$). 
Batch the points in~$S$ according to their distance to~$\calD$:
a batch~$S_k$ for~$k\geq 0$ is the set of points~$x\in S$ such that
\begin{align}
kL \leq {\rm dist}(x, \calD) < (k+1)L.
\end{align}
For the second event in~\eqref{eq:cone_to_cone} to occur,
at least one of the batches~$S_k$ needs to be connected to~$\calD$ --- see Figure~\ref{fig:cone_to_half-plane_mixing}. 
For $k$ large enough, we have 
\begin{align}
\phi_{\calH_{\geq 0}\setminus \calD}^1[S_k \lra \calD] \leq 2\phi_{\La_{c_1 kL}}^1[\La_{L/c_1}\lra \partial \La_{c_1kL}] 
\les \e^{- c_0k},
\end{align}
for some universal constant $c_1 >0$. The final inequality is due to~\eqref{eq:L(p)expdecay}. 

Summing over~$k \geq K_0$ we find
\begin{align}\label{eq:cone_to_cone2}
 \phi_{\calH_{\geq 0}\setminus \calD}^1\big[ \exists k\geq K_0 : \text{~$S_k \lra\calD$}]
 \leq C_0 \sum_{k\geq K_0 }e^{- c_0k}
 \leq \tfrac12,
\end{align}
provided that~$K_0$ is a large constant, which may depend on~$\alpha$, but not on~$L$. 
We may now use~\eqref{eq:RSWnc} to deduce the second inequality in~\eqref{eq:cone_to_cone}. 
Indeed, the separation scale between $\calD$ and $\partial \calH_{\geq 0}$ is by definition of order $L$, which is the proper scale to apply~\eqref{eq:RSWnc}.
The first inequality is proved in the same way. 

\begin{figure}
\begin{center}
\includegraphics[height = 4.4cm]{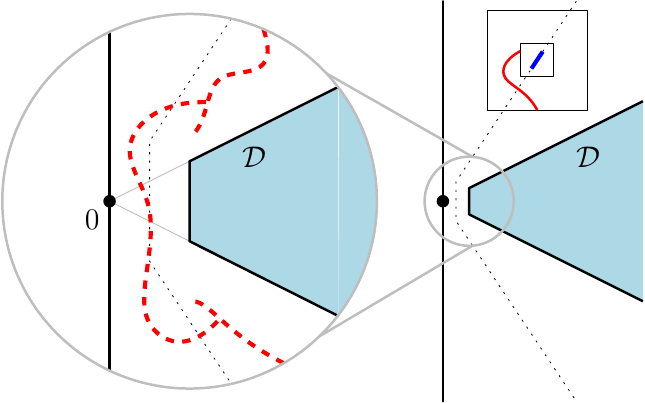}\hspace{2cm}
\includegraphics[height = 4.4cm]{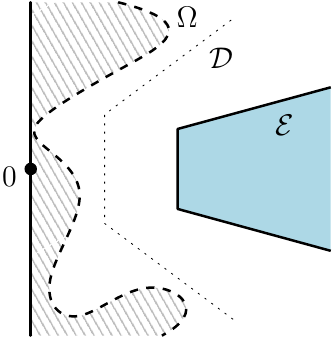}
\caption{{\em Left:} the dotted line represents the points at equal distance from~$\calD$ and~$\partial \calH_\geq 0$;
the solid black lines represent wired boundary conditions. The blue segment is the top half of a batch~$S_k$. 
For it to be connected to~$\calD$, the annulus surrounding it should be crossed, which occurs with an exponentially small probability in~$k$. For small values of~$k$, use~\eqref{eq:RSWnc} to complete the dual path separating~$\partial\calH_{\geq 0}$ form~$\calD$. 
\newline
{\em Right}: For the second part of the argument, consider a domain~$\Omega$ containing~$\calD$ and prove that the probability of~$B$ with arbitrary boundary conditions on~$\Omega$ is not much larger than that with free boundary conditions --- we use here the first estimate of~\eqref{eq:cone_to_cone}. 
One bound of~\eqref{eq:mixing_bc} follows directly. The opposite bound is proved by considering~$\Omega$ to be the complement of the cluster of~$\partial \calH_{\geq 0}$ and using the second bound of~\eqref{eq:cone_to_cone} to show that~$\calD \subset \Omega$ with positive probability. 
}
\label{fig:cone_to_half-plane_mixing}
\end{center}
\end{figure}

With~\eqref{eq:cone_to_cone} in hand, the rest of the proof follows that of~\cite[Prop. 2.9]{DuminilCopinManolescuScalingRelations}. 
Fix an event~$B$ depending on the edges in~$\calE$ and~$\zeta$ a boundary condition on~$\partial \calH_{\geq 0}$. We will show that
\begin{align}\label{eq:mixing_bc}
	 (1-\eta)^2 \phi_{\calH_{\geq 0}}^0[B] \leq \phi_{\calH_{\geq 0}}^\xi[B] \leq (1-\eta)^{-1} \phi_{\calH_{\geq 0}}^0[B].
\end{align}

We start off by proving that, for any set of edges~$\Omega$ with~$\calD \subset\Omega \subset \calH_{\geq 0}$
and any boundary conditions~$\xi$ on~$\partial \Omega$, 
\begin{align}\label{eq:OmegaB}
	\phi^{0}_\Omega[B] \geq (1 - \eta) \, \phi^\xi_\Omega[B].
\end{align}
See Figure~\ref{fig:cone_to_half-plane_mixing}, right-diagram, for an illustration. 
Indeed, consider the increasing coupling $\bfP$ between $\omega\sim\phi^{0}_\Omega$ and $\omega' \sim \phi^\xi_\Omega$
obtained by revealing first the cluster of $\partial \Omega$ in $\omega'$, then the rest of the configurations.
For details about this type of coupling see~\cite{DuminilCopinManolescuScalingRelations}; 
the important feature to keep in mind here is that $\omega  = \omega'$ for all edges not connected to $\partial \Omega$ in $\omega'$.  
Then 
\begin{align}
		\phi^\xi_\Omega[B]- \phi^{0}_\Omega[B]	
		&\leq \bfP\big[\omega \neq \omega' \text{ on $\calE$ and } \omega' \in B\big]\\
		&\leq \bfP\big[\partial \Omega \xlra{\omega'} \calE \text{ and } \omega' \in B\big]\\
		%&\leq \phi^\xi_\Omega \big[\partial \Omega \xlra{\omega'} \calE \text{ and } \omega' \in B\big]\\
		&\leq \phi^1_{\calD \setminus \calE} \big[\partial \calD \xlra{} \calE\big]\cdot\phi^\xi_\Omega[B]\\
		&\leq \eta\cdot\phi^\xi_\Omega[B], \label{eq:154mre}
\end{align}
where the third inequality is obtained by observing that 
\begin{align}
	\phi^\xi_{\Omega} \big[ \Omega \lra  \calE \,|\, B ] \leq \phi^1_{\calD\setminus \calE} \big[\partial \calD \lra \calE], 
\end{align}
and the last is due to the first bound in~\eqref{eq:cone_to_cone}. 
The above immediately implies~\eqref{eq:OmegaB}.
We insist here that $B$ is not assumed increasing and that the argument works because in the second line of~\eqref{eq:154mre}, 
the event depends only on $\omega'$. 
This is why the same reasoning may not be used to prove an opposite bound.

Applying~\eqref{eq:OmegaB} to~$\Omega = \calH_{\geq 0}$ we obtain the upper bound in~\eqref{eq:mixing_bc}. 
Another consequence of the above is that for all sets~$\Omega$ with $\calD \subset\Omega \subset \calH_{\geq 0}$, 
\begin{align}\label{eq:OmeOme}
	\phi^{0}_\Omega[B] \geq (1 - \eta) \, \phi^0_{\calH_\geq 0}[B],
\end{align}
since the probability on the right-hand side above is an average of quantities of the form~$\phi^{\xi}_\Omega[B]$.

We turn to the second bound in~\eqref{eq:mixing_bc}. 
Sample~$\omega$ from~$\phi_{\calH_{\geq 0}}^\zeta$ and let~$\sfC$ be the cluster of~$\partial \calH_{\geq 0}$.
Then
\begin{align}
	\phi^\zeta_{\calH_{\geq 0}}[B] \geq \sum_{\Omega} \phi^0_\Omega[B]\,\phi^\zeta_{\calH_{\geq 0}}[\calH_{\geq 0} \setminus \sfC = \Omega ],
\end{align}
where the sum is over all sets of edges~$\Omega$ containing~$\calD$. 
Using~\eqref{eq:OmeOme}, we conclude that 
\begin{align}
	\phi^\zeta_{\calH_{\geq 0}}[B] 
	\geq (1 - \eta)\, \phi^0_{\calH_\geq 0}[B]\,	\phi^\zeta_{\calH_{\geq 0}}[\calH_{\geq 0} \nxlra{} \calD ]
	\geq (1 - \eta)^2 \phi^0_{\calH_\geq 0}[B],
\end{align}
with the second inequality following from~\eqref{eq:cone_to_cone}.
This proves the second inequality in~\eqref{eq:mixing_bc}.

Finally, note that~\eqref{eq:mixing_bc} implies that, for any event~$A$ depending only on edges in~$\calH_{\leq 0}$, 
\begin{align}
	(1-\eta)^4 \leq \frac{\phi_{p}[B\,|\,A]}{\phi_{p}[B]} \leq (1-\eta)^{-2},
\end{align}
which yields the desired result.

To prove \eqref{eq:cone_to_half-plane_mixing2} follow the same lines, but observe that $\eta \les e^{-ck}$ for some $c > 0$, which implies the result. 
\end{proof}

We are finally ready for the proof of Proposition~\ref{prop:cone_contained}.

\begin{proof}[Proof of Proposition~\ref{prop:cone_contained}] 
	The proof is complicated by the fact that we consider a general potential past $A$ and work in $A^c$. 
	We start with the simpler\footnote{The important feature here is that $\phi_{\calH_{\geq 0}}^1$ is invariant under vertical translations; the same proof also applies to $\phi_{\calH_{\geq 0}}^0$ and the infinite volume measure $\phi$.
	When $\partial \calH_{\geq0}$ is not aligned to the axis of the lattice, the measure is not exactly translationally invariant, but dominates any vertical translate of $\phi_{\calH_{\geq -1}}^1$, which suffices.} case of $A = \calH_{\leq 0}$, which is to say that we work under $\phi_{\calH_{\geq 0}}^1$. We will then explain how to deduce the general case. 

	We start by explaining how to choose $\alpha$. 
	First notice that due to direct \eqref{eq:RSWnc} constructions, we have that for any $\ell \geq 0$
	\begin{align}\label{eq:in_line_conn_LB}
		\phi_{\calH_{\geq 0}}^1 \big[ \La_L \lra \La_L(\ell L ,0) \big]  
		&\ges \exp\big( - C_0 \ell  \big),
	\end{align}
	for some universal constant $C_0 > 0$. 
	Let~$c >0$ be the constant given by~\eqref{eq:L(p)expdecay} and set~$\alpha = 2C_0/c$.

	Fix $k$ and $n$. All constants below are allowed to depend on~$\alpha$, but not on~$p$,~$k$ or~$n$. 

	\medskip 
	
	\noindent {\bf Case 1: measure in the half-space.}
	Consider the cones 
	\begin{align}
	\calY_{\rm out} = \calY_\alpha -(k  L  ,0) \quad\text{ and }\quad	\calY_{\rm in} = \calY_{\alpha/2} -(k  L/2  ,0);
	\end{align}
	see Figure~\ref{fig:cone_contained}, left diagram. 
	To bound the probability in~\eqref{eq:cone_contained}, we will distinguish two scenarios:	
	\begin{itemize}
		\item[(1.a)] either~$\La_{  L  }$ is connected to~$\calH_{\geq n}$ inside~$\calY_{\rm in}$, but is also connected to~$\calY_{\rm out}^c$,
		\item[(1.b)] or~$\La_{  L  }$ is connected to~$\calH_{\geq n}$, but not inside in~$\calY_{\rm in}$.
	\end{itemize}
	Write~$S_a$ and~$S_b$ for the two events above. We will bound their probabilities under~$\phi_{\calH_{\geq 0}}^1[.| \La_{  L  } \lra \calH_{\geq n}]$ separately. 	
	\medskip 
		
	\noindent {\bf(1.a)} To bound the probability of~$S_a$ we condition on the configuration inside of~$\calY_{\rm in}$.  
	If~$S_a$ occurs, then there exists a point~$x \in \partial \calY_{\rm out}$ that is connected to~$\calY_{\rm in}$.
	Applying the same argument of summation by batches as used for~\eqref{eq:cone_to_cone} we conclude that
	\begin{align}\label{eq:S_a_omega_0}
		\phi_{\calH_{\geq 0}}^1\big[S_a \,\big|\, \omega = \omega_0 \text{ on~$\calY_{\rm in}$}\big] 
		\leq C \e^{-c k}
	\end{align}
	for universal constants $c,C>0$ and any $\omega_0$ on $\calY_{\rm in}$ containing a connection between $\La_L$ and~$\calH_{\geq n}$. 
	Now, 
	\begin{align*}
	\phi_{\calH_{\geq 0}}^1[S_a \,|\, \La_{  L  } \lra \calH_{\geq n}]
	= \sum_{\omega_0} \phi_{\calH_{\geq 0}}^1\big[S_a \,\big|\, \omega = \omega_0 \text{ on~$\calY_{\rm in}$}\big]\,  \phi_{\calH_{\geq 0}}^1[\omega =\omega_0  \text{ on~$\calY_{\rm in}$}\,|\, \La_{  L  } \lra \calH_{\geq n}],
	\end{align*}
	where the sum is over all $\omega_0$ containing a connection between $\La_L$ and~$\calH_{\geq n}$. 
	Injecting~\eqref{eq:S_a_omega_0} into the above we conclude that 
		\begin{align}\label{eq:S_a}
		\phi_{\calH_{\geq 0}}^1\big[S_a \,\big|\, \La_{  L  } \lra \calH_{\geq n}\big] 
		\leq C \e^{-c k}.
	\end{align}
		
	\noindent {\bf(1.b)}
	To bound the probability of~$S_b$ under the conditional measure, we will directly compare~$\phi_{\calH_{\geq 0}}^1[S_b]$ to~$\phi_{\calH_{\geq 0}}^1[ \La_{  L  } \lra \calH_{\geq n}]$.
	
	For $x \in L\cdot(\mathbb N \times \mathbb Z)$, write $\calY(x)$ for the horizontal translate of $\calY_{\alpha/2}$ whose boundary passes through $x$. 
	Let $\calB_x$ be the event that $\La_L(x)$ is connected to both~$\La_{  L  }$ and~$\calH_{\geq n}$ inside~$\calY(x)$,
	but that these two connections are produced by disjoint clusters of $\omega \cap \calY(x) \cap \La_{L}(x)^c$. 
	By considering the right-most translate of $\calY_{\alpha/2}$ that contains a connection between $\La_{  L  }$ and~$\calH_{\geq n}$, we conclude that if $S_b$ occurs, then there exists at least one $x \in L \cdot (\mathbb N \times \mathbb Z) \setminus \calY_{\rm in}$ for which $\calB_x$ occurs. 
	See Figure~\ref{fig:cone_contained}, middle diagram.
	
	\begin{figure}
	\begin{center}
	\hspace{-0.1\textwidth}	
	\includegraphics[width = 0.3\textwidth, page= 2]{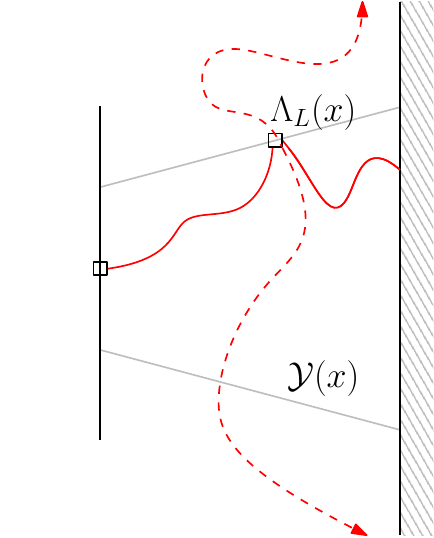}
	\includegraphics[width = 0.3\textwidth, page= 1]{cone_contained.pdf}\hspace{0.06\textwidth}	
	\includegraphics[width = 0.3\textwidth, page= 3]{cone_contained.pdf}
	\caption{{\em Left:} The occurence of $S_a$ conditionally on $\La_{  L  } \lra \calH_{\geq n}$ induces an ``unforced'' connection 
	between $\calY_{\rm in}$ and $\calY_{\rm out}^c$, which has an exponential cost in $k$. 
	{\em Middle:} To bound $S_b$, consider the right-most translate of $\calY_{\rm in}$ that contains a connection between $\La_L$ and $\calH_{\geq n}$.
	Its boundary intersects a ``pivotal'' box $\La_L(x)$, that is a box that is connected to $\La_L$ and $\calH_{\geq n}$ inside $\calY(x)$ by distinct clusters.
	{\em Right:} When working with an arbitrary potential past $A$,  
	the connection between $\La_L$ and $\calH_{\geq n}$ may use the left half-plane. 
	Nevertheless, there exists a vertical translate of $\La_L$ that is connected to $\calH_{\geq n}$ inside $\calH_{\geq 0}$.
	We will consider the closest such translate to $0$.}
	\label{fig:cone_contained}
	\end{center}
	\end{figure}
	
	We will bound the probability of each event $\calB_x$ with $x$ as above. 
	Fix one such $x =(x_1,x_2)$ and assume $x_2 \geq 0$; the case $x_2 \leq 0$ is identical. 
	Note that $\calB_x$ is measurable in terms of the configuration inside $\calY(x)$. 
	By the same argument as in the proof of Lemma~\ref{lem:cone_to_half-plane_mixing}, 
	conditionally on $\calB_x$ (or more generally on any event in $\calY(x)$), $\La_{L}(x)$ is connected to infinity by a dual path in $\calY(x)^c$ 
	with positive probability. 
	By performing an additional surgery inside $\La_{L}(x)$ using~\eqref{eq:RSWnc} and doing a similar argument in the lower half-plane (after exploring all the clusters of $\Lambda_L(x)$ in $\calY(x)$), 
	we conclude that
	\begin{align}
		\phi_{\calH_{\geq 0}}^1 \big[\calB_x\big] 
		&\les \phi_{\calH_{\geq 0}}^1 \big[\La_{L}(x) \text{ connected in $\calY(x)$ to $\La_L$ and to $\calH_{\geq n}$ by disjoint clusters}\big]\nonumber\\
		&\les \phi_{\calH_{\geq 0}}^1 \big[\La_{L}(x) \xlra{\calY(x)}\La_L\big] \, \phi_{\calH_{\geq 0}}^1 \big[\La_{L}(x) \xlra{\calY(x)} \calH_{\geq n}\big].
		\label{eq:calB_dec}
	\end{align}
	The second inequality is due to~\eqref{eq:mon} and requires conditioning on the cluster producing one of the connections
	and bounding the probability of the other connection. 
	
	We now bound each term in the right-hand side of~\eqref{eq:calB_dec}.
	First, by vertical translation,
	\begin{align}
		\phi_{\calH_{\geq 0}}^1 \big[\La_{L}(x) \xlra{\calY(x)} \calH_{\geq n}\big] 
		\leq \phi_{\calH_{\geq 0}}^1 \big[\La_{L}(x_1,0) \lra \calH_{\geq n}\big].
	\end{align}
	Moreover,  
	\begin{align}
		\phi_{\calH_{\geq 0}}^1 \big[  \La_{L}\xlra{\calY(x)} \La_{L}(x) \big] 
		&\les \phi_{\calH_{\geq 0}}^0 \big[  \La_{L}\xlra{\calY(x)} \La_{L}(x) \big] \\
		& \les  \, \exp\big( - c(x_1+x_2)/L  \big)  \\
		& \leq \, \exp\big( - C_0 x_1/L -  \tfrac{c}2(x_1+x_2)/L  \big)\\
		& \les \, \phi_{\calH_{\geq 0}}^1\big[ 0 \lra \La_L(x_1,0) \big]  \exp\big( - \tfrac{c}2\|x\|/L  \big),
	\end{align}
	where $c$ is a universal constant and $C_0$ is given by~\eqref{eq:in_line_conn_LB}.
	The first inequality comes from the mixing property~\eqref{eq:cone_to_half-plane_mixing}, 
	the second one from~\eqref{eq:L(p)xi}, and the third one is due to the choice of $\alpha$ and the fact that $x_2 \geq \alpha x_1$.
	
	Inserting the last two displays  into~\eqref{eq:calB_dec} we conclude that 
	\begin{align}
		\phi_{\calH_{\geq 0}}^1 [\calB_x]	
		&\les  \exp\big( -\tfrac{c}2\|x\|/L  \big) 
		 \phi_{\calH_{\geq 0}}^1 [  \La_{L} \lra \La_L(x_1,0)]
		\, \phi_{\calH_{\geq 0}}^1 [\La_{L} (x_1,0)\lra\calH_{\geq n}]
		\\
		&\les  \exp\big( - \tfrac{c}2\|x\|/L  \big) 
		\phi_{\calH_{\geq 0}}^1 [\La_{L} \lra\calH_{\geq n}],
		\label{eq:SlaHuv}
	\end{align}
	The second inequality is due to~\eqref{eq:FKG} and an application of~\eqref{eq:RSWnc} to glue the two connections. 
	Summing the above over all $x \in L \cdot (\mathbb N \times \mathbb Z) \setminus \calY_{\rm in}$, we find that 
	\begin{align}\label{eq:S_b1}
		\phi_{\calH_{\geq 0}}^1  [S_b ] 
		\les  \e^{-c k} \phi_{\calH_{\geq 0}}^1 [\La_{L} \lra\calH_{\geq n}],
	\end{align}
	for a potentially modified value of $c$.
	
	Combining points (a) and (b) we conclude that
	\begin{align}\label{eq:cone_contained1}
		\phi_{\calH_{\geq 0}}^1 \big[\La_{  L  } \lra (\calY_\alpha -(k  L  ,0))^c \,\big|\, \La_{  L  } \lra \calH_{\geq n} \big]
	\leq C \e^{-c k},
	\end{align}
	for universal constants $c,C>0$. 
	The constant $C$ may be removed by modifying $c$. 
	
	A particular consequence of the above and Lemma~\ref{lem:cone_to_half-plane_mixing} is that 
	\begin{align}
		\phi_{\calH_{\geq 0}}^1 [\La_{L} \lra\calH_{\geq n}] 
		&\les 	\phi_{\calH_{\geq 0}}^1 [\La_{L} \xlra{\calY_{\alpha}}\calH_{\geq n}] \\
		&\les		\phi_{\calH_{\geq 0}}^0 [\La_{L} \xlra{\calY_{\alpha}}\calH_{\geq n}] 
		\leq	\phi [\La_{L} \lra\calH_{\geq n} \,|\, {\rm Past}(A)],
		\label{eq:DvH}
	\end{align}
	for any potential past $A$. This will be useful for Case 2, below. 
	\medskip

	\noindent{\bf Case 2: general potential pasts $A$.} 
	We are now ready to  prove the general statement. 
	Fix a potential past $A$. Write $D = (A \cup \partial_{\leq 0}A)^c$ and $\xi$ for the boundary conditions induced on $D$ by the conditioning ${\rm Past}(A)$, so that 
	$\phi [\cdot|_D \,|\, {\rm Past}(A)]= \phi_D^\xi$.
	
	For a point $x \in  \{0\} \times L\mathbb Z$, write $\widetilde \sfC_{x}$ for the cluster of $\omega\cap \calH_{\geq 0}$ that connects $\La_L(x)$ to $\calH_{\geq n}$
	 --- if several such clusters exists, let $\widetilde \sfC_{x}$ denote their union and if no such cluster exists set  $\widetilde \sfC_{x} = \emptyset$.
	 	
	Let $\omega$ be a configuration in which $\La_L$ is connected  to $\calH_{\geq n}$.
	Write $X(\omega)$ be the point $x \in \{0\} \times L\mathbb Z$ of minimal norm for which $\widetilde \sfC_{x} \neq\emptyset$ --- when two such points exist, choose one arbitrarily. 
	
	The fact that $\omega$ contains a connection between $\La_L$ and $\calH_{\geq n}$ guarantees that 
	 $X(\omega)$ is well defined and that 
	 $\La_L$ is connected to $\La_{\|X(\omega)\| - L}^c$ in $\widetilde \sfC_{X(\omega)}^c$; see Figure~\ref{fig:cone_contained}, right diagram.
	 It may be noted that we do not claim $\widetilde \sfC_{X(\omega)}$ to be connected to $\La_L$; it may be that $\La_L$ is connected to $\calH_{\geq n}$ via a different cluster. 
	
	As a consequence, if $x$ is a possible realisation of $X(\omega)$ and $\sfC$ is a possible realisation of $\widetilde \sfC_{X(\omega)}$, we have that 
	\begin{align}
		\phi_{D}^\xi \big[ X(\omega) = x ,\, \widetilde \sfC_{x} = \sfC \text{ and } \La_L \lra \calH_{\geq n}\big]  
		&\leq 
		\phi_{D}^\xi \big[\widetilde \sfC_{x} = \sfC]  \,\phi_{D}^\xi \big[\La_L \xlra{\sfC^c} \partial \La_{\|x\| - L} \,\big|\,  \widetilde \sfC_{x} = \sfC  \big] \\
		&\leq 
		\phi_{D}^\xi \big[\widetilde \sfC_{x} = \sfC\big]  \,\phi_{D \cap \La_{\|x\| - L}}^\zeta \big[\La_L \lra \partial \La_{\|x\| - L}\big]\\
		&\les 
		\phi_{D}^\xi \big[\widetilde \sfC_{x} = \sfC\big]  \,e^{-c\|x\|}. \label{eq:usefulstep}
	\end{align}
	Above, $\zeta$ are the boundary conditions on $D \cap \La_{\|x\| - L}$ that are wired on $\partial \La_{\|x\| - L}$ and identical to $\xi$ elsewhere. 
	Indeed, the conditioning on $ \widetilde \sfC_{x} = \sfC$ produces a mixture of free and wired boundary conditions on $\sfC^c$, 
	together with the additional conditioning that no other connections between boxes $\La_L(x')$ and  $\calH_{\geq n}$ occur in $\calH_{\geq 0}$ 
	for $x' \in  \{0\} \times L\mathbb Z$ with $\|x'\| \leq \|x\|$. 
	The latter is a decreasing conditioning and the boundary conditions induced by the conditioning on $\sfC$ only have wired parts outside of $ \La_{\|x\| - L}$ --- this is due to the choice of $X(\omega)$. 
	The last inequality is due to~\eqref{eq:L(p)expdecay}. 
	The bound~\eqref{eq:usefulstep} will be used below.

	Fix some small constant $c_0 >0$ that depends only on $\alpha$; we will see below how to choose it. 
	If $\omega$ is a configuration containing connections between $\La_L$ and both $\calH_{\geq n}$ and the outside of $\calY_{\rm out} = \calY_{\alpha} -(k  L,0)$, 
	then at least one of the cases below occurs.
	\begin{itemize}
	\item[(2.a)] $\|X(\omega)\| \geq\alpha c_0  k L$;
	\item[(2.b)] $\|X(\omega)\| \leq \alpha c_0  k L$ but $\widetilde \sfC_{X(\omega)}$ is not contained in $\calY_{\alpha} -(2c_0 k  L  ,0)$;
	\item[(2.c)] $\widetilde \sfC_{X(\omega)}$ is contained in $\calY_{\alpha} -(2c_0 k  L  ,0)$, but $\La_L$ is connected to $\calY_{\rm out}^c$.
	\end{itemize} 
	We will bound the probability of each of the events above separately. \medskip 
	
	To bound the probability of (2.a), observe that, if $x$ is a potential realisation of  $X(\omega)$, summing~\eqref{eq:usefulstep} over $\sfC$ implies that
	\begin{align}
		\phi_{D}^\xi [X(\omega) = x \text{ and } \La_L \lra \calH_{\geq n}]
		&\les \phi_{D}^\xi [\La_L(x) \xlra{\calH_{\geq 0}} \calH_{\geq n}]\, \e^{-c\|x\|}\\
		&\leq \phi_{\calH_{\geq 0}}^1 [\La_L(x) \lra \calH_{\geq n}]\, \e^{-c\|x\|}.
	\end{align}
	Summing over $x$ with $\|x\| \geq\alpha c_0  k L$ we conclude that 
	\begin{align}\label{eq:2.a}
		\phi_{D}^\xi \big[\|X(\omega)\| \geq \alpha c_0 k L \text{ and } \La_L \lra \calH_{\geq n}\big]
		&\les \e^{-c \,c_0 k} \, \phi_{\calH_{\geq 0}}^1 [\La_L \lra \calH_{\geq n}] ,
	\end{align}
	for some universal constant $c>0$. 
	 \smallskip 
	
	The probability of (2.b) may be bounded in a similar way. 
	For $x$ a potential realisation of $X(\omega)$, summing~\eqref{eq:usefulstep} over realisations of $\widetilde \sfC_x$ as in (2.b), we find 
	\begin{align}
		\phi_{D}^\xi \big[  X(\omega) = x, \widetilde \sfC_{x}& \text{ intersects $(\calY_{\alpha} -(2 c_0 k  L  ,0))^c$ and }\La_L \lra \calH_{\geq n}\big]\\
		&\les \phi_{D}^\xi \big[\widetilde \sfC_{x} \text{ intersects  $(\calY_{\alpha} -(2c_0 k  L,0))^c$}\big]\, \e^{-c\|x\|}\\
		&\leq \phi_{\calH_{\geq 0}}^1 \big[\La_L \lra \calH_{\geq n} \text{ and } \La_L \lra (\calY_{\alpha} -(c_0 k  L  ,0))^c\big]\,e^{-c\|x\|}\\
		&\les \e^{-c \,c_0k + c\|x\|} \phi_{\calH_{\geq 0}}^1 [\La_L \lra \calH_{\geq n}],
	\end{align}
	with the second inequality due to~\eqref{eq:mon} --- note that the event considered is increasing --- and the last one given by~\eqref{eq:cone_contained} applied to $ \phi_{\calH_{\geq 0}}^1$, which we already proved. 
	Summing over the possible values of $X(\omega)$ we find that 
	\begin{align}
		\phi_{D}^\xi & \big[\|X(\omega)\| \leq \alpha c_0  k L,\,  \widetilde \sfC_{X(\omega)} \not\subset (\calY_{\alpha} -(2c_0kL  ,0)) \text{ and }\La_L \lra \calH_{\geq n} \big]\\
		&\les \e^{-c \,c_0 k}  \phi_{\calH_{\geq 0}}^1 [\La_L \lra \calH_{\geq n}].
		\label{eq:2.b}	\medskip 
	\end{align}
	
	Finally, to bound the probability of (2.c), 
	notice that for this event to occur, there need to exist a connection between $\La_{2\alpha c_0  k L}$ and  $\calY_{\rm out}^c$, 
	outside of $\widetilde \sfC_{X(\omega)}$.
	Using the same argument of conditioning on  $\widetilde \sfC_{X(\omega)}$ as in~\eqref{eq:usefulstep},
	\begin{align}
	\phi_{D}^\xi \big[\La_L \lra \calH_{\geq n},& \, \widetilde \sfC_{X(\omega)}\subset \calY_{\alpha} -(2c_0k  L   ,0) \text{ and } \La_L \lra \calY_{\rm out}^c\big]\\
	&\les \sum_x  \phi_{D}^\xi [\La_L(x) \xlra{\calH_{\geq 0}} \calH_{\geq n}]  
		\,\phi_{D \cap \La_{2\alpha c_0 k L}^c}^{\zeta} \big[\La_{2 \alpha c_0 k L} \lra \calY_{\rm out}^c\big]	\\
	&\les c_0 k \, \e^{Cc_0 k}
	 	\phi_{\calH_{\geq 0}}^1 [\La_L \lra \calH_{\geq n}]
	 	\,\phi_{D}^{\xi} \big[\La_{2 \alpha c_0 k L} \lra \calY_{\rm out}^c\big] \\
		& \les \e^{-c' k}  \phi_{\calH_{\geq 0}}^1 [\La_L \lra \calH_{\geq n}].
	\label{eq:2.c}
	\end{align}
	where the sum is over all $x \in \{0\}\times L \mathbb Z$ with $\|x\| \leq 2 \alpha c_0 k L$ 
	and $\zeta$ are the boundary conditions on $D \cap  \La_{2\alpha c_0 k L}^c$ that are 
	wired on $\partial  \La_{2\alpha c_0 k L}$ and identical to  $\xi$ everywhere else.
	The second inequality  uses~\eqref{eq:mon} and the finite energy property, with $C$ some universal constant. 
	The last inequality is due to~\eqref{eq:L(p)expdecay} and is valid for $c_0$ below some universal constant, and may be ensured by the choice of $c_0$; $c' >0$ is a universal constant.

	Summing~\eqref{eq:2.a},~\eqref{eq:2.b} and~\eqref{eq:2.c}, we obtain
	\begin{align}
	\phi_{D}^\xi \big[\La_{  L  } \lra \calY_{\rm out}^c \text{ and } \La_{  L  } \lra \calH_{\geq n} \big]
	&\les \e^{-c''k} \phi_{\calH_{\geq 0}}^1 [\La_L \lra \calH_{\geq n}] \\
	&\les \e^{-c''k} \phi_{D}^\xi [\La_L \lra \calH_{\geq n}].
	\end{align}
	for some universal constant $c''>0$. 
	 We used~\eqref{eq:DvH} in the last line.
	This proves~\eqref{eq:cone_contained}. 
\end{proof}

\begin{center}
	{\bf Henceforth $\alpha \geq 4$ is fixed to satisfy Proposition~\ref{prop:cone_contained} and we write $\calY := \calY_\alpha$.}
\end{center}

\subsection{The number of active segments is subcritical}\label{subsection: The number of active boxes is subcritical}

For integers~$t \geq 0, k \in \Z$, define~$\mathscr{L}_{t,k}  := \{tL\} \times [ kL, (k+1)L)$, 
so that the hyperplane~$\partial \calH_{\leq t}$ is the disjoint union of the line segments~$\mathscr{L}_{t,k}$ when~$k$ runs over~$\Z$.
Also, write~$x_{t,k} = (tL,(k+1/2)L)$ for the midpoint of~$\mathscr{L}_{t,k}$.

Recall that in Theorem~\ref{thm:killed_renewal_structure} we consider a configuration sampled according to  $\phi$ and $\sfC_{\leq t}$ is the cluster of $0$ in $\calH_{\leq t}$.
A line segment~$\mathscr{L}_{t,k}$ is said to be \emph{active at time}~$t$ if~${\sfC}_{\leq t} \cap \mathscr{L}_{t,k} \neq \emptyset$. 
Write~$N_t$ for the number of active segments at time~$t$. 
A time~$t$ for which~$N_t = 1$ will be called a {\em pre-renewal time}. 
The goal of this section is the following.

\begin{Prop}[Density of pre-renewal times]\label{prop:pre-renewal_density}
	There exists~$c> 0$ such that for any~$n,t,r \geq 1$,
	\begin{align}
	    \phi\big[N_{s} > 1, \,\,\forall s \in \{ t+1, \dots, t+r \} \,\big|\, \sfC_{\leq t},\, N_t = 1, \,X_n \neq \dagger\big] &\leq \exp(-cr)\quad  \text{ and} \label{eq:pre-renewal_density1}\\
	    \phi\big[N_{s} > 1, \,\,\forall s \in \{1, \dots, r \} \,\big|\, \,X_n \neq \dagger \big] &\leq \exp(-cr). \label{eq:pre-renewal_density2}
	\end{align}
\end{Prop}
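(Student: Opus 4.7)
The plan is to establish a uniform one-step ``thinning'' estimate: there exist universal constants $K_0 \in \N$ and $c_0 > 0$ such that, for any $s \geq 0$ with $s + K_0 \leq n$ and any potential realization $A$ of the explored cluster $\sfC_{\leq s}$ with $N_s(A) \geq 1$,
\begin{equation}\label{eq:key_thinning}
\phi\bigl[\exists \tau \in \{s+1, \dots, s+K_0\} : N_\tau = 1 \,\big|\, \sfC_{\leq s} = A,\, X_n \neq \dagger\bigr] \geq c_0,
\end{equation}
uniformly in $p<p_c$, $A$, $s$ and $n$. Granting \eqref{eq:key_thinning}, the bounds \eqref{eq:pre-renewal_density1} and \eqref{eq:pre-renewal_density2} follow by iterating in blocks of $K_0$ hyperplane steps: the Markov structure of the exploration (the ``future'' of $\sfC$ beyond time $s$ depends on the past only through the boundary data carried by $A$) reduces the probability that no pre-renewal occurs over $r$ steps to at most $(1-c_0)^{\lfloor r/K_0\rfloor}$, yielding the exponential tail.

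Two ingredients enter the proof of \eqref{eq:key_thinning}. First, applying Proposition~\ref{prop:cone_contained} to the ``future'' cluster $\sfC \setminus A$ (with $A$ playing the role of the potential past), we show that, with probability at least $1-\e^{-c K_0}$, the intersection of $\sfC$ with $\calH_{\geq s}$ is contained in the forward cone $\calY$ shifted to apex at the active segment of $A$ on $\partial\calH_{\leq s}$. This caps $N_\tau$ by $O(K_0)$ for $\tau \in [s,s+K_0]$ and confines the relevant geometry to a box of linear size $O(K_0 \cdot L)$. Second, inside this box, I would perform an explicit RSW surgery at scale $L$: using \eqref{eq:RSWnc}, one builds dual circuits disconnecting all but one of the $O(K_0)$ active branches at some intermediate time $\tau$, while preserving the connection of the chosen branch to $\calH_{\geq s+K_0}$ and beyond.

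The delicate step is to ensure that this surgery retains positive probability after incorporating the survival conditioning. By Bayes,
\begin{equation}
\phi[\,\cdot\,|\,\sfC_{\leq s}=A,\,X_n\neq\dagger] = \frac{\phi[\,\cdot\,\cap\,\{X_n\neq\dagger\}\,|\,\sfC_{\leq s}=A]}{\phi[X_n\neq\dagger\,|\,\sfC_{\leq s}=A]},
\end{equation}
so it suffices to control the ratio of the survival probability under the surgery to that without it. Here the mixing Lemma~\ref{lem:cone_to_half-plane_mixing} plays the key role: after revealing the local configuration near $\partial\calH_{\leq s+K_0}$, the far-field event $\{X_n\neq\dagger\}$ depends only on edges in $\calH_{\geq s+K_0}$, and its conditional probability is, up to universal constants, comparable across the relevant boundary traces with at least one active segment. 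Forcing the collapse to a single active segment therefore costs only a bounded multiplicative factor.

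The main obstacle is this final quantitative comparison: precluding the scenario in which the survival conditioning concentrates mass on configurations with several persistent branches. This is ruled out by combining Proposition~\ref{prop:cone_contained} and Lemma~\ref{lem:L(p)expdecay}: maintaining two disjoint long connections to $\calH_{\geq n}$ costs an additional exponential factor beyond a single one, so the dominant contribution to $\{X_n\neq\dagger\}$ comes from essentially single-branch configurations, and surgery-induced thinning does not destroy it. For the initial-step estimate \eqref{eq:pre-renewal_density2}, the same reasoning applies with $s=0$, the origin playing the role of the single active segment on $\partial\calH_{\leq 0}$, and the one-arm factor $\pi_1(L)$ absorbed into the normalization $\phi[X_1 \neq \dagger]$.
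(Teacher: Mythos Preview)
Your proposal has a genuine gap: the one-step thinning estimate \eqref{eq:key_thinning} cannot hold with $c_0$ uniform in the past $A$. The difficulty is the dependence on $N_s(A)$. If $N_s(A)=M$ is large, the active segments are spread over a region of linear size $\gtrsim M\,L$ on $\partial\calH_{\leq s}$, and your cone-containment argument does not apply as stated: Proposition~\ref{prop:cone_contained} controls the cluster emanating from a \emph{single} box $\La_L$, so at best it confines the descendants of the one ``winning'' branch (the one eventually connected to $\calH_{\geq n}$) to a cone of width $O(K_0)$. The remaining $M-1$ branches still contribute to $N_\tau$, and your claim that ``$N_\tau$ is capped by $O(K_0)$'' ignores them entirely. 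Forcing all $M-1$ of them to die within $K_0$ steps via RSW surgery costs roughly $c^{M}$, not a universal $c_0$; equivalently, the remark~\eqref{rem:going_down_to_1} in the paper only gives $\phi[N_{t+C}=1\mid \sfC_{\leq t}, X_n\neq\dagger]\geq \e^{-cN_t}$, which degenerates as $N_t\to\infty$. Since your iteration scheme must re-apply \eqref{eq:key_thinning} at time $s+K_0$ conditionally on whatever $\sfC_{\leq s+K_0}$ has been revealed on the event ``no pre-renewal so far'', and $N_{s+K_0}$ is unbounded on that event, the uniform bound is essential and unavailable.

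The paper's argument is structurally different and addresses precisely this point. It first proves the drift inequality of Proposition~\ref{prop:equation_martingale},
\[
\phi\bigl[N_{t+C}\,\big|\,\sfC_{\leq t},\,X_n\neq\dagger\bigr]\leq \mu N_t + K\qquad(\mu<1),
\]
which makes $\alpha^s N_{t+Cs}$ (with $\alpha=\tfrac{2}{1+\mu}>1$) a positive supermartingale up to the first return time $\tau$ to the level set $\{N\leq \ell\}$ with $\ell=\tfrac{2K}{1-\mu}$. Markov's inequality then gives exponential tails for $\tau$, i.e.\ $N_t$ comes back below the fixed threshold $\ell$ exponentially fast regardless of excursions. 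Only then is the ``thinning'' applied, from states with $N_t\leq\ell$, where the bound $\phi[N_{t+C}=1\mid\cdots]\geq \e^{-c\ell}$ is indeed uniform. In short, you are missing the supermartingale step that controls the \emph{size} of $N_t$ before attempting to collapse it to $1$; your RSW/mixing ingredients are the right ones for the second stage, but they do not by themselves yield a uniform one-block estimate.
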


This tells us that pre-renewal times arrive quickly, even when conditioning on a survival event far in the future. 
Proposition~\ref{prop:pre-renewal_density} will follow from the result below. 

\begin{Prop}\label{prop:equation_martingale}
	There exist constants $C \geq 1$,~$\mu < 1$ and~$K > 0$ such that for any~$t \geq 0$ and~$n\geq t+C$, 
	\begin{equation}\label{eq:equation_sur-martingale}
		\phi[N_{t+C} \,|\, \sfC_{\leq t},\, X_n \neq \dagger  ] \leq \mu N_t + K. 
	\end{equation}
\end{Prop}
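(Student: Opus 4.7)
My plan is to bound $N_{t+C}$ by $\sum_i |A_i|$, where $A_i$ denotes the set of active segments at time $t+C$ that are connected to $\mathscr{L}_{t, k_i}$ by an open path whose interior lies in $\calH_{(t, t+C)}$; a last-exit argument (each active segment at $t+C$ being assigned to its last exit from $\calH_{\leq t}$) shows that this decomposition is enough. Two key estimates apply to each $|A_i|$. First, by Proposition~\ref{prop:cone_contained} applied with its parameter $n$ replaced by $t+C+1$ and potential past given by $\sfC_{\leq t}$, on the event $\{|A_i| \geq 1\}$ the future cluster of $\mathscr{L}_{t, k_i}$ is contained in a cone of aperture $2\arctan\alpha$, so $|A_i| \leq 2\alpha C + O(1)$. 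Second, by Lemma~\ref{lem:L(p)expdecay} with the same past, $\phi[|A_i| \geq 1 \mid \sfC_{\leq t}] \leq e^{-c_1 C}$ for some universal $c_1 > 0$. Together these give the unconditional bound $\mathbb{E}[|A_i| \mid \sfC_{\leq t}] \lesssim C e^{-c_1 C}$.

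To pass to the conditional expectation on $\{X_n \neq \dagger\}$, I would decompose the survival event as $\bigcup_i F_i$, where $F_i$ is the analogue of $\{|A_i| \geq 1\}$ with $t+C$ replaced by $n$, i.e.\ the event that $\mathscr{L}_{t, k_i}$ is connected to $\calH_{\geq n}$ by an open path in $\omega \cap \calH_{> t}$ (again a last-exit argument gives $\{X_n \neq \dagger\} = \bigcup_i F_i$). For each $i$, writing
\begin{equation*}
\mathbb{E}\big[|A_i|\mathbf{1}_{X_n \neq \dagger}\mid \sfC_{\leq t}\big] \leq \mathbb{E}\big[|A_i|\mathbf{1}_{F_i}\mid \sfC_{\leq t}\big] + \mathbb{E}\big[|A_i|\mathbf{1}_{\bigcup_{j \neq i} F_j}\mid \sfC_{\leq t}\big],
\end{equation*}
the first term is at most $(2\alpha C + O(1))\,\phi[F_i\mid\sfC_{\leq t}]$ by cone containment. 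For the second, the mixing estimate of Lemma~\ref{lem:cone_to_half-plane_mixing} should produce approximate decoupling: the event $\{|A_i| \geq 1\}$ is localised in a cone above $\mathscr{L}_{t, k_i}$, while the dominant contribution to $\bigcup_{j \neq i} F_j$ comes from $j$'s whose cones avoid this region. Summing over $j$ and absorbing the finitely many nearby terms should give a bound of order $\mathbb{E}[|A_i|\mid \sfC_{\leq t}]\cdot \phi[X_n \neq \dagger \mid \sfC_{\leq t}]$. Summing over $i$ and dividing by $\phi[X_n \neq \dagger \mid \sfC_{\leq t}]$ yields
\begin{equation*}
\mathbb{E}[N_{t+C} \mid \sfC_{\leq t}, X_n \neq \dagger] \lesssim C\cdot \frac{\sum_i \phi[F_i \mid \sfC_{\leq t}]}{\phi[X_n \neq \dagger \mid \sfC_{\leq t}]} + C e^{-c_1 C}\, N_t.
\end{equation*}

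The main difficulty I foresee is controlling the ratio $\sum_i \phi[F_i]/\phi[\bigcup_i F_i]$ by an absolute constant: when the active segments at time $t$ are spread out the $F_i$'s are approximately disjoint by mixing, but when several sit inside a single clump they can be strongly positively correlated, making the ratio \emph{a priori} as large as $N_t$. The resolution should come from the following geometric observation: within a clump of width $sL$ the events $F_i$ are all realised by paths emerging from essentially the same wired super-vertex, so their joint occurrence contributes to a \emph{single} extending cluster, whose active segments at $t+C$ are constrained by cone containment (applied to the clump as a whole) to at most $s + 2\alpha C + O(1)$ --- a bound to be combined with the exponential decay $e^{-c_1 C}$ for non-extending clumps and then summed clump-by-clump rather than segment-by-segment. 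Once the ratio is bounded, choosing $C$ large enough so that $C_3 C\, e^{-c_1 C} < 1$, the estimate $\mathbb{E}[N_{t+C} \mid \sfC_{\leq t}, X_n \neq \dagger] \leq \mu N_t + K$ holds with $\mu := C_3 C\, e^{-c_1 C}$ and $K = O(C)$, as desired.
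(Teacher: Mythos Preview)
Your overall structure is sensible, and you correctly isolate the crux: controlling the ratio $\sum_i \phi[F_i]/\phi[\bigcup_i F_i]$ under the survival conditioning. However, the clump-by-clump resolution you sketch does not close the gap. Within an extending clump of width $s$, cone containment only gives a contribution of order $s + O(C)$ to $N_{t+C}$ (there is no $e^{-c_1 C}$ factor, since that clump is precisely the one carrying the connection to $\calH_{\geq n}$). Summing over extending clumps therefore produces a term of size $\sum_{\text{ext}} s_j$, which can be as large as $N_t$ itself --- not $\mu N_t$. So the decoupling route, as outlined, loses exactly the factor you need. A secondary issue: the event $\{|A_i|\geq 1\}$ is not measurable with respect to a fixed cone above $\mathscr{L}_{t,k_i}$, so Lemma~\ref{lem:cone_to_half-plane_mixing} does not apply to it directly; cone containment is probabilistic, not deterministic.

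The paper avoids the overcounting entirely by a different mechanism. Instead of summing over all $F_i$, it conditions on \emph{which} segment carries the connection: let $\mathbf j$ be the index of the top-most active segment connected to $\calH_{\geq n}$, and let $\mathbf K$ be the smallest shift such that its cluster sits inside $\calY + x_{t,\mathbf j} - (\mathbf K L,0)$. The key step is to show $\phi[\mathbf K > k \mid \mathbf j = j] \les e^{-c_0 k}$; this requires a separate ``gluing lemma'' giving $\phi[\mathbf j = j] \asymp s_{n-t}\cdot \phi[j \text{ top-most seed}]$, so that conditioning on $\mathbf j = j$ can be traded for the cone-containment estimate of Proposition~\ref{prop:cone_contained}. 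Once the cluster $\tilde\sfC$ of $\mathscr{L}_{t,\mathbf j}$ is explored, it contributes at most $2\alpha(\mathbf K + C)$ segments at time $t+C$, while --- and this is the point you are missing --- the exploration of $\tilde\sfC$ induces \emph{free} boundary conditions on $\tilde\sfC^c$, so every other active segment reverts to the unconditional estimate~\eqref{eq:fixC} and contributes at most $\mu$ in expectation. This single-cluster exploration trick replaces your attempted decoupling and is what makes the $\mu N_t + K$ bound go through.
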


The rest of this section is dedicated to the proofs of the two results above. 
In spirit, the results of that section are very close to limit theorems for the distribution of the number of children of a subcritical branching process conditioned to survive for a long time (see~\cite{athreya2004branching}). 
Our proof is a bit more complex due to the lack of control on the offspring distribution and most importantly due to the positive correlations between the different activated segments. 
As mentioned, Proposition~\ref{prop:equation_martingale} is the key step, and we start with it. First we explain how to choose the constant~$C$. 

\begin{Lemma}\label{lem:fixC}
    For any $\mu > 0$, there exists some constant~$C>0$ such that, for any potential past $A$ %$(A,\xi)$ with $D= \mathbb Z^2\setminus A$,
    \begin{align}\label{eq:fixC}
		\phi\big[\# j \text{ such that }\mathscr{L}_{0,0} \leftrightarrow \mathscr{L}_{C,j}\,\big|\, {\rm Past}(A)\big] \leq \mu.
	\end{align}
\end{Lemma}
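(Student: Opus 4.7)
My plan is a union bound followed by a geometric sum. First, by linearity of expectation,
\begin{equation*}
\phi\big[\#\{j : \mathscr{L}_{0,0} \lra \mathscr{L}_{C,j}\} \,\big|\, {\rm Past}(A)\big]
= \sum_{j \in \Z} \phi\big[\mathscr{L}_{0,0} \lra \mathscr{L}_{C,j} \,\big|\, {\rm Past}(A)\big].
\end{equation*}
It suffices to bound each term by $\lesssim e^{-c\max(C, |j|)}$ uniformly in the potential past $A$ and in $p<p_c$, and then to sum.

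For a single term, the bound is driven by the geometry. The segment $\mathscr{L}_{0,0}$ is contained in a box of side $L = L(p)$ centered near the origin, while $\mathscr{L}_{C,j}$ lies at horizontal displacement $CL$ and vertical offset of order $jL$. Any open path connecting the two must thus realize an $L^\infty$-displacement of at least $\max(C, |j|)\, L$ from the origin. I would apply Lemma~\ref{lem:L(p)expdecay} with $n\asymp \max(C,|j|)L$, which delivers exactly the bound
\begin{equation*}
\phi\big[\mathscr{L}_{0,0} \lra \mathscr{L}_{C,j} \,\big|\, {\rm Past}(A)\big] \lesssim e^{-c \max(C,|j|)}
\end{equation*}
with $c>0$ universal.

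Summing is then elementary:
\begin{equation*}
\sum_{j \in \Z} e^{-c\max(C,|j|)}
\le (2C+1)\, e^{-cC} + 2\sum_{j > C} e^{-cj}
\lesssim C\, e^{-cC},
\end{equation*}
which vanishes as $C\to\infty$. A choice of $C$ depending only on $\mu$ and on the universal constant $c$ (not on $A$ or $p$) completes the proof.

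\paragraph{Main obstacle.}
The delicate point is achieving uniformity in the potential past $A$ in the single-term bound. Since the edges of $A$ are forced open by ${\rm Past}(A)$, a connecting path $\mathscr{L}_{0,0} \lra \mathscr{L}_{C,j}$ is not confined to $A^c$ but may exploit the wiring provided by $A$. What saves the argument is the geometric separation: $A \subset \calH_{\le 0}$ whereas $\mathscr{L}_{C,j} \subset \calH_{\ge C}$, so the portion of the path that reaches into $\calH_{\ge C}$ must lie entirely in $A^c$ and realizes an $A^c$-crossing of horizontal extent at least $CL$. This is precisely what Lemma~\ref{lem:L(p)expdecay} is designed to control, and applying it (after an appropriate translation when needed to center around a box near $\mathscr{L}_{C,j}$) yields the desired single-term estimate uniformly in $A$. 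Packaging this cleanly, rather than naively summing over all possible exit points from $A$ on $\partial \calH_{\le 0}$, is the main technical step.
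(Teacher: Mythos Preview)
Your proof plan is correct and matches the paper's argument exactly: write the expectation as a sum over $j$, use the inclusion $\{\mathscr{L}_{0,0}\lra\mathscr{L}_{C,j}\}\subset\{\La_L\lra\partial\La_{L\max(C,|j|-1)}\}$, invoke Lemma~\ref{lem:L(p)expdecay} for the single-term bound $e^{-c\max(C,|j|-1)}$, and sum.

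One remark on your ``Main obstacle''. Your concern that the path might exploit the open edges of $A$ is legitimate given the lemma's wording, but notice that your proposed workaround --- translating to center at $\mathscr{L}_{C,j}$ --- only yields $e^{-cC}$ per term, not $e^{-c\max(C,|j|)}$, because after translation the $A^c$-crossing you isolate has horizontal extent $CL$ regardless of $j$. That bound is not summable over $j$, so it would not deliver the estimate you claimed in your Plan. The actual resolution is that the connection in Lemma~\ref{lem:fixC} is meant (and only ever used) in $A^c$: see the proof of Proposition~\ref{prop:equation_martingale}, where it is stated that ``all notions of connections and clusters below refer to the configuration in $\sfC_{\leq t}^c$ only''. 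Under that reading, an open path from $\mathscr{L}_{0,0}\subset\La_L$ to $\mathscr{L}_{C,j}$ is automatically an $A^c$-crossing of $\partial\La_{L\max(C,|j|-1)}$, and Lemma~\ref{lem:L(p)expdecay} applies directly without any translation. The paper's own proof proceeds exactly this way, without commenting on the point you raise.
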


\begin{proof}[Proof of Lemma~\ref{lem:fixC}]
Fix $\mu>0$,  $A$ a potential past and let $C$ be some positive integer. Then, for $j \in \mathbb Z$, 
for $\mathscr{L}_{0,0}$ to be connected to $\mathscr{L}_{C,j}$, $\La_L$ needs to be connected to $\partial \La_{L \cdot \max\{C,|j|-1\}}$. 
Thus, by~\eqref{eq:L(p)expdecay}, 
\begin{align}
\phi\big[\mathscr{L}_{0,0} \leftrightarrow \mathscr{L}_{C,j}\,\big|\, {\rm Past}(A)\big] \les \exp(-c \max\{C,|j|-1\}). 
\end{align}
Summing the above and taking $C$ large enough implies~\eqref{eq:fixC}.
\end{proof}

\begin{proof}[Proof of Proposition~\ref{prop:equation_martingale}]
Fix the parameters~$n$ and~$t$ as in the statement. 
Also fix some realisation of~$\sfC_{\leq t}$. We will always work conditionally on this realisation of the ``past cluster'', and write~$\phi_{\sfC_{\leq t}}$ for this conditional measure. It is the translate of a measure of the type $\phi[.\,|\, {\rm Past}(A)]$, and Lemma~\ref{lem:fixC} applies to it. 
All notions of connections and clusters below refer to the configuration in~$\sfC_{\leq t}^c$ only. 
All constants and equivalences below are uniform in the choices of~$p$,~$\vec w$,~$n$,~$t$ and~$\sfC_{\leq t}$. 

Fix~$C$ given by Lemma~\ref{lem:fixC} with the choice of an arbitrary $\mu < 1/2$. 
Write~$\bfj$ for the index of the top-most active box~$\mathscr{L}_{t, j}$ such that~$\mathscr{L}_{t, j} \cap \sfC_{\leq t}$ is connected to~$\calH_{\geq n}$. 
If no such connection exists, write~$\bfj = \emptyset$. 
Also write~$\bfK$ for the minimal value~$K \geq 0$ such that the cluster of~$\mathscr{L}_{t, \bfj}$ is contained in~$\calY + x_{t,\bfj} - (K  L  ,0)$ (recall that~$x_{t,j}$ denotes the mid-point of~$\mathscr{L}_{t,j}$).

Our goal is to bound
\begin{align}
	\phi_{\sfC_{\leq t}}[ N_{t+C}  \,|\, \bfj\neq \emptyset  ] 
	= \sum_j \phi_{\sfC_{\leq t}}[\bfj = j  \,|\, \bfj\neq \emptyset] \phi_{\sfC_{\leq t}}[N_{t+C}  \,|\, \bfj=j], 
\end{align}
and we will do so by bounding each of the terms~$\phi_{\sfC_{\leq t}}[N_{t+C}  \,|\, \bfj=j]$ individually. 

We first argue that
\begin{align}\label{eq:exp_dec_K}
	\phi_{\sfC_{\leq t}}[\bfK > k \,|\, \bfj = j] \leq \e^{-c_0 k}
\end{align}
for some constant~$c_0 > 0$ and all~$k\geq 3$. 

We say~$\mathscr{L}_{t, j}$ is a top-most seed if 
$\mathscr{L}_{t, j} \cap \sfC_{\leq t}$ is connected to~$\partial \La_{2  L  }(x_{t, j})$ but not to~$\mathscr{L}_{t,j + 1}$ in~$\sfC_{\leq t}^c \cap \La_{2  L  }(x_{t,j})$. We also write~$s_{t} = \phi_{\calH_{\geq 0}}^1[\mathscr{L}_{0,0} \lra \calH_{\geq t}]$.
We start off by estimating the probability of~$\bfj = j$. 

\begin{Lemma}[Gluing lemma]\label{lem:seed}
	Uniformly in~$n$, $t$ and~$r$ and past clusters $\sfC_{\leq t}$ satisfying $\phi[\sfC_{\leq t} \lra \calH_{n}] < \frac 1 2$,
	\begin{align}\label{eq:seed}
		\phi_{\sfC_{\leq t}}[\bfj = j] &\asymp s_{n-t} \,\phi_{\sfC_{\leq t}}[j \text{ is a top-most seed }].		
	\end{align}
\end{Lemma}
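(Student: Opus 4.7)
The plan is to prove matching upper and lower bounds for $\phi_{\sfC_{\leq t}}[\bfj = j]$, decomposing the event into a \emph{local seed} piece inside $\La_{2L}(x_{t,j})$ and a \emph{long-range extension} to $\calH_{\geq n}$.

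For the upper bound, I would first observe that $\{\bfj = j\}$ already implies that $j$ is a top-most seed. Indeed, by definition of $\bfj$, the clusters of $\mathscr{L}_{t,j} \cap \sfC_{\leq t}$ and $\mathscr{L}_{t,j+1} \cap \sfC_{\leq t}$ in $\sfC_{\leq t}^c$ must be distinct (otherwise $\bfj \geq j+1$), and this disconnection is inherited by their restrictions to $\La_{2L}(x_{t,j})$; moreover, the former cluster must reach $\partial \La_{2L}(x_{t,j})$ since it reaches the much farther $\calH_{\geq n}$. I would then condition on $\omega$ restricted to $\La_{2L}(x_{t,j})$ and use monotonicity~\eqref{eq:mon} to dominate the residual probability that the revealed boundary cluster extends to $\calH_{\geq n}$ by the analogous half-space probability with wired boundary conditions. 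A direct comparison with the definition of $s_{n-t}$, adjusted by an application of Proposition~\ref{prop:RSW} to normalise the starting segment, then yields $\phi_{\sfC_{\leq t}}[\bfj = j] \les s_{n-t} \, \phi_{\sfC_{\leq t}}[j \text{ is a top-most seed}]$.

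For the lower bound, suppose $j$ is a top-most seed, so the seed event provides a primal connection from $\mathscr{L}_{t,j}$ to a point on $\partial \La_{2L}(x_{t,j})$ lying strictly below $\mathscr{L}_{t,j+1}$. I would then extend this connection to $\calH_{\geq n}$ inside a horizontal translate of the cone $\calY$ with apex located below $\mathscr{L}_{t,j+1}$, using Proposition~\ref{prop:RSW} to construct the long path and Lemma~\ref{lem:cone_to_half-plane_mixing} to decouple this cone-supported extension from the seed (both spatially and with respect to the boundary conditions induced by $\sfC_{\leq t}$). By~\eqref{eq:FKG} and standard RSW gluing, the cone extension has probability $\asymp s_{n-t}$, and we arrange the construction so that it meets no other active segment $\mathscr{L}_{t,j'}$ with $j' > j$, thereby enforcing $\bfj = j$ on the constructed event.

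The main obstacle is the decoupling between the local seed event and the long-range connection: because we work on the full plane conditionally on a general past cluster $\sfC_{\leq t}$, the induced boundary conditions on $\sfC_{\leq t}^c$ are complicated and no direct independence is available. This is handled by combining the cone-to-half-plane mixing of Lemma~\ref{lem:cone_to_half-plane_mixing} with local RSW surgeries that absorb all dependence on $\sfC_{\leq t}$ into uniform multiplicative constants. The hypothesis $\phi[\sfC_{\leq t} \lra \calH_{n}] < \tfrac{1}{2}$ plays the role of a rarity assumption: it prevents the long connection from being too likely, ensuring that the mixing constants in the decoupling remain bounded and that spurious connections from other active segments to $\calH_{\geq n}$ can be absorbed into the main estimate rather than dominate it.
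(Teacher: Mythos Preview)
Your upper bound matches the paper's and is fine.

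The lower bound has a genuine gap. You write that arranging the cone extension to avoid the higher active segments $\mathscr{L}_{t,j'}$, $j'>j$, ``thereby enforc[es] $\bfj=j$''. It does not: $\bfj=j$ requires that \emph{no} higher active segment connects to $\calH_{\geq n}$, and such segments can perfectly well reach $\calH_{\geq n}$ along paths disjoint from your cone construction. What is needed is a dual blocker separating the set $\calB$ of higher active points from $\calH_{\geq n}$, not merely a primal path that avoids $\calB$. Moreover, once you have forced the primal connection (an increasing event), the conditional measure is tilted upward, so you cannot then appeal to~\eqref{eq:FKG} or to the hypothesis $\phi_{\sfC_{\leq t}}[\sfC_{\leq t}\lra\calH_n]<\tfrac12$ to bound the probability that $\calB$ stays disconnected.

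The paper resolves this by a rather delicate two-step interface exploration near $x_{t,j}$ whose sole purpose is to exhibit, with probability $\ges\phi_{\sfC_{\leq t}}[j\text{ top-most seed}]$, \emph{both} an accessible primal arc $\chi_1$ (connected to $\calA$) and an accessible dual arc $\chi_0$ (connected to the free arc between $x_+$ and $y_-$), each reachable by short tubes. Only then is the long connection built: one explores the full cluster $\sfC_\chi$ of $\chi_1$, which by Proposition~\ref{prop:cone_contained} reaches $\calH_{\geq n}$ without touching $\calB$ with probability $\ges s_{n-t}$; crucially, this exploration leaves \emph{free} boundary conditions on its complement, so the residual measure is dominated by $\phi_{\sfC_{\leq t}}$ and the hypothesis gives $\phi[\calB\nxlra{}\calH_{\geq n}\mid\sfC_\chi]\geq\tfrac12$ directly. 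Your proposal misses both the need for the dual arc and this ``explore-then-dominate'' mechanism, and your account of the role of the hypothesis (as controlling mixing constants) is not what is actually used.
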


We defer the proof of the lemma to later in the section and finish that of Proposition~\ref{prop:equation_martingale}. 
Due to the condition on $\sfC_{\leq t}$ appearing in Lemma~\ref{lem:seed}, we will distinguish two cases depending on $\sfC_{\leq t}$.
\medskip 

\noindent {\bf Case (A): $\sfC_{\leq t}$ is such that $\phi_{\sfC_{\leq t}}[\sfC_{\leq t} \lra \calH_{n}] < \frac 1 2$.}
Due to~\eqref{eq:seed} we have
\begin{align*}
	\phi_{\sfC_{\leq t}}[&\bfK > k \,|\, \bfj = j] \les \frac{\phi_{\sfC_{\leq t}}[\bfK \geq k \text{ and } \bfj = j]} { s_{n-t} \,\phi_{\sfC_{\leq t}}[j \text{ is top-most seed}]}\\[.3em]
	&\leq \frac{\phi_{\sfC_{\leq t}}\big[j \text{ is top-most seed},\, \La_{2  L  }(x_{t,j}) \xleftrightarrow{} \calH_{\geq n} \text{ and }\La_{2  L  }(x_{t,j}) \leftrightarrow (\calY + x_{t,j} - (k  L  ,0))^c \big]}
	{ s_{n-t} \,\phi_{\sfC_{\leq t}}[j \text{ is top-most seed}]}\\[.3em]
	&\leq \frac{\phi_{\sfC_{\leq t}}\big[\La_{2  L  }(x_{t,j}) \xleftrightarrow{} \calH_{\geq n} \text{ and }\La_{2  L  }(x_{t,j}) \leftrightarrow (\calY + x_{t,j} - (k  L  ,0))^c \,|\, \omega \equiv 1 \text{ on }\La_{2  L  }(x_{t,j}) \big]}{ s_{n-t} }\\
	&\les \e^{-c_0 k},
\end{align*}
The second inequality is simply an inclusion of events: for $\bfj = j$ to occur, $j$ needs to be a top-most seed and $\La_{2  L  }(x_{t,j})$ should be connected to $\calH_{\geq n}$. The third is a consequence of \eqref{eq:FKG} and the last one is due\footnote{The careful reader might note that Proposition~\ref{prop:cone_contained} does not rigorously apply in this setting, 
because of the additional conditioning on the configuration in $\La_{2  L  }(x_{t,j})$ which does not correspond to a potential past. Nevertheless, a direct application of \eqref{eq:RSWnc} proves that the bound \eqref{eq:cone_contained} still applies in this case, with a potential bounded multiplicative factor. 
}
to Proposition~\ref{prop:cone_contained}.
This proves~\eqref{eq:exp_dec_K}.

Now, for~$j$ fixed, by the almost sure finiteness of the cluster of 0, 
\begin{align}\label{eq:skN}
	\phi_{\sfC_{\leq t}}[ N_{t+C}  \,|\, \bfj =j  ] 
	=\sum_{k \geq 1} 	\phi_{\sfC_{\leq t}}[ N_{t+C}  \,|\, \bfK = k,\, \bfj = j  ] \phi_{\sfC_{\leq t}}[\bfK = k \,|\,\bfj=j]. 
\end{align}
To bound~$\phi_{\sfC_{\leq t}}[ N_{t+C}  \,|\, \bfK = k,\, \bfj = j ]$ explore first the connected component~$\tilde \sfC$ of~$\mathscr{L}_{t,j}$ 
and observe that it is connected to at most~$2\alpha (k + C)$ intervals~$\mathscr{L}_{t + C,\ell}$, since it is contained in~$\calY + x_{t,j} - (k  L  ,0)$. 

Conditionally on~$\tilde \sfC$, all other active intervals~$\mathscr{L}_{t,\ell}$ are connected in~$\tilde \sfC^c$ to a random number of intervals~$\mathscr{L}_{t + C,\ell'}$ with an average at most~$\mu$. Indeed, the conditioning on~$\tilde \sfC$ induces free boundary conditions on~$\tilde \sfC^c$, and by~\eqref{eq:mon} the estimate~\eqref{eq:fixC} applies.

We conclude that~$\phi_{\sfC_{\leq t}}[ N_{t+C}  \,|\, \bfK = k,\, \bfj = j  ]\leq \mu N_t + 2\alpha( k + C)$. Inserting this into~\eqref{eq:skN} and using~\eqref{eq:exp_dec_K} we conclude that 
\begin{align*}
	\phi_{\sfC_{\leq t}}[ N_{t+C}  \,|\, \bfj =j  ] 
	&\leq \mu N_t + 2\alpha   \sum_{k\geq0}   (k+C)   \phi_{\sfC_{\leq t}}[\bfK = k \,|\,\bfj=j]\\
	&\leq \mu N_t + 2\alpha C_0   \sum_{k\geq0} (k+C)  \e^{-c_0 k}
\end{align*}
which produces the desired result with~$K=2\alpha C_0   \sum_k (k+C)  \e^{-c_0 k} <\infty$. 
\medskip 

\noindent {\bf Case (B): $\sfC_{\leq t}$ is such that $\phi[\sfC_{\leq t} \lra \calH_{n}] \geq \frac 1 2$.}
Here we may simply write 
\begin{align*}
\phi_{\sfC_{\leq t}}[N_{t+C} \,|\,X_n \neq \dagger] \leq 2\phi_{\sfC_{\leq t}}[N_{t+C}] \leq 2\mu N_t, 
\end{align*}
with the last bound obtained from summing \eqref{eq:fixC} over all active segments. As $\mu$ was chosen strictly below $1/2$, Proposition~\ref{prop:equation_martingale} follows with $\mu' = 2\mu < 1$. 
\end{proof}

\begin{Rem}
	A minor modification of the proof above also implies the existence of a universal constant $c$ such that, for all $t$ and $n \geq t+C$ 
	\begin{align}\label{rem:going_down_to_1}
		\phi[N_{t+C} = 1 \,|\, \sfC_{\leq t},\, X_n \neq \dagger  ] \geq \e^{-c N_t}.
	\end{align}
	This observation shall be useful later on. 
\end{Rem}

Recall that we still need to prove the ``gluing estimate''~\eqref{eq:seed}. 
This is a relatively standard, but tedious use of the RSW property~\eqref{eq:RSWnc}.

\begin{proof}[Proof of Lemma~\ref{lem:seed}]	
	Fix all quantities as in the statement of the lemma and Proposition~\ref{prop:equation_martingale}.
	All constants appearing below will be independent of these quantities.
	
	The upper bound on~$\phi_{\sfC_{\leq t}}[\bfj = j]$ is immediate, since~$\{\bfj = j\}$ requires~$j$ to be a top-most seed and for~$\La_{4L}(x_{t,j})$ to be connected to~$\calH_{\geq n}$. 
	These two events depend on the inside of~$\La_{2L}(x_{t,j})$ and the outside~$\La_{4L}(x_{t,j})$, respectively. 
	The mixing property proved in~\eqref{eq:DvH} allows one to factorise the probabilities of these two events, up to a universally bounded multiplicative constant. 
	
	We now focus on the lower bound. 
	Fix~$R =   L   / 10$ and write~$x = x_{t,j}$ for the center of the interval~$\mathscr{L}_{t,j}$.
	Write~$\calA$ for the points of the cluster~$\sfC_{\leq t}$ on~$\mathscr{L}_{t,j}$. 
	Also write~$\calB$ for of the points of~$\sfC_{\leq t}$ on~$\mathscr{L}_{t,\ell}$ with~$\ell > j$; these all lie on~$\partial \calH_{\geq t}$, above~$\mathscr{L}_{t,\ell}$. 
	Let~$x_+$ and~$x_-$ be the top-most and bottom-most points of~$\calA$ respectively. 
	Write~$y_-$ for the bottom-most point of~$\calB$. 
	Note that~$x_+$ and~$y_-$ are linked by a dual path of the boundary of~$\sfC_{\leq t}$. 
	
	We start off by exploring certain interfaces starting on the boundary of $\sfC_{\leq t}$ up to certain stopping times; write ${\rm Exp}$ for the set of edges thus explored. 
	If {\em successful} (see below for a definition), the exploration will produce two {\em exposed} arcs, which is to say a primal arc $\chi_1$ connected to $\calA$ and a dual arc $\chi_0$ connected in the dual configuration the free path of $\partial \sfC_{\leq t}$ between $x_+$ and $y_-$, 
	such that both $\chi_1$ and $\chi_0$ may be linked  to the right side of  $\partial \La_{2L}(x)$  by disjoint tubes contained in  $ \La_{2L}(x) \setminus  (\sfC_{\leq t} \cup {\rm Exp})$,  of width~$R/4$ and length at most~$100 R$.
	See the left diagram of Figure~\ref{fig:gluing1} for an example. 
	
	Our procedure will be such that ${\rm Exp}$ is contained within distance $5R$ of $\mathscr{L}_{t,j}$ and 
	\begin{align}\label{eq:exploration_success}
		\phi_{\sfC_{\leq t}} [\text{exploration is successful}] \ges \phi_{\sfC_{\leq t}}[ j \text{ top-most seed}].
	\end{align}
	
	Once this is done, a simple argument based on~\eqref{eq:cone_contained},~\eqref{eq:DvH},~\eqref{eq:RSWnc} and the condition on $\sfC_{\leq t}$
	will show that 
	\begin{align}\label{eq:j=j_if_success}
		\phi_{\sfC_{\leq t}} [\bfj = j\,|\, \text{exploration is successful}] \ges s_{n-t}. 
	\end{align}
	
	The combination of \eqref{eq:exploration_success} and \eqref{eq:j=j_if_success} yields the desired lower bound. 
	Below we focus on proving these two equations. The former is the major difficulty of the proof, and is obtained via a tedious, but almost deterministic construction. 
	For illustration, note that, when $\calA$ contains a segment of length $R$ and  $x_+$ and $y_-$ are at a distance at least $R$ of each other, 
	no exploration is necessary, and \eqref{eq:exploration_success} is trivial. 
	\medskip 
	
	\noindent{\bf Explorations and proof of \eqref{eq:exploration_success}.}	
	The exploration is done in two steps, each defining one of $\chi_0$ and $\chi_1$. %We will work below under the measure $\phi_{\sfC_{\leq t}}[.\,|\, j \text{ top-most seed}].$ 
	\smallskip 
	
	\noindent {\em Step 1:} Write~$\partial_\infty \La_{2R}(x_+)$ for the arc of~$\partial \La_{2R}(x_+)$ separating~$x_+$ from~$\infty$ in~$\sfC_{\leq t}^c$. 
	This contains at least~$\partial \La_{2R}(x_+) \cap \calH_{\geq t}$. Split it into the top and bottom part, which lie above and below the point~$x_+ + (2R,0)$, respectively.

	\begin{figure}
	\begin{center}
	\includegraphics[height = 5cm]{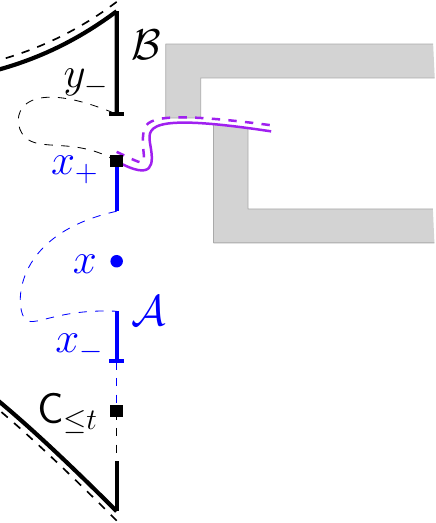}\qquad\qquad\qquad
		\includegraphics[height = 5cm]{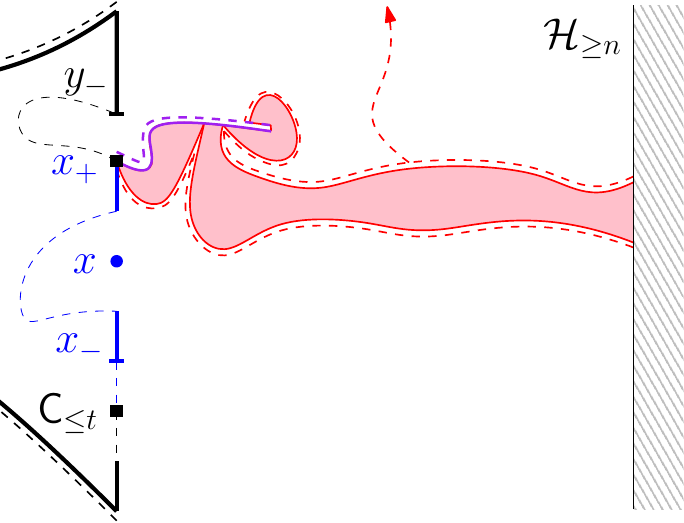}
	\caption{{\em Left:} An example of a successful exploration (purple), with the arcs $\chi_0$ and $\chi_1$ being the top and bottom sides of the interface and the tubes depicted in grey. The solid blue lines form $\calA$, while $\calB$ is the solid black segment above $y_-$. 
	An exploration is needed when either $\calA$ or the dual arc between $x_+$ and $y_-$ are inaccessible. 
	{\em Right:} For a successful exploration, $\chi_1$ may be connected to $\calH_{\geq n}$ by a primal cluster that does not intersect $\calB$ with probability at least $s_{n-t}$. After exploring the cluster $\sfC_{\chi}$ of $\chi_1$ (red) $\calB$ may be separated from $\calH_{\geq n}$ with uniformly positive probability due to our assumption on $\sfC_{\leq t}$. }
	\label{fig:gluing1}
	\end{center}
	\end{figure}

	Write~$\Gamma$ for the top boundary of the cluster of~$\calA$ in~$\sfC_{\leq t}^c$. 
	This is a path that will be indexed by~$[0,1]$ starting from the point~$x_+$. Since the cluster of~$\calA$ is a.s. finite,~$\Gamma$ eventually ends at~$x_-$.
	Below we will explore $\Gamma$ up to a convenient stopping time $\tau$. 
	For now set $\tau$ to be the first time $\Gamma$ hits~$\partial_\infty \La_{2R}(x_+)$; 
	the actual stopping time will be different, but we avoid introducing new notation.

	We distinguish several cases:
	\begin{itemize}
	\item[(1)]~$x_+$ and~$y_-$ are at a distance larger than~$2R$ of each-other. Then update $\tau$ to be equal to $0$; 
	\item[(2)]~$x_+$ and~$y_-$ are at a distance smaller than~$2R$ of each-other, $\Gamma_{[0,\tau]}$ does not connect $\calA$ to $\calB$ and
	$\Gamma_{\tau}$ is in the bottom part of $\partial_\infty \La_{2R}(x_+)$;
	\item[(3)]~$x_+$ and~$y_-$ are at a distance smaller than~$2R$ of each-other, $\Gamma_{[0,\tau]}$ does not connect $\calA$ to $\calB$ and
	$\Gamma_{\tau}$ is in the top part of $\partial_\infty \La_{2R}(x_+)$;
	\item[(4)] $\Gamma_{[0,\tau]}$ connects $\calA$ to $\calB$. 
	\end{itemize}
	In case (4), the exploration is considered unsuccessful. 
	Note however that, when~$j$ is a top-most seed,~$\Gamma$ exits~$\La_{  2L  }(x)$ without connecting~$\calA$ to~$\calB$, 
	and therefore excludes case~$(4)$. 
	Thus,  
	\begin{align}
	\phi_{\sfC_{\leq t}} [\text{cases (1), (2) or (3)}] \geq \phi_{\sfC_{\leq t}}[ j \text{ top-most seed}].
	\end{align}
	
	Next we define $\chi_0$ or $\chi_1$ for each of the cases (1)-(3) differently. 
	In case (1), set~$\chi_0$ to be the part of the boundary of~$\sfC_{\leq t}$ between~$x_+$ and~$y_-$. Also write~$z = x_+$ and~$\tilde \sfC = \sfC_{\leq t}$. 
	
	In the second and third case set~$z = \Gamma_{\tau}$ and $\tilde \sfC = \sfC_{\leq t} \cup \Gamma([0,\tau])$. 
	In the second case, write~$\chi_0$ for the top part of~$\Gamma([0,\tau])$; 
	while in the third case we denote by~$\chi_1$ the bottom part of~$\Gamma([0,\tau])$.

	Notice that in the first two cases,~$\chi_0$ is a dual arc accessible in~$\tilde \sfC^c$,  in the sense defined above.
	In the third case $\chi_1$ is a primal arc accessible in~$\tilde \sfC^c$.
	Regardless, our present exploration up to this point only produces one of the two accessible arcs required. 
	In the second step, we will perform further explorations which will produce the second arc.
	This step depends on which case occurred in Step 1. \smallskip 

	\noindent{\em Step 2 in cases (1) and (2):}
	Define the region~$\La'$ as~$\La' = \La_R(x_+)$ in case (1) and, in case (2), 
	as the union of~$\La_{R}(u)$ for~$u$ in the bottom part of~$\partial_\infty \La_{2R}(x_+)$. 
	Write~$\partial_\infty \La'$ for the arc of~$\partial \La'$ separating~$z$ from~$\infty$ in~$\tilde\sfC^c$. 
	Split~$\partial_\infty \La'$ at~$x_+ + (R,0)$ in case (1) and at $x_+ + (3R,0)$ in case (2) into its top and bottom sections -- see Figure~\ref{fig:gluing2}.

	\begin{figure}
	\begin{center}
	\includegraphics[height = 5.5cm, page=1]{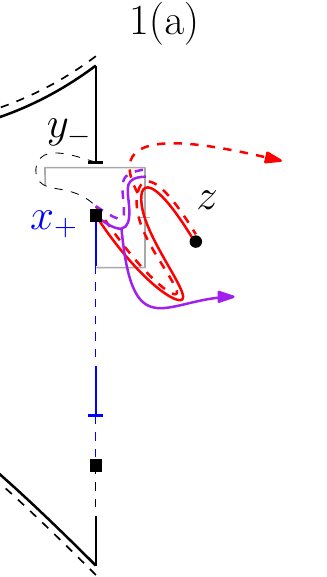}\hspace{.2cm}
	\includegraphics[height = 5.5cm, page=2]{gluing2.pdf}\hspace{.4cm}
	\includegraphics[height = 5.5cm, page=3]{gluing2.pdf}\hspace{.4cm}
	\includegraphics[height = 5.5cm, page=4]{gluing2.pdf}
	\caption{The different scenarios that may occur when exploring $\Gamma$, and potentially $\tilde\Gamma$.
	The exploration up to $\tau$ and the resulting connection is drawn in red, the subsequent construction is in purple.
	The red and purple arrow refer connections to the paths from  $\Lambda_{R}(x + (5R,\pm R))$ to $\calH_{\geq n}$ and $\infty$, which occur with probability comparable to $s_{n-t}$.  
	From left to right: scenarios 1(a), 2(a), 2(b) and 3.
	Note that in scenario 2(b), the exploration path $\Gamma$ never exposes an accessible primal arc. That is why we do not explore $\Gamma$ between $\tau$ and $\tau'$ (grey) but rather explore $\tilde \Gamma$ up to $\tau''$ (purple). }
	\label{fig:gluing2}
	\end{center}
	\end{figure}

	Consider now~$\tau'$ as the first time after~$\tau$ when~$\Gamma$ touches~$\partial_\infty \La'$.
	We distinguish two sub-cases
	\begin{itemize}
	\item[(a)] 	$\Gamma_{\tau'}$ is in the top part of~$\partial_\infty \La'$ or
	\item[(b)] 	$\Gamma_{\tau'}$ is in the bottom part of~$\partial_\infty \La'$.
	\end{itemize}
	In case~$(a)$, write~$\chi_1$ for the wired arc of~$\Gamma_{[0,\tau']}$. 
	Then~$\chi_1$ is a primal arc and~$\chi_0$ is a dual arc, both accessible in~$(\tilde \sfC \cup \Gamma_{[0,\tau']})^c$.

	In case~$(b)$ a more complicated construction is needed. 
	Write~$\tilde \Gamma$ for the exploration path starting at~$x_+$, 
	leaving vertices connected to~$\calA$ in~$\sfC_{\leq t}^c$ on its left 
	(including those of~$\calA$, but not other vertices of~$\sfC_{\leq t}$), 
	and all other vertices on the right. See Figure~\ref{fig:gluing2} for an illustration. 
	
	Write~$\tau''$ for the first time this path touches~$\partial_\infty \La'$; 
	Figure~\ref{fig:gluing2} shows why such a time exists when~$j$ is a top-most seed, 
	and why~$\tilde \Gamma_{\tau''}$ is ``below''~$\Gamma_{\tau'}$ on~$\partial_\infty \La'$. 
	It follows that the wired arc~$\chi_1$ of~$\tilde \Gamma_{[0,\tau'']}$ is accessible in~$(\tilde \sfC \cup \tilde \Gamma_{[0,\tau'']})^c$.
	
	Notice that we do not claim that~$\chi_1$ is accessible when~$\Gamma_{[\tau,\tau']}$ has been explored.
	Thus, one should make a choice at the stopping time~$\tau$ whether to continue exploring~$\Gamma$ up to time~$\tau'$ 
	or whether to explore~$\tilde \Gamma$ up to time~$\tau''$. 
	
	Notice however that either 
	\begin{align}
	&\phi\big[\text{case (a) }\big| \text{ case (1) or (2),~$\Gamma_{[0,\tau]}$ and~$j$ top-most seed}\big]  \geq 1/2 \text{ or }\\
	&\phi\big[\text{case (b) }\big| \text{ case (1) or (2),~$\Gamma_{[0,\tau]}$ and~$j$ top-most seed}\big] \geq 1/2.
	\end{align}

	If (a) is more probable, then explore~$\Gamma$ up to time~$\tau'$. 
	If the resulting exploration corresponds to case (a), define~$\chi_1$ as above and set ${\rm Exp} = \Gamma_{[0,\tau']}$.
	Otherwise say the exploration is unsuccessful.  

	If (b) is more probable, explore~$\tilde\Gamma$ up to time~$\tau''$. 
	If the wired part of~$\tilde \Gamma_{[0,\tau'']}$ is indeed accessible, denote it by~$\chi_1$ and set 
	${\rm Exp} = \Gamma_{[0,\tau]}\cup \tilde \Gamma_{[0,\tau'']}$.
	Otherwise say the exploration is unsuccessful.

	With this construction, if the exploration is successful, the arcs $\chi_0$ and $\chi_1$ are indeed both accessible in $(\sfC_{\leq t} \cup {\rm Exp})^c$. 
	Furthermore, our analysis proves that 
	\begin{align}
		\phi\big[ \text{exploration is successful } \big| \text{ case (1) or (2) and~$j$ top-most seed}\big] \geq 1/2,
	\end{align}
	which implies \eqref{eq:exploration_success} in cases (1) and (2). 
	\medskip 

	\noindent{\em Step 2 in case (3):} This case is treated similarly to case (2), except that $\chi_1$ is now defined and we aim to define~$\chi_0$. 
	Set~$\La'$ to be the union of~$\La_{R}(u)$ for~$u$ in the top part of~$\partial_\infty \La_{2R}(x_+)$. 
	Define~$\partial_\infty \La'$ and its top and bottom sections similarly to how this was done in case (2). 
	
	Define sub-cases (a) and (b) depending on where~$\Gamma$ exits~$\La'$; set~$\tau'$ to be the exit time. 
	If it is more likely to exit on the bottom part, define~$\chi_0$ as the dual (top) side of~$\Gamma_{[\tau,\tau']}$.
	If it is more likely to exit on the top part, start an exploration~$\tilde \Gamma$ from~$x_+$ leaving primal open edges on the left and dual open ones on the right 
	(this exploration runs along~$\partial \calC_{\leq t}$ up to~$y_-$, and continues at least up to~$\partial \La_{2  L  }(x)$ when~$j$ is a top-most seed). 
	Define then~$\chi_0$ as the dual side of~$\tilde \Gamma$ up to the first exit time~$\partial_\infty \La'$.
	
	The same type of analysis as above shows that 
	\begin{align*}
		\phi_{\sfC_{\leq t}}\big[\text{exploration is successful}  \,\big| \text{ case (3) and~$j$ top-most seed}\big] \geq 1/2,
	\end{align*}
	which implies \eqref{eq:exploration_success} in case (3). 
	\medskip
	
	\noindent{\bf Conclusion: proof of~\eqref{eq:j=j_if_success}.}
	Fix now a successful realisation of ${\rm Exp}$, with the primal and dual arcs denoted $\chi_1$ and $\chi_0$, respectively. 
	To ensure $\bfj = j$, it suffices to connect $\chi_1$ to $\calH_{\geq n}$	
	and produce a dual path starting from $\chi_0$ that prevents any connection between $\calB$ and $\calH_{\geq n}$.
	See Figure~\ref{fig:gluing1}, right diagram. 
	
	By~\eqref{eq:cone_contained},~\eqref{eq:DvH} and~\eqref{eq:RSWnc}, 
	with probability comparable to $s_{n-t}$, 
	the cluster $\sfC_{\chi}$ of $\chi_1$ in $(\sfC_{\leq t} \cup {\rm Exp})^c$ intersects $\calH_{\geq n}$, but does not connect to any point in $\calB$. 
	
	Furthermore, conditionally on $\sfC_{\chi}$, the measure in $(\sfC_{\leq t} \cup {\rm Exp} \cup \sfC_{\chi})^c$ is dominated by the measure $\phi_{\sfC_{\leq t}}$. 
	Indeed, the exploration of $\sfC_{\chi}$ produces free boundary conditions in the remaining part of the space. 
	Thus, 
	\begin{align}
	\phi_{\sfC_{\leq t}}\big[ \calB \nxlra{} \calH_{\geq n}  \,\big|\, {\rm Exp},\, \sfC_{\chi}\big] 
	\geq 
	\phi_{\sfC_{\leq t}}\big[ \sfC_{\leq t} \nxlra{} \calH_{\geq n}  \big] \geq 1/2,
	\end{align}
	Averaging over $\sfC_{\chi}$ and using the previous observation, we conclude \eqref{eq:j=j_if_success}.
\end{proof}

We finally turn to the proof of Proposition~\ref{prop:pre-renewal_density}. 

\begin{proof}[Proof of Proposition~\ref{prop:pre-renewal_density}]
Let $C$, $K$ and $\mu$ be given by Proposition~\ref{prop:equation_martingale}.
Set 
\begin{align}
\ell = \frac{2K}{1-\mu}	\quad \text{ so that }\quad  \mu\ell + K = \frac{1 + \mu}{2} \ell.
\end{align}

The proof is in two steps. 
First we prove that, for any $n,t \geq 1$, 
    \begin{align}
	    \phi\big[N_{s} > \ell , \,\,\forall s \in \{ t+1, \dots, t+r \} \,\big|\, \sfC_{\leq t},\, N_t \leq \ell, \,X_n \neq \dagger\big] &\les \exp(-cr)\quad  \text{ and}\label{eq:pre-renewal_density11}\\
	    \phi\big[N_{s} > \ell , \,\,\forall s \in \{1, \dots, r \} \,\big|\, \,X_n \neq \dagger \big] &\les \exp(-cr), \label{eq:pre-renewal_density12}
    \end{align}
    for some universal constant $c > 0$. 
Then will we argue that
    \begin{align}
	    \phi\big[N_{t+C} =1 \,\big|\, \sfC_{\leq t},\, N_t \leq \ell, \,X_n \neq \dagger\big] \ges 1. \label{eq:pre-renewal_density13}
    \end{align}
    It is clear that~\eqref{eq:pre-renewal_density11},~\eqref{eq:pre-renewal_density12} and~\eqref{eq:pre-renewal_density13} imply Proposition~\ref{prop:pre-renewal_density}: indeed, at each time that $N_t \leq \ell$,~\eqref{eq:pre-renewal_density13} ensures that there is a uniform chance that the process comes back to 1. Moreover,~\eqref{eq:pre-renewal_density11} and\eqref{eq:pre-renewal_density12} ensure that the process comes back exponentially often to the state $N_t \leq \ell$.
Thus, we focus on the proof of these three bounds below. \medskip 

We start off with the proof of \eqref{eq:pre-renewal_density11}. Let $\alpha = \frac2{1 + \mu} > 1$.
Fix $t \geq 1$ and work under the conditional measure $P:=   \phi[\, \cdot \,\big|\, \sfC_{\leq t},\,  \,X_n \neq \dagger]$.
For $s \geq 0$, set $M_s =\alpha^{s} N_{t + Cs}$ and $\tau = \inf\{s \geq 1: N_{t + Cs}  \leq \ell\}$. 
Then, \eqref{eq:equation_sur-martingale} and the choice of $\ell$ imply that 
$M_{s \wedge \tau}$ is a positive super-martingale. 

It follows that 
\begin{align}
P[\tau > r ] \leq P[M_{r \wedge \tau} \geq \alpha^{r}  \ell ] \leq \frac1{\alpha^r \ell} E[M_{r \wedge \tau} ]
\leq \frac1{\alpha^r \ell} E[M_{0}].
\end{align}
This directly proves \eqref{eq:pre-renewal_density11}, since  $E[M_{0}] \leq \ell$.

The same reasoning applies to \eqref{eq:pre-renewal_density12}, provided that we have a uniform bound on the expectation
$\phi\big[N_{1} \,|\, \,X_n \neq \dagger]$. Such a bound is easily obtained from \eqref{eq:cone_contained} since
\begin{align}
	\phi\big[N_{1}  \geq 2 \alpha k \,|\, \,X_n \neq \dagger] 
	\les
	\phi \big[\La_{  L  } \lra (\calY_\alpha -(k  L  ,0))^c \,\big|\, \La_{  L  } \lra \calH_{\geq n} \big]
	\les \e^{-c k}.
\end{align}

Finally, \eqref{eq:pre-renewal_density13} is a direct consequence of \eqref{rem:going_down_to_1}.
\end{proof}

\subsection{Mixing when $N_k=1$}\label{subsection: uniform mixing}

We now prove that the process~$(X_k)_k$ satisfies a mixing property at every time $t$ for which~$N_t=1$. 

Write~$\sfC_{\geq t}=\sfC\setminus \sfC_{\leq t}$ and~${\sfC}_{ s\leq \cdot \leq t}= \sfC_{\leq t} \setminus \sfC_{\leq s}$ for~$0 \leq s \leq t$.
We will argue that, conditionally on the past $\sfC_{\leq t}$, the future $\sfC_{\geq t + 1/2}$ may be sampled independently of the past with positive probability. 
To state our result well, some vocabulary needs to be introduced. 
First of all, for any $n\geq 0$, we call $\mathcal{L}_n := \phi^1_{\calH_{\geq 0}}[\, \cdot\,|\, \mathscr{L}_{0,0} \lra \calH_{\geq n} ]$.

A realisation $\chi$ of $\sfC_{\geq t+1/2} - (Lt,X_t)$ is called a {\em cluster-future}.  
It may be formed of several connected components, each containing at least some edge in $\calH_{\geq 1/2}$ and at least one point on $\partial \calH_{\geq 1/2}$; see Figures~\ref{fig:well_behaved}  and~\ref{fig:link} for illustrations. 
The section $\sfC_{t\leq . \leq t+1/2} = \sfC \setminus (\sfC_{\leq t} \cup \sfC_{\geq t+1/2})$ connecting the past cluster to the (translate of the) cluster-future is called the {\em link}.
We now define a particular class of {\em well-behaved} cluster-futures. 

Consider the measure $\phi_{\calH_{\geq 0}}^1$ and write $\sfC$ for the cluster of $\mathscr{L}_{0,0}$. 
Fix a cluster-future $\chi$ and define $\calE(\chi)$ as the event that the rectangle $[\frac38 L , \frac12 L] \times [-L,L]$ contains horizontal open crossings, 
that the lowest one is contained in $[\frac38 L , \frac12 L] \times [-L/2,-L/3]$ and the highest one is contained in $[\frac38 L , \frac12 L] \times [L/3,L/2]$, 
and that they are connected via an open path in $[\frac38 L , \frac12 L] \times [-L,L]$. 
Moreover, $\calE(\chi)$ requires that, if we consider the cluster $\Xi$ of $\chi$ in $\calH_{\geq 3/8}$, 
we have $\Xi\setminus\chi \subset [\frac38 L , \frac12 L] \times [-L,L]$ and it crosses horizontally $[\frac38 L , \frac12 L] \times [-L,L]$.
See Figure~\ref{fig:well_behaved} for an illustration.

\begin{figure}
\begin{center}
\includegraphics[width = 0.4\textwidth]{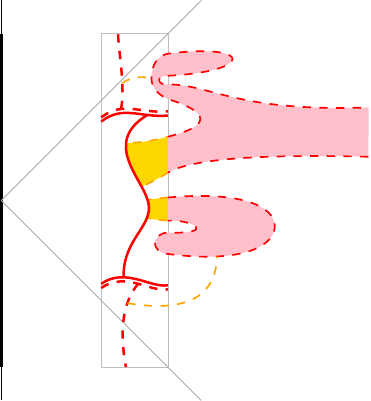}
\caption{The cluster future $\chi$ is formed of the pink connected components, surrounded by dual paths. 
The thick red crossings in the rectangle $[\frac38 L , \frac12 L] \times [-L,L]$ are required for $\calE(\chi)$. 
Moreover, the orange part --- including the dual orange connections --- ensures that $\chi$ is linked to these crossings in such a way that $\calE(\chi)$ occurs. } 
\label{fig:well_behaved}
\end{center}
\end{figure}

For a constant $c_{\rm WB} >0 $, we call $\chi$ $c_{\rm WB}$-well-behaved if 
$\chi$ is contained in $\calY$ and 
\begin{align}\label{eq:c_WB}
	\phi_{\calH_{\geq 0}}^1[\calE(\chi) \,| \,\sfC_{\geq 1/2} = \chi] \geq c_{\rm WB}.
\end{align}
The following is a direct consequence of Proposition~\ref{prop:cone_contained} and \eqref{eq:RSWnc}. 

\begin{Cor}\label{cor:cone_contained}
	There exists a universal constant $c_{\rm WB} >0$ such that, for all $n,s \geq 1$,
	\begin{align}\label{eq:cone_contained_calL}
	  \calL_{n}[\sfC_{\geq 1/2}  \text{ $c_{\rm WB}$-well-behaved}]  \geq c_{\rm WB} \quad \text{ and } \quad 
	  \calL_{n}[\sfC \subset \calY - (s L,0)]  \geq 1- \e^{-c_{\rm WB} s}.
	\end{align}
	Moreover, the same holds under 
	$\phi [\, \cdot \,|\, {\rm Past}(A) \text{ and } \sfC \cap \calH_{\geq n} \neq \emptyset ]$ for any potential past $A$.
\end{Cor}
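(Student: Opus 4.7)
The second bound on $\sfC \subset \calY - (sL,0)$ is a direct consequence of Proposition~\ref{prop:cone_contained}. The measure $\calL_n = \phi^1_{\calH_{\geq 0}}[\cdot \mid \mathscr{L}_{0,0} \lra \calH_{\geq n}]$ equals $\phi[\cdot \mid {\rm Past}(A_0), \mathscr{L}_{0,0} \lra \calH_{\geq n}]$ for a specific potential past $A_0$ that wires all of $\partial \calH_{\geq 0}$. Since $\mathscr{L}_{0,0} \subset \Lambda_L$, the event $\{\sfC \not\subset \calY - (sL,0)\}$ is contained in $\{\Lambda_L \lra (\calY - (sL,0))^c\}$, and \eqref{eq:cone_contained} immediately delivers the $e^{-cs}$ bound with a suitable constant. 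Replacing $A_0$ by an arbitrary potential past $A$ and rerunning the same inclusion gives the stated analogue under $\phi[\cdot \mid {\rm Past}(A),\, \sfC \cap \calH_{\geq n} \neq \emptyset]$.

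For the first bound, my plan is to establish the stronger joint lower bound
\[
\calL_n\big[\sfC_{\geq 1/2} \subset \calY \text{ and } \calE(\sfC_{\geq 1/2})\big] \geq c_0
\]
for some universal $c_0 > 0$. Writing the left-hand side as $E_{\calL_n}\big[\mathbf{1}_{\chi \subset \calY} \, \phi_{\calH_{\geq 0}}^1[\calE(\chi) \mid \sfC_{\geq 1/2} = \chi] \big]$ and using $E[X] \leq \mathbb P[X \geq c_0/2] + c_0/2$ for $X \leq 1$ then yields, with $\calL_n$-probability at least $c_0/2$, a realisation $\chi$ satisfying both $\chi \subset \calY$ and $\phi_{\calH_{\geq 0}}^1[\calE(\chi) \mid \sfC_{\geq 1/2} = \chi] \geq c_0/2$, which is precisely the $c_{\rm WB}$-well-behaved property with $c_{\rm WB} = c_0/2$. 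To prove the joint lower bound I construct two good events acting on almost disjoint regions. The first, $G_1$, is a local event in the strip $[0, L] \times \mathbb R$: using \eqref{eq:RSWnc} I force dual open crossings along $y = \pm 2L$ for $x \in [0, L]$, confining any primal cluster from $\mathscr{L}_{0,0}$ to $x = L$ within $|y| \leq 2L$, while simultaneously imposing the explicit primal crossings in $[\tfrac38 L, \tfrac12 L] \times [-L, L]$ that realise $\calE(\chi)$ by definition. The second, $G_2$, asserts that the part of the cluster in $\calH_{\geq 1}$ is contained in $\calY + (L,0)$; conditional on the configuration in $\calH_{\leq 1}$, Proposition~\ref{prop:cone_contained} applied (after translation by $(L,0)$) with potential past $\sfC \cap \calH_{\leq 1}$ gives $G_2$ with probability at least $1 - e^{-c}$. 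On $G_1 \cap G_2$, for $x \in [L/2, L]$ we have $|y| \leq 2L \leq \alpha x$ (since $\alpha \geq 4$), and for $x \geq L$ we have $|y| \leq \alpha(x-L) \leq \alpha x$; hence $\sfC_{\geq 1/2} \subset \calY$, and $\calE(\sfC_{\geq 1/2})$ holds by construction. The uniform positive $\calL_n$-probability of $G_1 \cap G_2$ follows by combining the RSW lower bound on $G_1$ with Proposition~\ref{prop:cone_contained} for $G_2$, and invoking the mixing Lemma~\ref{lem:cone_to_half-plane_mixing} to absorb the effect of the long-range conditioning $\mathscr{L}_{0,0} \lra \calH_{\geq n}$ on the local event $G_1$.

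The main obstacle is the strict containment $\chi \subset \calY$ with apex at the origin, which is strictly stronger than the shifted-cone bound $\sfC \subset \calY - (kL, 0)$ provided directly by Proposition~\ref{prop:cone_contained}. The hybrid construction above resolves this: short-range RSW pins the $y$-extent for $x \in [0, L]$, while long-range cone containment controls $x \geq L$. The choice $\alpha \geq 4$ is precisely what makes the two regimes compatible, through the numerical inequality $2L \leq \alpha \cdot L/2$ at the interface $x = L/2$.
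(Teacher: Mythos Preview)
Your approach matches the paper's: both argue the second bound directly from Proposition~\ref{prop:cone_contained}, both reduce the first bound to showing $\calL_n[\sfC_{\geq 1/2}\subset\calY,\ \calE(\sfC_{\geq 1/2})]\ges 1$, and both conclude via the same Markov-type averaging argument you spell out.

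Two execution points deserve tightening. First, your description of $G_1$ does not fully guarantee $\calE(\chi)$: beyond the primal crossings, $\calE$ requires that the \emph{highest} and \emph{lowest} horizontal crossings of $[\tfrac38 L,\tfrac12 L]\times[-L,L]$ lie in prescribed sub-strips and that $\Xi\setminus\chi$ stay inside that box; your dual crossings at $|y|=2L$ do not enforce this, and further dual constraints inside $[\tfrac38 L,\tfrac12 L]\times[-L,L]$ are needed. Second, Lemma~\ref{lem:cone_to_half-plane_mixing} decouples $\calH_{\leq 0}$ from $\calY\cap\calH_{\geq 1}$, whereas your $G_1$ lives in the strip $[0,L]$, so the lemma does not apply as stated. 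The paper avoids both issues by working in the \emph{unconditioned} measure $\phi^1_{\calH_{\geq 0}}$: it explores the top and bottom interfaces of $\sfC$ dynamically, steers them via \eqref{eq:RSWnc} to realise the full $\calE$ structure, and only then pays the cost of the connection to $\calH_{\geq n}$ inside $\calY$ using Proposition~\ref{prop:cone_contained} and Lemma~\ref{lem:cone_to_half-plane_mixing} in one step --- the conditioning on $\{\mathscr{L}_{0,0}\lra\calH_{\geq n}\}$ is then absorbed by division rather than carried through the local construction.
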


\begin{proof}
	The second inequality is indeed a direct consequence of Proposition~\ref{prop:cone_contained}. 
	We focus on the first and treat the case of $\calL_{n}$ for simplicity. 
	
	By Proposition~\ref{prop:cone_contained} and \eqref{eq:RSWnc} we find 
	\begin{align}\label{eq:well_behaved_0}
		\phi_{\calH_{\geq 0}}^1\big[\sfC_{\geq 1/2} \subset \calY, \, \calE(\sfC_{\geq 1/2}) \text{ and } \mathscr{L}_{0,0} \lra \calH_{\geq n}\big] \ges
		\phi_{\calH_{\geq 0}}^1\big[\mathscr{L}_{0,0} \lra \calH_{\geq n}\big] 
	\end{align}
	Indeed, we may first explore the top and bottom boundaries of $\sfC$, starting on $\partial \calH_{\geq 0}$, up to the first time they reach $\calH_{\geq 1}$.
	By \eqref{eq:RSWnc}, we may direct these to satisfy the requirements of the event $\calE(\cdot)$ and to arrive at a macroscopic distance from each other. 
	Conditionally on their realisation, using \eqref{eq:RSWnc} again, we may connect them by a primal path in $[\frac38 L , \frac12 L] \times [-L,L]$, 
	and connect them by dual paths to the top and bottom of $[\frac38 L , \frac12 L] \times [-L,L]$. 
	Finally, we connect them to $\calH_{\geq 1}$ while ensuring that their cluster remains in $\calY$. 
	Conditionally on the previous events, the latter has probability of the order of $\phi_{\calH_{\geq 0}}^1[\mathscr{L}_{0,0} \lra \calH_{\geq n}]$ due to Proposition~\ref{prop:cone_contained} and Lemma~\ref{lem:cone_to_half-plane_mixing}.
	
	From \eqref{eq:well_behaved_0}, we conclude that 
	\begin{align}
		\sum_{\chi} \phi_{\calH_{\geq 0}}^1[\calE(\chi) \,| \,\sfC_{\geq 1/2} = \chi] \calL_{n}[\sfC_{\geq 1/2} = \chi] =	\calL_n \big[\sfC_{\geq 1/2} \subset \calY, \, \calE(\sfC_{\geq 1/2})\big] \ges 1.
	\end{align}
	where the sum is over all possible cluster-futures $\chi \subset \calY$. 
	The result follows readily. 
\end{proof}

Henceforth we fix $c_{\rm WB}$ given by Corollary~\ref{cor:cone_contained} and omit it from notation. 
We may now state the crucial mixing estimate for the process $(X_t)_{t\geq 0}$, and more generally for the exploration $(\sfC_{\leq t})_{t\geq 0}$ of the cluster of $0$.

\begin{Prop}\label{prop:pre-renewal_is_renewal}
	There exists~$\eta > 0$ such that for any~$1 \leq t\leq n~$ and 
	any well-behaved cluster-future~$\chi$ 
	\begin{align}\label{eq:pre-renewal_is_renewal}
	        \phi\big[& \sfC_{\geq t+1/2}= \chi +(Lt,X_t) \text{ and } \sfC_{\geq t}\subset \calY + (L(t-1),X_t)  \,\big|\, \sfC_{\leq t}, N_t = 1, X_{n} \neq \dagger\big] \nonumber\\
	        &\geq \eta\,  \calL_{n-t}[\sfC_{\geq 1/2} = \chi],
	 \end{align}
	 where~$\chi +(Lt,X_{t})$ is the translation of~$\chi$ by~$(Lt,X_{t})$. 
	 
	Moreover, for any~$1 \leq s < t \leq n$, any possible realisation~$\zeta$ of~$\sfC_{s + \frac12 \leq \cdot  \leq t} -  (Ls,X_s)$ such that~$N_t=1$, 
	and any realisation $\chi$ of $\sfC_{\geq t} - (Lt,X_t)$ contained in $\calY - (\frac{t-s}2 L,0)$, 
	\begin{align}\label{eq:exp_mem_tail}
	        \sum_\chi \Big|\phi\big[\sfC_{\geq t}= \chi +(Lt,X_t) \,\big \vert\,  \sfC_{\leq s}, \sfC_{s + \frac12 \leq \cdot  \leq t} = \zeta +(Ls,X_s) , X_{n} \neq \dagger\big] & \nonumber \\
				- \calL_{n-s}\big[\sfC_{\geq t-s} = \chi +(L(t-s),X_{t-s}) \,\big|\, \sfC_{\frac12 \leq .\leq t-s} = \zeta\big]& \Big| 
		\leq \e^{-\eta (t-s)}. 
	 \end{align}
\end{Prop}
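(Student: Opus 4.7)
I would treat the two displays separately.

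\paragraph{Proof of \eqref{eq:pre-renewal_is_renewal}.}
Translate coordinates by $-(Lt, X_t)$ so that the unique active segment of $\sfC_{\leq t}$ becomes $\mathscr{L}_{0,0}$ and the translated past $\tilde\sfC$ lies in $\calH_{\leq 0}$ with $\tilde\sfC \cap \partial\calH_{\leq 0} \subset \mathscr{L}_{0,0}$. By the domain Markov property, the conditioning on $\sfC_{\leq t}$ amounts to conditioning on an event of the form ${\rm Past}(A)$ for a potential past $A$ built from $\tilde\sfC$, while $\{X_n \neq \dagger\}$ becomes $\{\mathscr{L}_{0,0} \lra \calH_{\geq n-t}\}$. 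The bound \eqref{eq:DvH} extracted from the proof of Proposition~\ref{prop:cone_contained} yields $\phi[\mathscr{L}_{0,0} \lra \calH_{\geq n-t}\,|\,{\rm Past}(A)] \asymp s_{n-t}$. It remains to match this with a lower bound on $\phi[\sfC_{\geq 1/2}=\chi,\,\sfC\subset\calY-(L,0),\,\mathscr{L}_{0,0}\lra\calH_{\geq n-t}\,|\,{\rm Past}(A)]$. Well-behavedness and \eqref{eq:c_WB} give $\phi^1_{\calH_{\geq 0}}[\sfC_{\geq 1/2}=\chi,\calE(\chi)] \geq c_{\rm WB}\,\phi^1_{\calH_{\geq 0}}[\sfC_{\geq 1/2}=\chi] = c_{\rm WB}\,s_{n-t}\,\calL_{n-t}[\sfC_{\geq 1/2}=\chi]$ (the latter identity being valid whenever $\chi$ reaches $\calH_{\geq n-t}$, which is the only case one needs to consider). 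The primal crossings of $\calE(\chi)$ in $[\tfrac{3}{8}L,\tfrac{1}{2}L]\times[-L,L]$ then act as a bridge, and an application of \eqref{eq:RSWnc} in $[0,\tfrac{3}{8}L]\times[-L,L]$, in the spirit of the exploration carried out in Lemma~\ref{lem:seed}, constructs with uniformly positive probability an open primal path connecting the bridge to $\mathscr{L}_{0,0}\cap\tilde\sfC$, flanked by the dual circuits that enforce $\sfC\subset\calY-(L,0)$. Lemma~\ref{lem:cone_to_half-plane_mixing} then transfers the combined estimate from $\phi^1_{\calH_{\geq 0}}$ to $\phi[\cdot\,|\,{\rm Past}(A)]$ up to a universal multiplicative constant. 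Dividing by the survival probability $\asymp s_{n-t}$ yields \eqref{eq:pre-renewal_is_renewal}.

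\paragraph{Proof of \eqref{eq:exp_mem_tail}.}
Translate by $-(Ls,X_s)$: both $P_1$ and $P_2$ now live on configurations in $\calH_{\geq 0}$ and differ only through the boundary conditions on $\partial\calH_{\leq 0}$ (inherited from an actual past in $P_1$, fully wired in $P_2$), while both condition on $\sfC_{0\leq\cdot\leq t-s} = \zeta$ and on $\sfC$ reaching $\calH_{\geq n-s}$. The constraint $\chi\subset\calY-(\tfrac{t-s}{2}L,0)$ places the target event at distance $\asymp(t-s)L$ from $\partial\calH_{\leq 0}$. The idea is to show that, under both conditional measures, the probability that any open path in $\calH_{0\leq\cdot\leq t-s}\setminus(\zeta\cup\partial\zeta)$ links $\partial\calH_{\leq 0}$ to $\partial\calH_{\geq t-s}$ is at most $\e^{-c(t-s)}$, via Lemma~\ref{lem:L(p)expdecay} applied in the complement of $\zeta\cup\partial\zeta$ with the appropriate induced boundary conditions. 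On the complement of this bad event, the effective wiring on $\partial\calH_{\geq t-s}$ is determined solely by $\zeta$ and is thus identical under $P_1$ and $P_2$, so the conditional laws of $\sfC_{\geq t-s}$ agree exactly. Summing the resulting discrepancy over $\chi$ gives the total variation bound $\e^{-\eta(t-s)}$.

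\paragraph{Main obstacles.}
The delicate step in \eqref{eq:pre-renewal_is_renewal} is the simultaneous construction of the primal link and the confining dual circuits under the perturbed measure $\phi[\cdot\,|\,{\rm Past}(A)]$, for which the exploration procedure developed in Lemma~\ref{lem:seed} provides the template (in particular the use of tubes of width $R/4$ to reach $\mathscr{L}_{0,0}\cap\tilde\sfC$). In \eqref{eq:exp_mem_tail}, the main obstacle is the interaction between the survival conditioning and Lemma~\ref{lem:L(p)expdecay}: conditioning on $\mathscr{L}_{0,0}\lra\calH_{\geq n-s}$ may in principle inflate the conditional probability of long open paths in the free region $\calH_{0\leq\cdot\leq t-s}\setminus(\zeta\cup\partial\zeta)$. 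I would deal with this by noting that the cluster realising survival is forced through $\zeta$ (since $\sfC_{0\leq\cdot\leq t-s}=\zeta$ is fixed) and is therefore independent of the unwanted long connections in the free region; the latter can then be estimated unconditionally by Lemma~\ref{lem:L(p)expdecay}, and the survival probabilities of $P_1$ and $P_2$ compared via \eqref{eq:DvH} up to a universal factor.
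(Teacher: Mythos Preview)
Your overall architecture for \eqref{eq:pre-renewal_is_renewal} matches the paper's, but there is a genuine gap in the accounting. After translation, the survival event $\{X_n\neq\dagger\}$ is \emph{not} $\{\mathscr{L}_{0,0}\lra\calH_{\geq n-t}\}$: it is $\{\tilde\sfC\cap\mathscr{L}_{0,0}\lra\calH_{\geq n-t}\}$, and $\tilde\sfC\cap\mathscr{L}_{0,0}$ may consist of a single vertex. Hence your claim that the survival probability is $\asymp s_{n-t}$ is false in general (only $\les s_{n-t}$ is clear), and for the same reason the claim that the link from the bridge to $\tilde\sfC\cap\mathscr{L}_{0,0}$ can be built ``with uniformly positive probability'' fails: connecting a single point to something at distance of order $L$ costs $\asymp\pi_1(L)$, which tends to $0$ as $p\nearrow p_c$. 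The exploration of Lemma~\ref{lem:seed} that you invoke does not give a uniformly positive lower bound---it gives a lower bound of the form $\phi[\sfC_{\leq t}\lra\partial\calH_{\geq t+\frac14}\,|\,\sfC_{\leq t}]$. The paper's fix is precisely to carry this short-range factor explicitly: it shows
\[
\phi_D^\tau\big[\sfC_{\leq t}\Leftrightarrow\Gamma_E+(Lt,X_t)\big]\ \ges\ \phi\big[\sfC_{\leq t}\lra\partial\calH_{\geq t+\frac14}\,\big|\,\sfC_{\leq t}\big]
\quad\text{and}\quad
\phi\big[X_n\neq\dagger\,\big|\,\sfC_{\leq t}\big]\ \les\ \phi\big[\sfC_{\leq t}\lra\partial\calH_{\geq t+\frac14}\,\big|\,\sfC_{\leq t}\big]\,s_{n-t},
\]
so that the factor cancels in the ratio. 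A related loose end: the link event lives in $\calH_{0\leq\cdot\leq 1/2}$, which is outside the region $\calY_\alpha\cap\calH_{\geq 1}$ to which Lemma~\ref{lem:cone_to_half-plane_mixing} applies; the paper avoids this by transferring only the $\sfC^{\Gamma}$-part (supported in $\calH_{\geq 3/8}$) via mixing and treating the link separately under $\phi[\cdot\,|\,\sfC_{\leq t}]$.

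Your plan for \eqref{eq:exp_mem_tail} is correct and coincides with the paper's (which simply points to the argument behind \eqref{eq:cone_to_half-plane_mixing2} and notes that conditioning on $\zeta$ only helps the separating dual paths). Your handling of the survival conditioning---observing that the surviving cluster is forced through $\zeta$ and is therefore decoupled from long connections in $\zeta^c$---is the right point.
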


The first inequality together with Corollary~\ref{cor:cone_contained} state that, at every pre-renewal time, 
there is a positive probability for the cluster-future to be sampled independently of the past, thus creating an actual point of renewal. 
It is tempting to think that this implies exponential tails for the spacing between renewal times, thus proving Theorem~\ref{thm:killed_renewal_structure}. 

Unfortunately this is not the case. Indeed, \eqref{eq:pre-renewal_is_renewal} implies a first-moment bound on the number of renewals, but may not be used to control correlations between renewal times. 
Hence the need for~\eqref{eq:exp_mem_tail}, which  states that, if the clusters sampled according to 
$\phi[. \, \vert\,  \sfC_{\leq s} , X_{n} \neq \dagger]$ and~$\calL_{n-s}$ are coupled for~$t-s$ steps, then they remain coupled for the rest of the process  with probability exponentially close to~$1$. 
Crucially, the value of~$\eta>0$ may be chosen uniformly in~$p$, the direction~$\vec w$ as well as in~$t,s$ and~$n$ and the realisation of the cluster up to~$t$.

Finally, note that in \eqref{eq:pre-renewal_is_renewal} we require that both the translated cluster-future and the link be contained in $\calY + (L(t-1), X_t)$. 
This will eventually ensure the cone-containment property of the cluster; it is a technical detail which may be ignored in a first instance. 

\begin{Rem}\label{rem:link}
Eventually~\eqref{eq:pre-renewal_is_renewal} will be used to state that at any pre-renewal time,  $\sfC_{\geq t+1/2}$ is sampled with positive probability independently from the past. 
We do not claim this about the entirety of $\sfC_{\geq t}$; indeed,~\eqref{eq:pre-renewal_is_renewal} is not expected to apply to the link.
See also Figure~\ref{fig:link}. 

This is a crucial difference with~\cite{campaninonioffevelenikozrandomcluster}, where the link is trivial due to the use of cone points, 
and independent sampling estimates apply to the whole of $\sfC_{\geq t}$.
\end{Rem}

The rest of the section is dedicated to proving Proposition~\ref{prop:pre-renewal_is_renewal}.

\begin{figure}
\begin{center}
\includegraphics[width =.6\textwidth, page = 1]{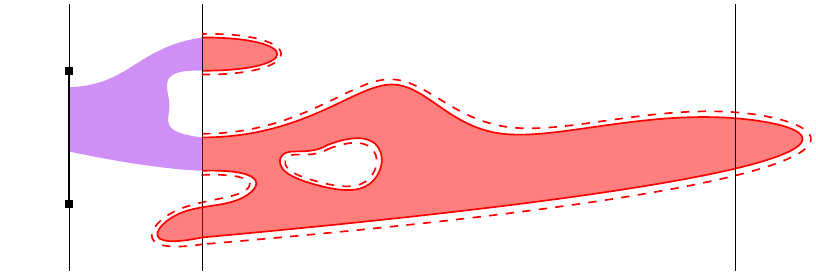}\vspace{.3cm}
\\
\includegraphics[width =.6\textwidth, page = 2]{link.pdf}
\caption{\emph{Top:} A realisation $\chi$ of $\sfC_{\geq 1/2}$ under $\calL_{n-t}$ in red, and the link in purple. 
\emph{Bottom:} The same realisation, shifted by $(Lt,X_t)$ as $\sfC_{\geq t+1/2}$. The measure here is $\phi[ \cdot \,|\, \sfC_{\leq t}, X_{n} \neq \dagger\big]$. Note that the link is different in the two diagrams.}
\label{fig:link}
\end{center}
\end{figure}

\begin{proof}[Proof of Proposition~\ref{prop:pre-renewal_is_renewal}]
	We start with the proof of~\eqref{eq:pre-renewal_is_renewal}, which is the most complicated property.

	Working under $\phi_{\calH_{\geq 0}}^1$,
	write $\Gamma_{N}$ and $\Gamma_{S}$ for the highest and lowest, respectively, open horizontal crossings of $[\frac38 L , \frac12 L] \times [-L,L]$
	and $\Gamma_{E}$ for the right-most open path connecting these. 
	When all three paths exist, write ${\rm UnExp}$ for the region of $[\frac38 L , \frac12 L] \times [-L,L]$ between $\Gamma_N$ and $\Gamma_S$ and left of $\Gamma_E$. 
	Furthermore, write $\sfC^{\Gamma}$ the cluster of $\Gamma_N$ in the region $\calH_{\geq \frac38 L} \setminus {\rm UnExp}$.
	When the paths $\Gamma_N$, $\Gamma_S$ or $\Gamma_E$ do not exist, set $\sfC^{\Gamma} = \emptyset$. 
	Finally, define $\sfC^{\Gamma}_{\geq 1/2}$ in a same way as $\sfC_{\geq 1/2}$. 
	
	Fix a well-behaved cluster-future $\chi$ that reaches $\calH_{\geq n-t}$.  The event $\calE(\chi)$ may be determined by $\sfC^{\Gamma}$. Indeed, it requires that
	all three paths $\Gamma_{N},\,\Gamma_S$ and $\Gamma_E$ exists, that the first two be contained in specific regions and that $\chi$ is connected to those paths in a specific way inside $\calH_{\geq \frac38 L} \setminus {\rm UnExp}$ .
	Recall the notation $s_{n} = \phi_{\calH_{\leq 0}}^1[\mathscr{L}_{0,0} \lra \calH_{\geq n}]$.
	We have
	\begin{align}
		\calL_{n-t}[\sfC_{\geq 1/2} = \chi] 
		&\les \tfrac{1}{s_{n-t}} 		
		\phi_{\calH_{\geq 0}}^1[\calE(\chi) \text{ and }\sfC_{\geq 1/2} = \chi] \\
		& \leq \tfrac{1}{s_{n-t}}  \phi_{\calH_{\geq 0}}^1[\calE(\chi) \text{ and } \sfC^{\Gamma}_{\geq 1/2} = \chi],
			\label{eq:pre-renewal_is_renewal1}
	\end{align}
	where the first line is a consequence of the definition of well-behaved cluster-futures and the second is due to the inclusion of events. 
	
	Similarly, when working under $\phi[\,\cdot \,|\, \sfC_{\leq t}]$, with a past $\sfC_{\leq t}$ such that $N_t = 1$, 
	define $\tilde \sfC^{\Gamma}$ in the same way as $\sfC^{\Gamma}$, but shifted by $(Lt,X_t)$.
	Then 
	\begin{align}
  		&\phi\big[ \sfC_{\geq t+1/2}= \chi +(Lt,X_t)  \text{ and } \sfC_{\geq t} \subset \calY + (L(t-1),X_t)\,\big|\, \sfC_{\leq t}, X_{n} \neq \dagger\big] \\
		& \geq  \frac{1}{\phi\big[ X_n \neq \dagger \,\big|\, \sfC_{\leq t} \big]}\sum_{\chi^\Gamma} \phi\big[\tilde\sfC^{\Gamma} = \chi^\Gamma +(Lt,X_t) \,\big| \, \sfC_{\leq t} \big] \cdot  \phi_{D}^\tau \big[\sfC_{\leq t} \Leftrightarrow \Gamma_E + (Lt,X_t) \big],
	\end{align}
	where the sum is over all possible realisations $\chi^\Gamma$ of $\sfC^{\Gamma}$ such that 
	$\sfC^{\Gamma}_{\geq 1/2} = \chi$ and such that $\calE(\chi)$ occurs.
	Above $D$ is the complement of $\sfC_{\leq t} \cup (\chi^\Gamma + (Lt,X_t))$ and $\tau$ are the boundary conditions induced by conditioning on $\tilde\sfC^{\Gamma} = \chi^\Gamma +(Lt,X_t)$, that is, those induced by $\sfC_{\leq t}$ on its boundary 
	and which are wired on the portion on $\partial (\chi^\Gamma  + (Lt,X_t))$ formed of $\Gamma_N +  (Lt,X_t)$, $\Gamma_S + (Lt,X_t)$ and $\Gamma_{E} + (Lt,X_t)$, and free elsewhere. 
	Finally, we write $\sfC_{\leq t} \Leftrightarrow \Gamma_E + (Lt,X_t)$ for the event that $\sfC_{\leq t}$ and  $\Gamma_E + (Lt,X_t)$  are connected, but that they are not connected to any other point of $\calH_{\geq t+1/2}$ or $(\calY + (L(t-1),X_t))^c$. 
	
	We now argue that 
	\begin{align}
		\phi_{D}^\tau \big[\sfC_{\leq t} \Leftrightarrow \Gamma_E + (Lt,X_t)\big] 
		\ges \phi_{D}^\tau \big[\sfC_{\leq t} \lra  \partial \calH_{\geq t + \frac14} \big] 
		&\ges \phi \big[\sfC_{\leq t} \lra  \partial \calH_{\geq t + \frac14} \,\big|\, \sfC_{\leq t} \big] 
		\\\text{ and }\quad 
		\phi\big[ X_n \neq \dagger \,\big|\, \sfC_{\leq t} \big]& \les \phi \big[\sfC_{\leq t} \lra  \partial \calH_{\geq t + \frac14} \,\big|\, \sfC_{\leq t} \big]  s_{n-t}.
	\end{align}
	The first inequality is proved as Lemma~\ref{lem:seed} and uses \eqref{eq:RSWnc}. Indeed, under the event $\sfC_{\leq t} \lra  \partial \calH_{\geq t + \frac14}$, 
	we may explore either the top-most or bottom-most interface stemming from $\sfC_{\leq t}$ so that, with positive probability, it exposes a wired arc. 
	Then, it suffices to connect the wired arc to $\Gamma_E$ via a primal path and 
	to connect the top and bottom of $\sfC_{\leq t}$ to the top of $\Gamma_N$ and the bottom of $\Gamma_S$ by dual paths, all contained in $\calH_{\leq t+1/2} \cap \big(\calY + (L(t-1),X_t)\big)$ --- \eqref{eq:RSWnc}  and the wide opening of $\calY$ allow one to construct these connections with positive probability. 
	The second and third inequalities follow from Lemma~\ref{lem:cone_to_half-plane_mixing}. 

	We conclude that 
	\begin{align}
  		\phi\big[ &\sfC_{\geq t+1/2}= \chi +(Lt,X_t)  \text{ and } \sfC_{\geq t} \subset \calY + (L(t-1),X_t)\,\big|\, \sfC_{\leq t}, X_{n} \neq \dagger\big] \\
		&\ges  \tfrac{1}{s_{n-t}}\sum_{\chi^\Gamma} \phi\big[\tilde\sfC^{\Gamma} = \chi^\Gamma +(Lt,X_t) \,\big| \, \sfC_{\leq t} \big]\\
		&\ges  \tfrac{1}{s_{n-t}} \sum_{\chi^\Gamma}  \phi_{\calH_{\geq 0}}^1\big[\sfC^{\Gamma} = \chi^\Gamma \big]\\
		&=  \tfrac{1}{s_{n-t}} \phi_{\calH_{\geq 0}}^1\big[\calE(\chi) \text{ and } \sfC^{\Gamma}_{\geq 1/2} = \chi\big],
		\label{eq:pre-renewal_is_renewal2}
	\end{align}
	where the sum is over all $\chi^\Gamma$ as above. The second inequality is due to the mixing property of Lemma~\ref{lem:cone_to_half-plane_mixing} and the fact that $\chi^\Gamma$ is contained in $\calY \cap \calH_{\geq 3/8}$. 
	Finally, \eqref{eq:pre-renewal_is_renewal1} and \eqref{eq:pre-renewal_is_renewal2} together imply \eqref{eq:pre-renewal_is_renewal}.
	\medskip 	
	
	The second property~\eqref{eq:exp_mem_tail} follows by the reasoning leading to the mixing property \eqref{eq:cone_to_half-plane_mixing2}.
	Indeed, even though we are conditioning on the edges of the ``middle part'' given by $\zeta$, observe that the conditioning actually helps the existence of dual paths separating $\calH_{\geq 0}$ and $\calY + (\frac{t-s}{2}L, 0)$ in $\zeta^c$.
	Thus, the proof of the mixing property can be repeated \emph{mutatis mutandis} and yields~\eqref{eq:exp_mem_tail}, and we do not give further details.   
\end{proof}

\subsection{Proof of Theorem~\ref{thm:killed_renewal_structure}}

	Fix a direction~$\vec w$. 
	Recall the notations~$\sfC$ for the cluster of $0$ and $N_t$ for the number of active segments on $\partial \calH_{\leq t}$. 
	Write~$S_0 = 0$ and for~$k\geq 0$ set 
	\begin{align}
		S_{k+1}  = \inf\{ t \geq S_k+2: \, N_t=1\}
	\end{align}
	Note that we impose that~$S_{k+1} - S_k \geq 2$; this is purely for technical reasons. 
	Recall that the times $(S_k)_{k\geq 1}$ are called the pre-renewal times of $(X_t)_t$. 
	Let~$K$ be the first index for which~$S_K =\infty$. 
			
	Let~$\sfD_k = \sfC_{S_{k-1}\leq .\leq S_k}$ for~$k = 1,\dots, K-1$. 
	We also set~$\sfD_{K} = \sfC_{\geq S_{K-1}}$; 
	this is the only piece~$\sfD_k$ which does not end with a pre-renewal time.
	First we will describe how to sample the pieces~$(\sfD_k)_{k\geq0}$ sequentially, which in turn constructs the sequence~$(S_k)_{k\geq0}$. 
	By constructing the pieces $(\sfD_k)_{k\geq0}$, we construct the cluster $\sfC$ and therefore the sequence $(X_t)_{t\geq0}$.
	We will later construct the variables $(Y_t)_{t\geq 0}$ and prove the various properties of Definition~\ref{def:KMRP}.
	
	For this proof, write~$\bbP$ for the probability measure used to sample~$\sfC$ according to the procedure described below. 
	To directly prove that the process has a mass-gap, we will sample $\sfC$ under the conditional measure $\phi[\cdot\,|\,0 \lra \calH_{\geq n}]$, or equivalently $\phi[\cdot\,|\, X_n \neq \dagger]$, for some arbitrary $n\geq 0$.
	To sample under the unconditional measure, it suffices to set $n= 0$. 
	\medskip 
	
	\noindent {\bf Sequential sampling of~$\sfC$.} 
	Sample the initial step $\sfD_1$ as $\tilde\sfC_{\leq S_1}$ where $\tilde \sfC$ is a sample of the cluster of $0$ under $\phi [  .\, | \, X_n \neq \dagger ]$.
	The terminology $\tilde \sfC$ will be used to denote a random sample of the cluster of 0, defined with a sequential procedure starting from equation~\eqref{eq:tilde_C_geqs0}, 	and the law of which will be identified with the ``true'' distribution of $\sfC$. 
	
	Fix $k\geq 1$ and assume the pieces $\sfD_1,\dots, \sfD_k$ already defined.
 	Write $\zeta$ for the realisation of~$\sfC_{\leq S_k}$ and assume that $S_k < \infty$ (otherwise the sampling procedure is finished). 
	We now describe how to sample~$\sfD_{k+1}$ conditionally on $\zeta = \sfC_{\leq S_k}$.	
	For simplicity write~$s := S_k$; the value of~$S_k$ is determined by the conditioning, so may be treated as a constant. 
	
	Proposition~\ref{prop:pre-renewal_is_renewal} states that the information on $\sfC_{\leq s}$ needed to sample the future
	is limited to a random number $j$ of past steps, which is to say $\sfC_{s-j \leq \cdot\leq s}$, with $j$ having exponential tails. 
	Our first task is to formalise this. 
	
	For $j > 0$ and~$\chi$ a potential realisation of~$\sfC_{\geq s} - (Ls,X_{s})$ contained in~$\calY - (\frac{jL}2,0)$ set
	\begin{align*}
		q_j (\zeta, \chi)
		= \min_\xi \phi \big[\sfC_{\geq s} = \chi + (Ls, X_s)  \, \big| \, \sfC_{\leq s} = \xi, \, X_n \neq \dagger \big],
	\end{align*}
	where the minimum is taken over all possible realisations $\xi$ of $\sfC_{\leq s}$ with $\xi_{s-j \leq \cdot\leq s} = \zeta_{s-j \leq \cdot\leq s}$. 
	For $j>s$ we simply take $\xi = \zeta$; but the cone condition on $\chi$ still depends on $j$. 
	For all futures $\chi$ not contained in the appropriate cones, we set $q_j (\zeta, \chi) = 0$. 
	
	When $j = 0$, $q_0(\zeta, \chi) = q_0(\chi)$ requires a special definition. 
	For any well-behaved cluster-future~$\chi$ set
	\begin{align}
		q_0 (\chi)
		= \min_\xi \phi \big[\sfC_{\geq s+1/2} = \chi + (Ls, X_s),  \, \sfC_{\geq s} \subset \calY + (L(s-1),X_s)  \, \big| \, \sfC_{\leq s} = \xi, \, X_n \neq \dagger \big],
	\end{align}
	where the minimum is over all potential realisations $\xi$ of $\sfC_{\leq s}$ producing $N_{s} = 1$. For all other cluster futures $\chi$ set $q_0(\zeta,\chi) = 0$.

	Also, define  
	\begin{align}
		Z_j(\zeta) := \sum_{\chi} q_j (\zeta, \chi),
	\end{align}
	where the sum is over all potential realisation $\chi$ of~$\sfC_{\geq s} - (Ls,X_s)$ (or all cluster-futures when $j = 0$). 
	By definition, the quantities~$q_j (\zeta, \chi)$ and~$Z_j(\zeta)$ are increasing in~$j \geq1$.

	The combination of Propositions~\ref{prop:pre-renewal_is_renewal} and~\ref{prop:cone_contained} states that, uniformly in $\zeta$ as above 
	\begin{align}\label{eq:exp_tails23}
		Z_0(\zeta) \ges 1 \quad \text{ and }\quad  1 - Z_j(\zeta) \les \e^{-\eta j}.
	\end{align}
	Define a random variable~$\tilde M_k$ taking values in~$\mathbb N$ with 
	\begin{align}
		\bbP[\tilde M_k \leq j\,|\, \sfC_{\leq s} = \zeta] = Z_j(\zeta). 
	\end{align}
	Sample~$\tilde M_k$, then, conditionally on~$\tilde M_k$ we will sample a potential future for $\sfC$ as follows. 
	
	If $\tilde M_k= 0$ set 
	\begin{align}\label{eq:tilde_C_geqs0}
		\bbP\big[\tilde \sfC_{\geq s + 1/2} = \chi  + (Ls, X_s) \,\big|\, \sfC_{\leq s}= \zeta,\, \tilde M_k= 0\big] 
		= \tfrac{1}{Z_0(\zeta)}  q_0 (\zeta, \chi),
	\end{align}
	for any possible cluster-future $\chi$.
	Then sample $\tilde \sfC_{s \leq \cdot \leq s+1/2}$ according to the conditional measure 
	\begin{align}
	\phi[\, \cdot \, |\,  \sfC_{\leq s} = \zeta, \,\sfC_{\geq s+1/2} = \tilde \sfC_{\geq s + 1/2}  \text{ and }  \sfC_{\geq s} \subset \calY -  (L(s-1), X_s)] .
	\end{align}
	Note here that $\sfC_{\geq s+1/2}$ is sampled independently of $\sfC_{\leq s}$, but that the link is allowed to depend on it.
	
	For $j\geq 1$ and $\chi$ a potential realisation of $\sfC_{\geq s} - (Ls, X_s)$, define $\bbP[\tilde \sfC_{\geq s} = \chi + (Ls, X_s)  \,|\, \sfC_{\leq s},\, \tilde M_k= j]$ inductively by
	\begin{align}
		&\bbP\big[\tilde \sfC_{\geq s} = \chi + (Ls, X_s)  \,\big|\, \sfC_{\leq s}=\zeta,\, \tilde M_k= j\big] \\
		&\qquad= \tfrac{1}{\bbP[\tilde M_k= j  \,|\, \sfC_{\leq s} = \zeta] } \Big( q_j (\zeta, \chi) - \bbP\big[\tilde \sfC_{\geq s} = \chi + (Ls, X_s)   \text{ and } \tilde M_k  < j \,\big|\, \sfC_{\leq s} \big] \Big).
		\label{eq:tilde_C_geqs3}
	\end{align}
	
	It is immediate from the construction above that the sampling of $\tilde \sfC_{\geq s}$ follows the law of $\sfC_{\geq s}$ under $\phi[\, \cdot \, |\,  \sfC_{\leq s} = \zeta]$.	

	If $\tilde \sfC_{\geq t}$ contains a pre-renewal, set~$\sfD_{k+1} = \sfD_1(\tilde \sfC_{\geq t})- (Ls, X_s) $, which is to say the piece of~$\tilde \sfC_{\geq t}$ up to its first pre-renewal. 	
	Otherwise, set $\sfD_{k+1} = \tilde \sfC_{\geq s} - (Ls, X_s)$ and set $S_{k+1} = \infty$; the sampling of the sequence $(\sfD_j)_{j\geq 1}$ is  finished. 
	
	At this stage, we have constructed $\sfC$ as a concatenation of pieces $(\sfD_k)_{k = 0,\dots, K}$.
	The sequence $(X_t)_{t\geq 0}$ is implicitly defined, with $X_1,\dots, X_{S_k}$ depending on $\sfD_1,\dots, \sfD_k$ only. 
	Note that each piece $\sfD_{k+1}$ needs to be translated by $(L S_k,X_{S_k})$ when attached to $\sfC_{\leq S_k}$. 

	Write $ {\rm len}(\sfD_{k})$ is the horizontal ``length'' of $\sfD_{k}$, that is the maximal $t$ for which $\sfD_{k}$ intersects $\calH_{\geq S_{k} + t}$
	--- unless $\sfD_{k}$ is the last step of the process, this is equal to $S_{k + 1} - S_{k}$. 
	For $k >K$ we formally set $\sfD_k = \emptyset$ and  $ {\rm len}(\sfD_{k}) = 0$.
	
	Then, Proposition~\ref{prop:pre-renewal_density} states the existence of a universal constant $c  >0$ such that 
	\begin{align}
	    \bbP \big[{\rm len}(\sfD_{k})  \geq r \,\big|\, \sfC_{\leq S_k} \big] &\leq \exp(-cr) \label{eq:pre-renewal_density6}
	\end{align}
	for all $k,r, n  \geq 0$.
	\medskip 
	
	\noindent {\bf Memory variables and filtration.}
	Heuristically, the random variables $\tilde M_k$ represent ``memory variables''. 
	They will correspond to how far it is needed to look back in the past to sample the piece-to-be-added $D_k$.
	In particular, according to this heuristic, the condition for $k$ to be a renewal time is that $\tilde M_{k+j} \leq jj$ for all $j\geq 0$. 
	We make this precise in what follows.

	By~\eqref{eq:exp_tails23}, 
	\begin{align}\label{eq:exp_tails24}
		\bbP[\tilde M_k> j\,|\,\sfC_{\leq t}] \leq \exp(-\eta j), 
	\end{align}
	for all~$j\geq 0$ and some universal constant $\eta > 0$.
	We may always decrease $\eta$, so we will henceforth assume that $0<\eta \leq c$, where $c>0$ is the constant appearing in \eqref{eq:pre-renewal_density6}.
	
	By \eqref{eq:exp_tails24}, we may bound the variables~$\tilde M_k$ from above by i.i.d. modified geometric variables\footnote{That is, variables $M_k$ with $\bbP[M_k \geq j] = \e^{-\eta j}$ for $j\geq 0$.}~$M_k$ with parameter~$1- \e^{-\eta} >0$. To be precise, at each sampling step $S_k$, sample a geometric variable $M_{k+1}$ starting at $0$, independent of $\sfC_{\leq t}$, then sample $\tilde M_{k+1} \leq M_{k+1}$
	and $\sfD_{k+1}$ according to the procedure described above. 
	
	Set  $Y_t = 1$ if 
	$t = S_k$ for some $k$ and $M_{k+j} \leq j$ for all $j\geq0$.
	We have now defined the sequences $(X_t)_{t\geq 0}$ and $(Y_t)_{t\geq 0}$.
	Observe that $Y_t$ depends on the ``future'' variables $M_{k+ j}$ (if $t$ is such that $S_k = t$). 

	To render dependencies clear, let us properly define the filtration $(\calF_t)_{t\geq 0}$ associated to the KMRP process. 
	For $t\geq 0$, let $\calF_t$ be the $\sigma$-algebra generated by $\sfC_{\leq t}$, 
	all variables $M_k$ and $\tilde M_k$ with $S_k < t$ 
	and all variables $Y_s$ with $s \leq t$. 
	
	Finally, the dependence between the memory variables $M_k$ and the piece-lengths ${\rm len}(\sfD_{k+1})$ will be of interest. 
	Notice that, due to the relationship between $\eta$ in \eqref{eq:exp_tails24} and $c$ in  \eqref{eq:pre-renewal_density6},
	for any $k,r,j \geq 0$,
\begin{align}
	   \bbP \big[{\rm len}(\sfD_{k+1})  \geq r +j  \,\big|\, \sfC_{\leq S_k}, \, M_{k} =  j \big]
		&\leq   \frac{\bbP [{\rm len}(\sfD_{k+1})  \geq r +j  \,\big|\, \sfC_{\leq S_k}]}{\bbP [ M_{k} = j] }\\
			&= 	    \frac{\bbP [{\rm len}(\sfD_{k+1})  \geq r +j  \,\big|\, \calF_{S_k}]}{\bbP [ M_{k} = j] }
	    \les  \e^{-cr}.
	     \label{eq:pre-renewal_density7}
	\end{align}
	The equality above comes from the fact that the additional conditioning on $\calF_k$ has no bearing on $\sfD_{k+1}$. 
	It follows that the variables $({\rm len}(\sfD_{k+1})  - M_{k})_{k\geq 0}$ may be bounded by i.i.d. geometric variable $({\rm ExtraLen}_k)_{k\geq0}$ of some universal parameter, which are also independent of the variables $(M_k)_{k\geq 0}$ 
	\medskip 
	
	\noindent{\bf Renewal structure.}
	We now argue that a time $t$ such that $Y_t = 1$ is indeed a renewal time. 
	Fix $t$ such that $t=  S_k$ and $Y_t=  1$. Then, under $\calF_t$, the variables $(M_{k+j})_{j \geq 0}$ are independent geometric variables conditioned on $M_{k+j} \leq j$ for each $j$. 
	In particular, they are independent of $\calF_t$, and therefore so is the sequence $(Y_s)_{s \geq t}$. 
	
	Additionally, $\sfD_{k+1}$ only depends on $\calF_t$ via its link 
	and $\sfD_{k+j}$ is independent of~$\sfC_{\leq S_{k+j} - j-1}$ and in particular of~$\sfC_{\leq S_{k}+1}$ (here we use that $S_{j+1} - S_j \geq 2$ for all $j$). 
	It follows that $\sfC_{\geq S_k+1/2}$ is independent of $\calF_t$. 
	As a consequence $(X_{k+j}- X_k)_{j\geq 1}$ is independent of $\sfC_{\leq k}$. 
	
	The law $\mathcal L$ appearing in the definition of a KMRP 
	is explicitly constructed when sampling $\tilde \sfC_{\geq s}$ in~\eqref{eq:tilde_C_geqs0} and~\eqref{eq:tilde_C_geqs3} 
	under the unconditioned measure $\phi$.
	\medskip

\noindent {\bf Exponential tails and mass-gap.}
	We now discuss the exponential tails of \eqref{eq:KMRP_exp_tail1} and  \eqref{eq:KMRP_exp_tail2}; those of 
	\eqref{eq:KMRP_exp_tail_init1} and \eqref{eq:KMRP_exp_tail_init2} will be discussed below. 
	Recall the definition of the times $T_k$ defined from the $Y_k$'s in Definition~\ref{def:KMRP}. 
	
	Fix $\ell \geq 1$ , condition on $\calF_{T_\ell}$ so that $T_\ell < \infty$ and let $k \geq \ell$ be the index such that $S_k = T_\ell$. 
	Due to our construction, the whole of $\sfC_{\geq T_\ell + \frac12}$ is contained in the cone $\calY + (LS_k ,X_{S_k})$, which implies \eqref{eq:KMRP_exp_tail2}.  	
	We turn to~\eqref{eq:KMRP_exp_tail1}.

	Define $\kappa > k$ to be the first index for which $M_{\kappa+j} \leq j$ for all $j\geq0$. 
	In other words, $\kappa$ is such that $T_{\ell + 1} = S_\kappa$. 
	Then, for $m\geq 1$,
	\begin{align}\label{eq:exp_tails_mem0}
		\PP\big[ X_{m + T_{\ell}} \neq \dagger \text{ but }  T_{\ell+1} - T_{\ell}  > m \,\big|\, \calF_{T_{\ell}} \big] 
		= \PP\big[ \sum_{j= k+1}^{\kappa} {\rm len}(\sfD_{j}) > m  \,\big|\, \calF_{T_{\ell}} \big].
	\end{align}
	We will now argue that $\kappa-k$ has exponential tails under $\PP[ . \,|\, \calF_{T_{\ell}} ]$
	and that the quantity above is exponentially decreasing in $m$. 
	
	The tails for $\kappa$ are relatively standard and appear in \cite{OttVelenikPottsDefect}. 
	We give a full proof for completeness. 
		
	We say a memory variable $M_i$ reaches an index $j < i$ if $i -M_i \leq j$. 
	With this formulation, $\kappa > k$ is the first index such that none of the $M_i$ with $i\geq \kappa$ reaches indices strictly smaller than $\kappa$. 
	
 	Set $J_0 = k$ and, for $i \geq 0$, let 
	\begin{align}
	J_{i+1} = \max\{ j > J_i:  j - M_j \leq  J_i \}
	\end{align}
	to be the index of the last memory variable reaching $J_i$.
	The above is well defined up to the first $i$ for which the set is empty. Then $\kappa= J_i+1$ and we formally define $J_{i+1} = \emptyset$.
	
	Notice that, for all $i\geq 0$
	\begin{align}
	\PP\big[ J_{i+1} = \emptyset \,\big|\, \calF_{T_{\ell}}, J_1,\dots, J_i \big] 	
	= \prod_{j \geq 1} \PP\big[ M_{j+J_i} < j \,\big|\, \calF_{T_{\ell}}, J_1,\dots, J_i \big]  
	> 0. 
	\end{align}
	Indeed, the variables $(M_{j+J_i})_{j\geq1 }$ under the measure above are independent geometrics,	each conditioned not to reach $J_{i-1}$ (nor $J_0-1$).
	As such the probability above may be computed explicitly and be shown to be uniformly positive. 
	
	The same computation shows that 
	\begin{align}
	\PP\big[ J_{i+1} -J_i \geq m \,\big|\, \calF_{T_{\ell}}, J_1,\dots, J_i \big] 	
	\leq \sum_{j \geq m} \PP\big[ M_{j+J_i} \geq j \,\big|\, \calF_{T_{\ell}}, J_1,\dots, J_i \big]  
	\les \e^{- \eta m}.
	\end{align}
	
	Thus, $\kappa  - k - 1$ is the sum of variables $(J_{i+1} - J_i)_{i\geq 0}$ which have exponential tails, up to the first time $J_{i+1} = \emptyset$. 
	It follows that $\kappa  - k$ also has exponential tails. 
	
	Conditionally on $J_1,J_2,\dots$, the variables $(M_j)_{j > k}$ are independent, but have certain conditionings. 
	Indeed, each variable $M_{J_i}$ is conditioned to be larger than $J_{i} - J_{i-1}$, but strictly smaller than $J_{i} - J_{i-2}$, 
	while all other memory variables are bounded above. 
	We conclude that, under $\PP[ . | \calF_{T_{\ell}}]$, 
	 \begin{align}\label{eq:exp_tails_mem1}
		 \sum_{j=k+1}^\kappa M_{j-1} - (\kappa - k) = \sum_{j=k+1}^\kappa M_{j-1} - \sum_{i\geq1}  (J_{i} - J_{i-1})
	 \end{align}
	 has exponential tails. 
	Indeed, the above is the sum of $(\kappa - k)$ random variables which may be bounded by i.i.d. geometrics independent of $(\kappa - k)$. 
	
	Finally, by \eqref{eq:pre-renewal_density7}, we conclude that, under  $\PP[ . | \calF_{T_{\ell}}]$, 
	\begin{align}\label{eq:exp_tails_mem2}
		\sum_{j= k+1}^{\kappa} ({\rm len}(\sfD_{j})  - M_{j-1})
	\end{align}
	also has exponential tails. 
	
	Inserting the exponential tails for $\kappa - k$, \eqref{eq:exp_tails_mem1} and \eqref{eq:exp_tails_mem2} into \eqref{eq:exp_tails_mem0}
	we conclude the existence of a universal constant $c > 0$ such that 
	\begin{align}
		\PP\big[ \sum_{j= k+1}^{\kappa} {\rm len}(\sfD_{j}) > m  \,\big|\, \calF_{T_{\ell}} \big] \leq \e^{-cm} \quad \text{ for all $m\geq 1$}. 
	\end{align}
	\medskip
	
	\noindent{\bf Initial step.}
	It remains to prove~\eqref{equ: first step survival rate} as well as the exponential bounds \eqref{eq:KMRP_exp_tail_init1} and \eqref{eq:KMRP_exp_tail_init2}  for the initial step. 
	It is a direct consequence of~\eqref{eq:one_arm}	
	\begin{align}
		\phi_p[X_1 \neq \dagger] =	\phi_p[ 0 \lra \partial \calH_{\geq 1}]\les \pi_1(L(p)).
	\end{align}
	Conversely, \eqref{eq:RSWnc} and~\eqref{eq:one_arm} allow us to prove that 
	\begin{align}
		\phi_p[X_1 \neq \dagger \text{ and } N_1 =1 ] \ges  \pi_1(L(p)).
	\end{align}
	Then, by the properties of the memory variables, 
	\begin{align}
		\phi_p[T_1 < \infty] \geq \phi_p[T_1 = 2] \ges \phi_p[X_1 \neq \dagger \text{ and } N_1 =2] \ges  \pi_1(L(p)).
	\end{align}
	We recall that we enforced that $S_{k+1} - S_k \geq 2$, which is the reason for which we consider the event $T_1=2$ in the previous display. 
	This proves \eqref{equ: first step survival rate}.
	
	Proposition~\ref{prop:cone_contained} implies an exponential tail for $\min \{ {j\geq0} : \sfC \subset Y - (Lj,0)\}$ when $\sfC$ is sampled under $\phi_p[\,\cdot\,|\, 0 \lra \partial \calH_{\geq n}]$,
	whence \eqref{eq:KMRP_exp_tail_init2} follows readily. 
	
	Finally,~\eqref{eq:pre-renewal_density2} states that, under $\phi_p[\,\cdot\,|\, 0 \lra \partial \calH_{\geq n}]$, $S_1$ has a uniform exponential tail. 
	The rest of the proof of~\eqref{eq:KMRP_exp_tail_init1} and is identical to that of \eqref{eq:KMRP_exp_tail1} above.
	\medskip
	
	\noindent{\bf Killing rate and step variance.}
	From~\eqref{eq:RSWnc} we conclude directly that, for any time $T_k$ with $k\geq 1$, 
	\begin{align}
	\phi[0 \lra \calH_{\geq T_k+1}\,| \, \calF_{T_k}] \leq 1 - c < 1,
	\end{align}
	for some universal constant $c > 0$. This implies that the killing rate is bounded away from $0$ uniformly in all parameters. 
	Furthermore, a quick analysis shows that
	\begin{align}
	\phi[T_{k+1} = T_{k}+2     \text{ and }  X_{T_{k+1}} \geq X_{T_{k}}+1 \,| \, \calF_{T_k}  ] \ges 1, 
	\end{align}
	which implies a uniform lower bound on the $X$-step variance and a uniform upper bound on the killing rate. 
	\medskip
	
	\noindent{\bf Aperiodicity.} Recall that our construction formally prevents us to have $T_{k+1} = T_{k}+1$. However, direct \eqref{eq:RSWnc} constructions prove that 
	\begin{align}
	\phi[T_{k+1} = T_{k}+2  \,| \, \calF_{T_k}] \ges 1 \quad  \text{ and } \quad
		\phi[T_{k+1} = T_{k}+3  \,| \, \calF_{T_k} ] \ges 1,
	\end{align}
	which implies the aperiodicity of the process. 
	 \hfill $\square$

\section{Local limit theorem for the Markov renewal process}\label{sec:KMRP_consequences}

This section contains general results about killed Markov renewal processes formulated for the process $(X_t,Y_t)_{t\geq 1}$ of Theorem~\ref{thm:killed_renewal_structure}. 
While the details of this specific process are not important, we find it easier to formulate the results in this context. 
Even though the notion of Markov renewal process (or compound Markov processes) is classical and has been thoroughly studied, see for instance the monograph~\cite{Borovkov_2022}, the addition of the killing part changes the phenomenology, and we need to reprove a certain number of results.
In particular the hypothesis of the mass gap is fundamental and cannot be bypassed. 

\subsection{Probability of hitting a half-space}

First, we show that the probability that a KMRP with a mass-gap survives for $n$ steps behaves like a pure exponential. We highlight that this result is purely stated for KMRPs. 
In particular, its proof is not based on percolation arguments but rather on analytic ones, and is somewhat orthogonal to the rest of the paper. 

\begin{Th}\label{th:pure_expKMRP}
	Let $(X_t,Y_t)_{t\geq 1}$ be an aperiodic KMRP with a mass-gap and killing rate $\kappa > 0$. Then there exists $\zeta \leq \kappa^{-1}$ such that 
	 \begin{equation}\label{eq:pure_expKMRP}
		\mathbb P[X_n \neq \dagger] \asymp \mathbb P[X_1 \neq \dagger] \, \e^{- n/\zeta} \qquad \text{ for all $n\geq 1$},
	\end{equation}
	where the constants in the equivalence and $\zeta$ are bounded away from $0$ and $\infty$ depending only on the killing rate and the mass-gap. 
\end{Th}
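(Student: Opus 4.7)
The plan is to set up a renewal equation for the survival probability $S_n := \mathbb P[X_n \neq \dagger]$, locate its dominant singularity via a Laplace tilt, and extract the exponential asymptotics through a classical renewal theorem.

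First I would introduce the generating functions. Let $p_k := \mathbb P[T_2 - T_1 = k,\, T_2 < \infty \mid T_1 < \infty]$, so that $\sum_k p_k = 1 - \kappa$; let $a_k := \mathbb P[T_1 = k,\, T_1 < \infty]$; and let $r_m$, $b_m$ denote the regenerative and initial no-renewal survival probabilities, respectively. Write $P, A, R, B$ for their generating functions. Partitioning $\{X_n \neq \dagger\}$ according to the index of the last renewal before time $n$ and invoking the regeneration property at the renewal times produces the identity
\begin{equation*}
S(z) = B(z) + \frac{A(z)\, R(z)}{1 - P(z)}.
\end{equation*}
The exponential-tail assumptions make all four series analytic in a disk $|z| < 1 + c_0$, and the mass-gap strengthens this so that $B$ and $R$ remain analytic on a strictly larger disk than where $1 - P$ vanishes.

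Next I would locate the dominant pole. On the positive real axis, $P$ is smooth, strictly increasing and convex, with $P(0) = 0$ and $P(1) = 1 - \kappa < 1$; by mass-gap, the radii of convergence of $B$ and $R$ strictly exceed the first real point where $P = 1$, so there exists $z^* > 1$ with $P(z^*) = 1$ in the interior of $P$'s domain of analyticity, at which $P'(z^*) > 0$. I then set $\zeta := 1/\log z^*$. Aperiodicity yields a Wiener-type bound $|P(z)| < 1$ on the circle $|z| = z^*$ away from $z^*$ itself, so $z^*$ is the unique pole of $S$ on its dominating circle and is simple. Uniform bounds on $\zeta$ in terms of $\kappa$ and the mass-gap constants follow from uniform analytic control on $P$ and its derivative at $z^*$.

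Finally, asymptotics follow either by a contour-deformation argument around $|z| = z^* + \eta$ that isolates the residue at $z^*$, or equivalently by exponential tilting: the measure $\tilde p_k := p_k\, z^{*k}$ is an aperiodic probability distribution on $\mathbb N^*$ with exponential tails and finite mean $\mu^* = z^* P'(z^*)$, to which Blackwell's renewal theorem applies. This yields $u_n := \sum_{k \geq 1} \mathbb P[T_k = n,\, T_k < \infty] \asymp z^{*-n}/\mu^*$; convolving with $r$ and combining with the initial kernel gives
\begin{equation*}
S_n \asymp \frac{A(z^*)\, R(z^*)}{\mu^*}\, z^{*-n},
\end{equation*}
with $A(z^*) \asymp a_1 \asymp \mathbb P[X_1 \neq \dagger]$ by the initial-step tail bound, and $R(z^*)$, $\mu^*$ controlled by the mass-gap constants alone. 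The main obstacle is the analytic gap in step two: turning the conditional mass-gap statement of Definition~\ref{def:KMRP} — a probabilistic property about survival conditioned on distant-future events — into the analytic fact that $R$ and $B$ continue strictly past $z^*$. Once this separation is secured, the remainder is classical renewal/Tauberian machinery, and the crude bound $\zeta \leq \kappa^{-1}$ comes from a separate elementary estimate controlling $z^*$ in terms of $\kappa$.
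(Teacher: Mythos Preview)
Your proposal is correct and follows essentially the same approach as the paper: set up the renewal equation, locate the unique dominant root $z^*$ of $P(z)=1$ (the paper's $A(R_p)=1$) via positivity and aperiodicity, use the mass-gap to ensure the tail series $B,R$ (the paper's $c_n$) extend past $z^*$, and extract pure exponential asymptotics. The only cosmetic difference is in the final extraction step --- you invoke Blackwell's theorem after exponential tilting (or a contour argument), whereas the paper uses Wiener's $1/f$ theorem together with Abel's radial theorem; both are standard and equivalent here.
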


Expressed in terms of the random-cluster model, Theorem~\ref{th:pure_expKMRP} states the following.

\begin{Cor}\label{cor:pure_exp}
	For any~$\vec w \in \bbS^1$ and~$p<p_c$, there exists $\zeta(p,\vec w) > 0$ such that
	\begin{equation}\label{eq:pi1pure_exp}
    	\phi_p[0\lra \calH_{\geq n}^{\vec w}] \asymp \pi_1(L(p))\, \e^{-\frac{n}{\zeta(p,\vec w)L(p)}},
	\end{equation}
	uniformly in~$n \geq L(p)$,~$p$ and~$\vec w$. Moreover, $\zeta$ is bounded uniformly away from $0$ and $\infty$. 
\end{Cor}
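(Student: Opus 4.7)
The statement is a direct translation of Theorem~\ref{th:pure_expKMRP} to the random-cluster setting, via the process constructed in Theorem~\ref{thm:killed_renewal_structure}. The plan is as follows. Fix $\vec w \in \bbS^1$ and $p<p_c$, and let $(X_t/L(p),Y_t)_{t\geq 1}$ be the KMRP associated with the exploration of the cluster of $0$ in direction $\vec w$, as provided by Theorem~\ref{thm:killed_renewal_structure}. That theorem guarantees that this process is aperiodic, possesses a mass-gap, and has killing rate bounded uniformly away from both $0$ and $1$, all of these constants being uniform in $p$ and $\vec w$. Theorem~\ref{th:pure_expKMRP} therefore applies, yielding some $\zeta = \zeta(p,\vec w)$, uniformly bounded away from $0$ and $\infty$, such that
\begin{equation}\label{eq:plan_pkmrp}
\phi_p[X_k \neq \dagger] \asymp \phi_p[X_1 \neq \dagger]\, \e^{-k/\zeta(p,\vec w)} \qquad \text{for every integer }k\geq 1.
\end{equation}

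It remains to translate~\eqref{eq:plan_pkmrp} into the claimed bound on $\phi_p[0 \lra \calH_{\geq n}^{\vec w}]$. By the definition~\eqref{eq:X_t_def} of $X_t$, the event $\{X_k\neq\dagger\}$ is precisely the event that the cluster of the origin reaches the hyperplane at signed distance $k\,L(p)$ in the direction $\vec w$, i.e.\ $\{0\lra \calH_{\geq k}^{\vec w}\}$ in the Section~\ref{section: near-critical RSW theory} convention. Combined with the estimate $\phi_p[X_1 \neq \dagger] \asymp \pi_1(L(p))$ of~\eqref{equ: first step survival rate}, inserting these identifications into~\eqref{eq:plan_pkmrp} yields~\eqref{eq:pi1pure_exp} at every integer multiple of $L(p)$. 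For a general $n \geq L(p)$, I will take $k = \lfloor n/L(p)\rfloor \geq 1$ and sandwich $\phi_p[0 \lra \calH_{\geq n}^{\vec w}]$ between~\eqref{eq:plan_pkmrp} evaluated at $k$ and at $k+1$. Since $n/L(p) - k \in [0,1)$ and $\zeta(p,\vec w)$ is bounded away from both $0$ and $\infty$, the gap between $\e^{-k/\zeta}$ and $\e^{-(k+1)/\zeta}$ is a uniformly bounded factor, and both sandwich bounds are uniformly equivalent to $\e^{-n/(\zeta(p,\vec w)L(p))}$. This completes the proof.

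No genuine new difficulty arises in this corollary: the probabilistic heavy lifting is carried by Theorem~\ref{th:pure_expKMRP}, which is proved in Section~\ref{sec:KMRP_consequences} by purely analytic means, while the model-dependent analysis of the cluster geometry is the content of Theorem~\ref{thm:killed_renewal_structure}. The corollary is merely the dictionary between the two.
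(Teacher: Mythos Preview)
Your proof is correct and follows essentially the same route as the paper: apply Theorem~\ref{th:pure_expKMRP} to the KMRP of Theorem~\ref{thm:killed_renewal_structure}, use~\eqref{equ: first step survival rate} to identify the initial survival probability with $\pi_1(L(p))$, and then sandwich $\phi_p[0\lra \calH_{\geq n}^{\vec w}]$ between the survival probabilities at $\lfloor n/L(p)\rfloor$ and $\lfloor n/L(p)\rfloor+1$. The paper's proof is identical in structure, only more terse.
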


The above does not imply that~$\xi_p(\vec w) =  \zeta(p,\vec w)L(p)$, since~$\xi_p(\vec w)$ is not defined in terms of hitting a half-space, but rather a specific point. 
We will see how to deduce~$\xi_p(\vec v)$ in the next section. 

\begin{proof}[Proof of Theorem~\ref{th:pure_expKMRP}]
When studying $\mathbb P[X_n \neq \dagger]$, the first step needs to be treated with particular care.
Indeed, 
\begin{align}\label{eq:decomp1}
	\mathbb P[X_n \neq \dagger] 
	&= \sum_{1 \leq k\leq n} \mathbb P[T_1  = k ] \mathbb P[X_{n } \neq \dagger \,|\, \calF_{k}\, , T_1  = k ] +\mathbb P[T_1 > n \text{ but } X_n \neq \dagger] \\
	&= \sum_{1 \leq k\leq n} \mathbb P[T_1  = k ] \mathbb P[X_{n} \neq \dagger \,|\, \calF_{k}\, , T_1  = k ] + O(e^{-c  n} 	\mathbb P[X_n \neq \dagger]),
\end{align}
where the second line is due to the mass-gap $c  > 0$. 
Furthermore, also due to the mass-gap, $\mathbb P[T_1  = k ] = O(e^{-c  k} 	\mathbb P[X_k \neq \dagger])$.

The main analysis is dedicated to 
\begin{align}
	s_{n-k} := \mathbb P[X_{n} \neq \dagger \,|\, \calF_{k},\, T_1  = k ],
\end{align}
which is a quantity that depends only on $n-k$ and the law $\mathcal L$ of Definition~\ref{def:KMRP}.
In the previous definition, it is a minor inconvenience that $T_1$ is not allowed to be 0 or 1; we arbitrarily set $s_n = s_{n-1} = s_{n-2}$.

For $n\geq 1$, define the auxiliary quantities 
\begin{align}
	a_n := &\PP\big[T_{2} - T_1 = n \,\big|\, \calF_{T_1},\, T_1 < \infty\big] \\
	c_n := &\PP\big[X_{n+ T_1} \neq \dagger \text{ but }T_2 > n+ T_1 \,\big|\, \calF_{T_1},\, T_1 < \infty\big]  \quad \text{ and }\qquad\\
	p_n := &\tfrac{1}{\kappa}\PP\big[\exists \ell \geq 0, T_{\ell} - T_1 = n \text{ and } T_{\ell+1} = \infty \,\big|\, \calF_{T_1},\, T_1 < \infty\big] \\
	=& \PP\big[\exists \ell \geq 0, T_{\ell} - T_1 = n \,\big|\, \calF_{T_1},\, T_1 < \infty\big], 
\end{align}
where we recall that $\kappa$ denotes the killing rate. 
The equality is due to the renewal property. 

By analysing the value of the last renewal time, we find 
\begin{align}\label{eq:decomp2}
	s_n  = \sum_{1 \leq k \leq n}  p_{k} c_{n-k} + c_n =  \sum_{1 \leq k \leq n}  p_{k} c_{n-k} + O(e^{-c  n}  s_n). 
\end{align}
due to the mass-gap.

We will prove that there exists $\zeta > 0$ such that
\begin{align}\label{eq:p_n}
	p_n \asymp \e^{-n/\zeta}
\end{align}
for $n$ large enough. Before doing so, observe that \eqref{eq:p_n} combined with~\eqref{eq:decomp1},~\eqref{eq:decomp2} and the consequences of the mass-gap, yields
\begin{equation}\label{eq:pn_sufficient_to_conclude}
s_n \asymp \e^{-n/\zeta} \qquad \text{ and }\qquad 	\mathbb P[X_n \neq \dagger]  \asymp \mathbb P[X_1  \neq \dagger ] \e^{-n/\zeta}.
\end{equation}

The rest of the proof is dedicated to~\eqref{eq:p_n}. 
By analysing the first renewal time, we find 
\begin{align}\label{eq:p_a_relation}
	p_n = \sum_{k=1}^n a_k p_{n-k} + \1_{n=0}.
\end{align}
Denote by~$P$ and~$A$ the generating series of the sequences~$(p_n)$ and~$(a_n)$, respectively.
Note that these are both power series with positive coefficients.
Writing $R_p$ and $R_a$ for their respective radii of convergence, the mass-gap states that $R_p <  R_a$. 
From \eqref{eq:p_a_relation} we deduce the ``killed renewal equation'' 
\begin{equation}\label{eq: renewal equation}
	P(z) = \frac{1}{1-A(z)}.
\end{equation}
The equality holds in the whole disk of convergence of~$P$.
     
Observe that~$A$ is a power series with positive coefficients that satisfies~$A(1) = 1 - \kappa < 1$. 
Also observe that, due to~\eqref{eq: renewal equation}, it is the case that $A(R_p)= 1$. 
    
Write $\mathbb{D}_{R_p} \subset \mathbb{C}$ for the open disc centered at 0 of radius $R_p$ . We now are going to argue that $R_p$ is the only 0 in~$\Bar{\mathbb D}_{R_p}$ of the series~$A - 1$ and that it is simple. 
Indeed, it easy to see that if~$|z|<1$, then 
\begin{equation*}
        \big\vert \sum_{n \geq 1} a_n (R_p z)^n \big\vert\leq  \sum_{n \geq 1} a_n (R_p |z|)^n < 1. 
\end{equation*}
Furthermore, we argue that the aperiodicity of the process implies that~$A - 1$ does not have an additional 0 on~$\partial \Bar{\mathbb D}_{R_p} \setminus \{R_p\}$. 
Indeed, if it were the case, then there would exist some~$\theta \in (0, 2\pi)$ such that 
~$\sum_{k=0}^{+\infty} a_k R_p^k \e^{\mathrm ik\theta} = 1$. 
By the equality case in the triangular inequality, one would then have that for all the~$k \in \mathrm{Supp}(T_2-T_1)$, the~$\e^{\mathrm i k\theta}$ are aligned; this contradicts the aperiodicity of~$T$.
Finally, since $R_p < R_a$,~$A$ has a positive derivative at $R_p$, which implies that $R_p$ is a simple zero of $A-1$. 

Summarising the above, we proved that the function~$$g(z) := \frac{1-A(z)}{R_p-z}$$ does not vanish on~$\Bar{\mathbb D}_{R_p}$. 
We wish to apply Wiener's~$1/f$ theorem (see~\cite[Theorem 5.2]{TirgonometricseriesZygmund}) to the function~$g$. 
To that end, we need to check that the corresponding series is summable at $R_p$. It is a simple observation that, for all $|z| < R_p$, 
\begin{equation*}
        g(z) = \sum_{n= 0}^{+\infty} \Big(\frac{z}{R_p}\Big)^n\Big(1 - \sum_{k=1}^{n}R_p^k\, a_k\Big) = \sum_{n= 0}^{+\infty} \Big(\frac{z}{R_p}\Big)^n\Big(\sum_{k=n+1}^{+\infty}R_p^k\, a_k\Big).
\end{equation*}
Due to the positivity of the terms $a_k$, when inserting $z = R_p$ in the above, we find
 \begin{equation*}
 \sum_{n=0}^{+\infty} \sum_{k=n+1}^{+\infty}R_p^k\, a_k = \sum_{k=1}^{+\infty} R_p^k\, ka_k = R_p\,  A'(R_p) <\infty.
 \end{equation*}
 
By Wiener's~$1/f$ theorem~\cite[Theorem 5.2]{TirgonometricseriesZygmund},~$1/g$ may be expanded as a power series~$\sum_n b_n z^n$ in~$\Bar{\mathbb D}_{R_p}$ 
and this series is absolutely summable at the point $z=R_p$. %, that is~$\sum_n |b_n |R_p^n < \infty$.
By~\eqref{eq: renewal equation}, for any~$z \in {\mathbb D}_{R_p}$,
 \begin{equation*}
     \frac{1}{g(z)} =  (R_a-z) P(z) = \sum_{n\geq 0} z^n (R_p p_{n} - p_{n-1}).
 \end{equation*}
As the above expression can be evaluated at $z=R_p$, by Abel's radial theorem
 \begin{equation}\label{eq: limite Abel's theorem}
 	\lim_{z \rightarrow R_p^-} \frac{1}{g(z)}   
	=\sum_{n\geq 0} (R_p^{n+1} p_{n} - R_p^{n}p_{n-1}) = \lim_{n\to\infty} R_p^{n+1}p_n.
 \end{equation}
 Notice however that 
 \begin{align}
 	\lim_{z \rightarrow R_p^-} \frac{1}{g(z)}  =    \lim_{z \rightarrow R_p^-} \frac{R_p-z}{1-A(z)} =   \frac{1}{A'(R_p)} > 0.
 \end{align}
The last two diplays allow us to conclude that 
$p_n = \tfrac{1}{R_p A'(R_p)}R_p^{-n}(1+o(1))$ when $n \rightarrow \infty$.
The explicit value of $\lim_n p_n R_p^{n}$ is irrelevant for us; one simply needs to observe that it is strictly positive and finite. 
Setting $\zeta = (\log R_p)^{-1}$, we obtain that $p_n \asymp \e^{-n/\zeta}$ as required. 
\end{proof}

\begin{proof}[Proof of Corollary~\ref{cor:pure_exp}]
	Fix~$\vec w$ and~$p<p_c$. All constants below will be uniform in~$\vec w$,~$p$ and~$N \geq L(p)$. We have 
	\begin{align}
      		\phi_p\big[X_{\lfloor N/L(p)+1\rfloor} \neq \dagger\big] 
		\leq   	\phi_p\big[0\lra \calH_{\geq N}^{\vec w}\big] 
		\leq     	\phi_p\big[X_{\lfloor N/L(p)\rfloor} \neq \dagger\big] .
	\end{align}
	Apply Theorem~\ref{th:pure_expKMRP} to conclude that 
	\begin{equation*}
	\phi_p\big[0\lra \calH_{\geq N}^{\vec w}\big] \asymp \phi_p[X_1 \neq \dagger]\exp(-\tfrac{n}{L(p)\zeta(p, \vec w)})
	\end{equation*}
	for some $\zeta(p, \vec w)$. Moreover, by~\eqref{equ: first step survival rate}, $\phi_p[X_1 \neq \dagger] \asymp \pi_1(L(p))$, which concludes the proof of the corollary.
\end{proof}

 \begin{Rem}\label{rem: size_bias}
This computation allows us to identify the law of the inter-renewal times when the process is conditioned on surviving at least $n$ steps. 
Indeed, observe that for any $\ell \in \{1,\dots, n-1\}$, 
\begin{align*}
\PP[T_2-T_1 = \ell \,|\, \exists k \geq 0, T_k - T_1 = n, \calF_{T_1}, T_1 < \infty ] 
&= \tfrac{s_{n-\ell}}{s_n}\PP[T_2-T_1 = \ell \,|\, T_1 < \infty] \\
&\asymp \e^{\ell/ \zeta} \PP[T_2-T_1 = \ell \,|\, T_1 < \infty].
\end{align*}

We conclude that when $n$ tends to infinity, the law of the inter-renewal times converges towards an exponentially tilted sample of $\mathcal L( T_2 - T_1 = \cdot \,|\, T_1 < \infty)$. 
The first and last steps of the process, however,  have a different distribution. 
The same computation applied to finite-dimensional marginals yields that the joint distribution of the different inter-renewal times (except the first and last ones) converges towards i.i.d. samples of the exponential tilt of $\mathcal L( T_2 - T_1 = \cdot\,|\, T_1 < \infty)$.

Finally, the distribution of the $X$-steps also converges towards i.i.d. samples of some probability distribution. 
We call $\calL_{\rm irred}$ the limit of the law of $(T_2-T_1, X_2-X_1)$ under the conditionings $\{X_n \neq \dagger\}$ with $n\to\infty$. 
The mass gap implies that this distribution has exponential tails both for the $T$ and the $X$-steps. 
\end{Rem}

\subsection{Endpoint concentration when conditioned on survival}

We now prove a local limit theorem for~$X_n$ under the conditioning~$X_n \neq \dagger$.
Let~$g_\sigma(x) = \frac{1}{\sqrt{2\pi} \sigma}e^{-x^2/2\sigma^2}$ be the Gaussian density with variance~$\sigma^2$. 

\begin{Prop}\label{prop:LCLT}
	Fix~$\vec w \in \bbS^1$ and~$p < p_c$. There exists~$\mu = \mu(p,\vec w)$ and~$\sigma = \sigma(p,\vec w)$ such that, for any~$k \in \Z$, when $n \rightarrow \infty$, 
    \begin{equation}\label{eq:LCLT for the MRP}
       \left| \sqrt n \phi_p\Big[ \lfloor X_n/L(p)\rfloor = \lfloor n \cdot \mu\rfloor  + k \,\Big\vert\, X_n \neq \dagger \Big] - g_\sigma \left(\tfrac{k}{\sqrt{ n}}\right)\right| \to 0.
    \end{equation}
    with the asymptotics being uniform in~$p$ and~$\vec w$. 
    Furthermore~$|\mu(p,\vec w)|$ is uniformly bounded away from~$\infty$,
   and~$\sigma(p,\vec w)$ is uniformly bounded away from~$0$ and~$\infty$.
\end{Prop}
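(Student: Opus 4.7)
\emph{Strategy and reduction.} The plan is to reduce the proposition to a joint local central limit theorem (LCLT) for a bivariate i.i.d.\ random walk, and then extract the marginal LCLT for the vertical coordinate. I would begin by invoking Theorem~\ref{thm:killed_renewal_structure} and Remark~\ref{rem: size_bias}. Under $\phi_p[\,\cdot\,\mid X_n\neq \dagger]$, let $K$ be the index of the last renewal $T_K\leq n$, and set $\tau_k := T_{k+1}-T_k$ and $\xi_k := (X_{T_{k+1}}-X_{T_k})/L(p)$. By Remark~\ref{rem: size_bias}, as $n\to\infty$ the intermediate pairs $(\tau_k,\xi_k)_{1\leq k<K}$ behave as i.i.d.\ samples from the exponential tilt $\widetilde{\calL}_{\rm irred}$ of $\calL_{\rm irred}$, which still has exponential tails with a rate uniform in $(p,\vec w)$. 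The initial and terminal pieces have different laws but also uniformly exponential tails, contributing a tight $O(1)$ vertical displacement that is negligible at the scale $\sqrt{n}$ of the LCLT.

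\emph{Joint bivariate LCLT.} Let $(\mu_\tau,\mu_\xi)$ and $\Sigma$ denote the mean and covariance of $(\tau_1,\xi_1)$ under $\widetilde{\calL}_{\rm irred}$, and set $S_k = \sum_{j=1}^{k}(\tau_j,\xi_j)$. I would then apply a uniform Gnedenko-type LCLT:
\begin{equ}\label{eq:plan_joint_LCLT}
\bbP[S_k=(a,b)] = \tfrac{1}{2\pi k\sqrt{\det\Sigma}}\exp\bigl(-\tfrac{1}{2k}\langle v,\Sigma^{-1}v\rangle\bigr)+o(1/k), \qquad v=(a,b)-k(\mu_\tau,\mu_\xi),
\end{equ}
uniformly in $(a,b)\in\mathbb Z^2$ and in $(p,\vec w)$. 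Uniform exponential tails give smoothness and quadratic decay of the characteristic function $\widehat{\calL}$ near the origin; aperiodicity of $\tau_1$ (Theorem~\ref{thm:killed_renewal_structure}) together with the uniform lower bound on $\sigma_X$ give $|\widehat{\calL}(\theta)|\leq 1-c$ on any fundamental domain bounded away from $0$. A standard Fourier inversion then yields~\eqref{eq:plan_joint_LCLT}.

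\emph{Projection and control of parameters.} Summing~\eqref{eq:plan_joint_LCLT} over the renewal count $K$ (which concentrates around $n/\mu_\tau$ at scale $\sqrt{n}$ by renewal theory), incorporating the bounded boundary pieces, and normalising by $\phi_p[X_n\neq \dagger]$ via Corollary~\ref{cor:pure_exp}, gives
\begin{equ}
\sqrt n\,\phi_p\bigl[\lfloor X_n/L(p)\rfloor=\lfloor n\mu\rfloor+k\,\bigm|\,X_n\neq\dagger\bigr]\longrightarrow g_\sigma(k/\sqrt n),
\end{equ}
with $\mu(p,\vec w)=\mu_\xi/\mu_\tau$ and $\sigma^2(p,\vec w)=\mathrm{Var}(\xi_1-(\mu_\xi/\mu_\tau)\tau_1)/\mu_\tau$, the asymptotic variance of the vertical coordinate of the bivariate walk when its horizontal coordinate equals $n$. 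The upper bounds $|\mu_\xi|,\mu_\tau,\sigma_\tau^2,\sigma_\xi^2\les 1$ follow from the uniform exponential tails, and $\mu_\tau\ges 1$ is built into the definition of the renewal times, so $|\mu|$ is uniformly bounded. For $\sigma$ bounded away from $0$, one must rule out $(\tau_1,\xi_1)$ being supported on a line through the origin of slope $\mu_\xi/\mu_\tau$; this is achieved by combining the aperiodicity of $\tau_1$ with a short~\eqref{eq:RSWnc}-based construction exhibiting two atoms of $(\tau_1,\xi_1)$ that are not collinear with $0$, each of probability $\ges 1$ uniformly in $(p,\vec w)$.

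\emph{Main obstacle.} The principal technical difficulty is making the joint LCLT~\eqref{eq:plan_joint_LCLT} quantitatively uniform in the near-critical parameter $p$ and the direction $\vec w$. This reduces to a uniform spectral-gap estimate for $\widehat{\calL}$, which has to be extracted from the surgery constructions underlying Theorem~\ref{thm:killed_renewal_structure}: using~\eqref{eq:RSWnc} and the techniques of Section~\ref{subsection: uniform mixing}, I would produce several distinct irreducible renewal pieces, each of probability bounded below uniformly in $(p,\vec w)$, which simultaneously yields the uniform lower bound on the covariance near $\theta=0$ and the uniform bound $|\widehat{\calL}(\theta)|\leq 1-c$ on $|\theta|\geq \delta$. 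Once this uniform spectral-gap estimate is in hand, the remainder of the argument follows standard Fourier-analytic lines.
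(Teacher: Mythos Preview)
Your proposal is correct and follows essentially the same route as the paper: reduce via Remark~\ref{rem: size_bias} to an i.i.d.\ structure for the renewal increments under the survival conditioning, then invoke a local limit theorem for the resulting compound renewal process, with the uniform bounds on $\mu$ and $\sigma$ read off from the uniform mass-gap, step-variance and killing-rate bounds in Theorem~\ref{thm:killed_renewal_structure}. The paper's proof is shorter only because it outsources the LCLT step entirely to \cite[Theorem~2.1.2]{Borovkov_2022} for compound Markov processes, whereas you unpack that black box into a bivariate Gnedenko-type Fourier argument; your treatment of the uniform spectral gap for $\widehat{\calL}$ is exactly the ingredient that makes the cited theorem apply uniformly in $(p,\vec w)$, so you are not doing anything genuinely different, just being more explicit.
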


\begin{proof}
We argued in Remark~\ref{rem: size_bias} that when conditioned on $X_n \neq \dagger$, the process has steps sampled from some i.i.d. distribution.
It thus converges towards a so-called \emph{compound Markov process}, using the terminology of~\cite{Borovkov_2022}.
Local limit-type theorems for compound Markov processes are classical; for instance, Proposition~\ref{prop:LCLT} directly follows from~\cite[Theorem 2.1.2]{Borovkov_2022}.

The uniform bounds on the  $\mu(p,\vec w)$ and~$\sigma(p,\vec w)$ follow from those on the mass-gap, mean, variance and killing rate of the KMRP in Theorem~\ref{thm:killed_renewal_structure}. 
\end{proof}

\begin{Rem}\label{rem:additional_RW}
	Other properties typical of random walks may be extended to the process~$(X_n)_n$ such as the existence of a uniform constant~$C > 0$ such that for any $k \in \mathbb{Z}$, 
	\begin{align}\label{eq:k0_is_max}
		\phi_p\Big[ \lfloor X_n/L(p)\rfloor = \lfloor n \cdot \mu \rfloor  + k \,\Big\vert\, X_n \neq \dagger \Big] 
		\leq C \phi_p\Big[ \lfloor X_n/L(p)\rfloor = \lfloor n \cdot \mu \rfloor \,\Big\vert\, X_n \neq \dagger \Big]
	\end{align}
	and a large deviation estimate
	\begin{align}\label{eq:large_deviation}
		\phi_p\Big[ |X_n/L(p) - \lfloor n \cdot \mu|\geq \alpha n\,\Big\vert\, X_n \neq \dagger \Big] 
		= \e^{-I(\alpha) n + o(n)}
	\end{align}
	where~$I$ is a differentiable function on an interval~$(-\eps,\eps)$ for some~$\eps>0$ with~$I(0) = 0$ and~$I(\alpha) > 0$ for~$\alpha \neq 0$.  
	All constants appearing in the above are uniform in~$n$,~$\vec w$ and~$p<p_c$. The rate function~$I$ does depend on the direction~$\vec w$. 
	For proofs of such statements, see~\cite{Borovkov_2022}.
\end{Rem}

As a consequence, we deduce a preliminary form of the OZ-formula. Recall the quantity~$\zeta(p, \vec w)$ defined in Corollary~\ref{cor:pure_exp}.

\begin{Cor}\label{cor:OZw}
	Fix~$\vec w \in \bbS^1$ and~$p < p_c$. There exists~$\mu = \mu(p,\vec w)$  such that
    \begin{align}\label{eq:OZw}
       	\phi_p\big[ 0\lra  x \big]
		\asymp \frac{\pi_1(L(p))^2}{\sqrt{n}}\e^{-\frac {n}{\zeta(p, \vec w)}},
	\end{align}
	uniformly in~$p$,~$\vec w$,~$n \geq 1$ and any~$x \in \Z^2 \setminus \La_{L(p)}$ with~$\|x -nL(p)(\vec w + \mu \cdot \vec w^\perp)\| \leq L(p)$.
\end{Cor}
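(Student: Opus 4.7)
The plan is to factorise $\{0 \lra x\}$ through the intermediate event that the cluster $\sfC$ of $0$ reaches a small box around the deterministic predicted endpoint. Set $y := n L(p)(\vec w + \mu \vec w^\perp)$, so that $\langle y, \vec w\rangle = nL(p)$ places $y$ on the hyperplane $\partial \calH_{\leq n}^{\vec w}$, and $x \in \La_{L(p)}(y)$ by assumption on $x$. The goal is to establish the two-step factorisation
\begin{equ}
	\phi_p[0 \lra x] \asymp \phi_p\big[\sfC \cap \La_{L(p)}(y) \neq \emptyset\big] \cdot \pi_1(L(p)),
\end{equ}
and then to compute the first factor using the killed Markov renewal structure of Theorem~\ref{thm:killed_renewal_structure} together with the local CLT. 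The case of bounded $n$ is handled directly by two applications of Theorem~\ref{thm: one arm} combined with~\eqref{eq:RSWnc}, so we may assume that $n$ is larger than some universal constant.

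For the first factor, the plan is to show that $\{\sfC \cap \La_{L(p)}(y) \neq \emptyset\}$ is comparable, up to universal multiplicative constants, to the event $E := \{X_n \neq \dagger\} \cap \{|\lfloor X_n / L(p)\rfloor - \lfloor n \mu \rfloor| \leq c_0\}$ for a large enough universal constant $c_0$. The upper bound would use Proposition~\ref{prop:cone_contained} to confine $\sfC$ to a cone of bounded aperture around $\vec w$, preventing the cluster from reaching $\La_{L(p)}(y)$ unless $X_n$ sits within $c_0 L(p)$ of $n \mu L(p)$; the lower bound would use~\eqref{eq:RSWnc} to extend $\sfC_{\leq n-1}$ into $\La_{L(p)}(y)$ with uniformly positive probability whenever $X_{n-1}$ is suitably placed. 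Proposition~\ref{prop:LCLT} then gives $\phi_p[E \mid X_n \neq \dagger] \asymp 1/\sqrt n$, while Corollary~\ref{cor:pure_exp}, applied at Euclidean distance $n L(p)$, gives $\phi_p[X_n \neq \dagger] \asymp \pi_1(L(p))\, \e^{-n/\zeta(p,\vec w)}$. Multiplying delivers
\begin{equ}
	\phi_p\big[\sfC \cap \La_{L(p)}(y) \neq \emptyset\big] \asymp \frac{\pi_1(L(p))}{\sqrt n}\, \e^{-n/\zeta(p,\vec w)}.
\end{equ}

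It remains to show $\phi_p[0 \lra x \mid \sfC \cap \La_{L(p)}(y) \neq \emptyset] \asymp \pi_1(L(p))$, which is the heart of the argument. The upper bound should follow by conditioning on the configuration outside $\La_{2 L(p)}(x)$ and dominating the induced boundary conditions by the wired ones via~\eqref{eq:mon}, after which Theorem~\ref{thm: one arm} gives $\pi_1(L(p))$ as an upper bound for the conditional probability of $\{0 \lra x\}$. The lower bound will be obtained by exploring $\sfC$ up to its last pre-renewal time $t$ before $\La_{2L(p)}(y)$ is reached; Proposition~\ref{prop:pre-renewal_density} ensures such a $t$ exists with overwhelming probability at a macroscopic distance from $x$, while the cone-containment property (Remark~\ref{rem:cone_contained}) guarantees that the explored cluster approaches $\La_{L(p)}(y)$ from a single side and leaves most of $\partial \La_{L(p)}(x)$ free of boundary effects. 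A short~\eqref{eq:RSWnc} construction will then extend the exposed primal tip to $\partial \La_{L(p)}(x)$, and Theorem~\ref{thm: one arm} applied in the opposite direction will yield the one-arm connection from $\partial \La_{L(p)}(x)$ to $x$ with probability $\ges \pi_1(L(p))$. The main obstacle will be the careful tracking of the boundary conditions induced by the conditioning on the explored cluster, which must fall within the regime of applicability of Theorem~\ref{thm: one arm}; the cone-containment property, combined with the RSW-type surgeries developed in Section~\ref{subsection: uniform mixing}, is precisely the tool required to make this estimate uniform in $p < p_c$ and $\vec w$.
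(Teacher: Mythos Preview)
Your overall strategy---factor into a macroscopic event that the cluster lands near $y$ and a one-arm cost $\pi_1(L(p))$, then evaluate the macroscopic piece via the KMRP local CLT and Corollary~\ref{cor:pure_exp}---is the same as the paper's. The lower bound sketch and the upper bound via measurability inside/outside $\La_{L(p)}(x)$ are also essentially what the paper does.

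There is, however, a real gap in your upper bound for the first factor. You claim that Proposition~\ref{prop:cone_contained} forces $X_n$ to lie within $c_0 L(p)$ of $n\mu L(p)$ whenever $\sfC$ meets $\La_{L(p)}(y)$. But cone containment only places $\sfC$ in a cone of fixed aperture $\alpha$; at depth $n$ that cone has width of order $\alpha n L(p)$, not $c_0 L(p)$. In particular, the cluster can touch $\La_{L(p)}(y)$ while $X_n$---which is just the \emph{top-most} intersection point---sits far above $y$, or with many other active segments present. So the event $\{\sfC \cap \La_{L(p)}(y)\neq\emptyset\}$ is \emph{not} comparable to your event $E$ via cone containment alone.

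The paper closes this gap by a different decomposition: it bounds $\phi_p[0 \lra \La_{L(p)}(x)]$ by summing over the value $n-k$ of the \emph{last renewal time} before $n$. Cone containment \emph{after} that renewal time then forces $|X_{n-k} - n\mu L(p)| \leq \alpha(k+1)L(p)$, and one sums over $k$ using the exponential tail~\eqref{eq:KMRP_exp_tail1} together with the pointwise maximum estimate~\eqref{eq:k0_is_max} from Remark~\ref{rem:additional_RW}. The latter ingredient---that no value of $\lfloor X_{n-k}/L(p)\rfloor$ is more likely than the mean---is essential and absent from your sketch; without it you cannot convert ``the cluster hits a given box'' into a $1/\sqrt{n}$ factor.
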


\begin{Rem}\label{rem:large_deviation2}
	The proof below also allows one to deduce from~\eqref{eq:large_deviation} a large deviation estimate for the hitting position 
	\begin{align}\label{eq:large_deviation2}
		\phi_p\Big[ 0 \lra \lfloor nL(p)(\vec w + (\mu(p,\vec w) + \alpha) \cdot \vec w^\perp)\rfloor  \,\Big\vert\, 0 \lra  \calH^{\vec w}_{\geq n} \Big] 
		= \pi_1(L(p))^2\cdot  \e^{-I(\alpha) n + o(n)}
	\end{align}
	for any~$\alpha$ close enough to~$0$ and some differentiable  rate function~$I$ with~$I(0) = 0$ and~$I(\alpha) > 0$ for~$\alpha \neq 0$. 
\end{Rem}

\begin{proof}[Proof of Corollary \ref{cor:OZw}]
	Fix~$p$,~$\vec w \in \bbS^1$
	and~$x \in \La_{L(p)/2}(nL(p)(\vec w + \mu(p,\vec w)\cdot \vec w^\perp))$ for some~$n\geq 1$. 
	For~$n<3$ it was proved in~\cite{DuminilCopinManolescuScalingRelations} that~$\phi_p [ 0\lra  x ] \asymp \pi_1(L(p))^2$. We will henceforth assume that~$n\geq 3$. 
	The upper and lower bound on~$\phi_p[ 0\lra  x]$ will be treated differently. All constants below are uniform in $p$ and $\vec w$. 
	
	\begin{figure}
	\begin{center}
	\includegraphics[width = 0.6\textwidth]{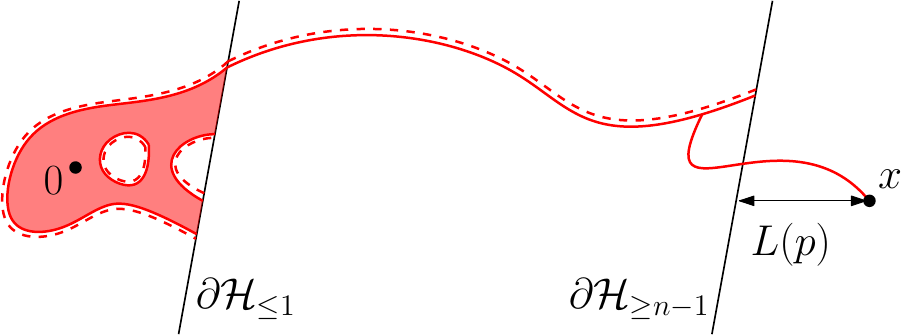}
	\caption{If we assume that $X_{n-1} \neq \dagger$ and that it is at a distance of order $L(p)$ from $x$,
	then connecting $x$ to $0$ requires an additional cost of $\pi_1(L(p))$.
	The picture appears skewed since the horizontal direction is $\vec w + \mu(p,\vec w)\cdot \vec w^\perp$, while the hyperplanes $\partial\calH_{\leq t}$ are orthogonal to $\vec w$. }
	\label{fig:connecting}
	\end{center}
	\end{figure}
	
	We start with the lower bound. 
	Write~$\Gamma$ for the top-most path connecting $\sfC_{\leq 1}$ to~$\calH_{\geq n-1}$. 
	This path is explorable starting from the top of $\sfC_{\leq 1} \cap \partial\calH_{\leq 1}$ and~$X_{n-1}$ is its endpoint on~$\calH_{\geq n-1}$. 
	Conditioning on a realisation~$\Gamma = \gamma$ with~$\lfloor X_{n-1}/L(p)\rfloor = \lfloor (n-1) \cdot \mu (p, \vec w)\rfloor$,
	by applying some RSW-type construction in~$\La_{2L(p)}(x)$ (see Figure~\ref{fig:connecting}) we find that 
	\begin{align}
		\phi_p[ 0\lra  x \,|\, \Gamma = \gamma ] \geq 	c\, \phi_{\La_{L(p)},p}^0[x \lra \partial \La_{L(p)/2}] \asymp \pi_1(L(p)),
	\end{align}
	where~\eqref{eq:RSWnc} and~\eqref{eq:one_arm} state that all constants may be chosen uniform in~$\vec w$ and~$p$.
	Summing over~$\gamma$ as above 
	we conclude that 
	\begin{align}\label{eq:0toxlb}
		\phi_p[ 0\lra  x] 
		\ges \pi_1(L(p)) \,\phi_p\big[ \lfloor X_{n-1}/L(p)\rfloor = \lfloor (n-1) \cdot \mu _{\vec w}\rfloor\big]  
		\asymp \frac{\pi_1(L(p))^2}{\sqrt{n}}\e^{-\frac {n}{\zeta(p, \vec w)}},
	\end{align}	
	where we used~\eqref{eq:LCLT for the MRP} and~\eqref{eq:pi1pure_exp} in the last equivalence. 
	\smallskip 
	
	We turn to the upper bound, for which we decompose
	\begin{align}\label{eq:LaLaub}
		\phi_p[ 0\lra  x] 
		&\leq \phi_p[0 \lra \La_{L(p)} (x)\text{ and } x \lra \partial \La_{L(p)} (x)] \nonumber\\
		&\leq  \phi_p[0 \lra \La_{L(p)} (x)]\,\phi_{\La_{L(p)},p}^1[ 0 \lra \partial \La_{L(p)}],
	\end{align}
	where the second inequality is due to the fact that the two events are measurable in terms of what happens outside and inside of~$\La_{L(p)} (x)$, respectively. 
	We used also used a translation by $x$ for the second factor. 
	The second probability may be bounded above by a universal multiple $\pi_1(L(p))$ due to~\eqref{eq:one_arm}.

	To bound the first quantity, decompose depending on the value of the last renewal time $T_\ell$ before $n$, and keep in mind that cluster after this renewal time is contained in $\calY + (L(p) T_k, X_k)$. 
	Thus, applying \eqref{eq:KMRP_exp_tail1}, we find
	\begin{align}
		 \phi_p[&0 \lra \La_{L(p)} (x)] \\
		& \leq 
		 \sum_{k\geq 0}  \phi_p\big[|X_{n-k} - nL(p)\mu \vec w^\perp | \leq \alpha (k + 1) ,\, Y_{n-k} = 1 \text{ and } Y_{n-k+1} = \dots = Y_n = 0\big] \\
		 &\les
		 \sum_{k\geq 0}  \phi_p\big[|X_{n-k} - nL(p)\mu \vec w^\perp | \leq \alpha (k + 1)\big]  \e^{-n (c + 1/\zeta(p, \vec w))}\\
			 &\les \alpha \pi_1(L(p)) \e^{-\frac {n}{\zeta(p, \vec w)}} \cdot 		 \sum_{k\geq 0}  \tfrac{k+1}{\sqrt{n-k}}\e^{-c n}\\
		&\les
		 \tfrac{\pi_1(L(p))}{\sqrt{n}}\e^{-\frac {n}{\zeta(p, \vec w)}},
	\end{align}
	with the before-last inequality due to \eqref{eq:k0_is_max}.
	Inserting these bounds in \eqref{eq:LaLaub} yields the converse of~\eqref{eq:0toxlb}.
\end{proof}

\subsection{Invertibility of the drift and proof of Theorem~\ref{thm:main}}

In light of the previous section, write 
\begin{align}\label{eq:vecv(vecw)}
	\vec v(\vec w) :=\frac{\vec w + \mu(p,\vec w)\cdot \vec w^\perp}{\|\vec w + \mu(p,\vec w)\cdot \vec w^\perp\|} \in \bbS^1.
\end{align}
Taking~$n\to\infty$ in~\eqref{eq:OZw}, we conclude that 
\begin{align}\label{eq:zeta_xi}
	\xi_p(\vec v(\vec w)) = L(p)\zeta(p,\vec w) \|\vec w + \mu(p,\vec w)\cdot \vec w^\perp\| 
	= \frac{ L(p)\zeta(p,\vec w)}{ \langle\vec v(\vec w), \vec w\rangle}.
\end{align}

The following proposition shows that $\vec w \mapsto \vec v(\vec w)$ is a one-to-one function from $\bbS^1$ to itself.
Theorem~\ref{thm:main} will be a consequence of the surjectivity of this map. Moreover, we shall see in the next section that its injectivity has interesting consequences regarding strict convexity properties of the Wulff shape.
Henceforth the choice of $\vec w^\perp$ is important, and we will assume that it is the rotation by $\pi/2$ of $\vec w$ in the positive direction.

\begin{Prop}\label{prop:v_one_to_one}
	For any~$\vec v \in \bbS^1$, there exists exactly one~$\vec w \in \bbS^1$ so that 
	\begin{align}
 		\vec v(\vec w) = \vec v. 
	\end{align}
\end{Prop}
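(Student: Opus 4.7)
The plan has three parts which combine at the end: continuity of $\vec w \mapsto \vec v(\vec w)$, a degree-one computation, and a local injectivity statement. Once all three are in hand, a continuous, locally injective, degree-one self-map of $\bbS^1$ is automatically a homeomorphism, which yields both surjectivity and injectivity simultaneously.

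For continuity, I would argue that $\mu(p,\vec w)$ depends continuously on $\vec w$. The drift $\mu$ is the mean of the $X$-step under the limiting tilted inter-renewal law $\calL_{\rm irred}$ of Remark~\ref{rem: size_bias}, and although $\partial \calH^{\vec w}_{\leq t}$ is not smooth in $\vec w$ because of the underlying lattice, two nearby directions $\vec w, \vec w'$ induce hyperplanes that agree outside a thin strip; the conditional measures $\phi_p[\,\cdot\,\vert\, 0 \lra \calH^{\vec w}_{\geq n}]$ may thus be coupled to agree on most of the explored cluster, and the uniform exponential tails of Theorem~\ref{thm:killed_renewal_structure} then transfer into continuous dependence of the relevant moments. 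For the degree computation, parametrize $\vec w_\theta = (\cos\theta, \sin\theta)$ and $\vec v(\vec w_\theta) = (\cos\varphi(\theta), \sin\varphi(\theta))$, so that $\varphi(\theta) = \theta + \arctan \mu(p,\vec w_\theta)$. Since $|\mu|$ is uniformly bounded by some finite $M$ (a consequence of~\eqref{eq:KMRP_exp_tail2}), one has $|\varphi(\theta) - \theta| \leq \arctan M < \pi/2$, so $\varphi$ lifts to a continuous map $\bbR \to \bbR$ with $\varphi(\theta + 2\pi) = \varphi(\theta) + 2\pi$, whence the induced self-map of $\bbS^1$ has degree one.

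For local injectivity, the key input is the variational identity
\begin{align*}
    \frac{1}{\xi_p(\vec v)} = \sup_{\vec w \in \bbS^1} \frac{\langle \vec v, \vec w\rangle}{L(p)\,\zeta(p,\vec w)},
\end{align*}
which follows from the elementary bound $\phi_p[0 \lra \lfloor N \vec v\rfloor] \leq \phi_p[0 \lra \calH^{\vec w}_{\geq N\langle \vec v,\vec w\rangle}]$ for any $\vec w$ with $\langle \vec v,\vec w\rangle > 0$, combined with Corollary~\ref{cor:pure_exp}, and whose supremum is attained \emph{exactly} at those $\vec w$ satisfying $\vec v(\vec w)=\vec v$, by the OZ computation~\eqref{eq:zeta_xi}. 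The strict convexity at $0$ of the rate function $I_{\vec w}$ furnished by Remark~\ref{rem:additional_RW} then forces the functional $\vec w \mapsto \langle \vec v, \vec w\rangle/(L(p)\zeta(p,\vec w))$ to be strictly concave in a neighborhood of each of its maximizers, ruling out nearby maximizers and delivering local injectivity of $\vec w \mapsto \vec v(\vec w)$.

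The main obstacle is the last step: converting the strict convexity of $I_{\vec w}$—a statement about the \emph{transversal} endpoint distribution at a fixed direction $\vec w$—into strict concavity of the variational functional, in which $\vec w$ \emph{itself} varies. Infinitesimally rotating $\vec w$ simultaneously rotates the target hyperplane and modifies $\zeta(p,\vec w)$, and rigorously controlling the balance between these two effects will likely require a perturbative analysis of the killed renewal equation~\eqref{eq: renewal equation} and its implicit dependence on $\vec w$, in the spirit of the implicit-function analysis behind Theorem~\ref{th:pure_expKMRP}.
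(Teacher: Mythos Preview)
Your overall architecture --- continuity plus degree one plus local injectivity --- is sound and close in spirit to the paper's proof, but both your continuity sketch and your local-injectivity step have gaps, and the paper closes them by different means than you anticipate.

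For continuity, your coupling argument is shakier than you indicate: the hyperplanes $\partial\calH^{\vec w}_{\leq t}$ and $\partial\calH^{\vec w'}_{\leq t}$ diverge linearly in $t$, so the explored clusters $\sfC^{\vec w}_{\leq t}$ and $\sfC^{\vec w'}_{\leq t}$ may differ substantially, and the renewal structures of Theorem~\ref{thm:killed_renewal_structure} are built from direction-dependent events (active segments, cone-containment, well-behaved futures) that do not obviously couple. The paper sidesteps this entirely: it first proves continuity of $\zeta(p,\cdot)$ by a direct geometric comparison --- a connection from $0$ to $nL(p)(\vec w+\mu\vec w^\perp)$ also crosses a rotated half-space $\calH^{\rho_\delta(\vec w)}_{\geq n\cos\delta}$ (or similar), giving inequalities of the form $\zeta(\rho_\delta\vec w)\geq(\cos\delta)\zeta(\vec w)$ --- and then feeds this into the large deviation estimate~\eqref{eq:large_deviation2} to obtain continuity of $\vec v(\cdot)$.

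The more serious issue is the local injectivity, and here the paper does \emph{not} proceed via a perturbative analysis of the renewal equation or any regularity of $\zeta$ in $\vec w$, as you suggest. Instead it stays at the single fixed direction $\vec w$ and uses~\eqref{eq:large_deviation2} as follows. The point $nL(p)(\vec w+(\mu+\alpha)\vec w^\perp)$ lies on the hyperplane $\partial\calH^{\rho_\delta(\vec w)}_{\geq n(\cos\delta+(\mu+\alpha)\sin\delta)}$, so comparing the two-point connection rate $I(\alpha)+\zeta(\vec w)^{-1}$ with the half-plane rate in direction $\rho_\delta(\vec w)$ yields an inequality, with equality precisely when $\vec w+(\mu+\alpha)\vec w^\perp$ is proportional to $\vec v(\rho_\delta(\vec w))$. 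Hence $\vec v(\rho_\delta(\vec w))$ is determined by the minimiser of
\[
\alpha\;\longmapsto\;\frac{I(\alpha)+\zeta(\vec w)^{-1}}{\cos\delta+(\mu+\alpha)\sin\delta}.
\]
Since $I$ is convex with $I(0)=I'(0)=0$, one checks directly that for $\delta>0$ small the minimiser $\alpha_{\min}(\delta)$ is strictly positive, so $\vec v(\rho_\delta(\vec w))$ is a strictly positive rotation of $\vec v(\vec w)$. This gives strict monotonicity of the angle map $\varphi$, hence injectivity. The point is that the dependence on $\vec w$ enters only through the explicit factors $\cos\delta$, $\sin\delta$; no regularity of $\zeta$ beyond continuity is needed.
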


\begin{proof}	
	The lemma decomposes in two disjoint statements, the surjectivity and injectivity of the function $\vec v$. We start with the former.
	
	We will prove that the function~$\vec w \mapsto \vec v(\vec w)$ is continuous. We start off by proving that $\vec w \mapsto \zeta(p,\vec w)$ is continuous. 
	This is a simple geometric construction; see Figure~\ref{fig:continuity_zeta} for an illustration. 
	
	\begin{figure}
	\begin{center}
	\includegraphics[width = 0.45\textwidth, page = 1]{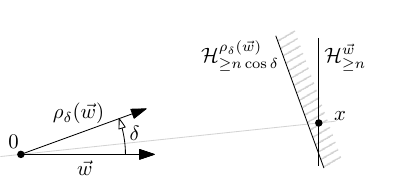}\hspace{0.05\textwidth}
	\includegraphics[width = 0.45\textwidth, page = 2]{continuity_mu.pdf}	
	\caption{{\em Left:} when $\mu(\vec w)$ and $\delta$ have same sign, 
	the point $x = n(\vec w + \mu(\vec w)\vec w^\perp)$ is contained in $\calH^{\rho_\delta(\vec w)}_{\geq n \cdot \cos \delta}$.
	{\em Right:} when $\mu(\vec w) > 0$ and $\delta < 0$, the same point $x$ is contained in $\calH^{\rho_\delta(\vec w)}_{\geq n \cdot\frac{ \cos (\theta - \delta)}{\cos \theta}}$.
	In both images, the grey direction is given by $\vec v(\vec w)$. }
	\label{fig:continuity_zeta}
	\end{center}
	\end{figure}
	
	Fix some~$\vec w \in \bbS_1$ and assume for simplicity that~$\mu(p, \vec w) \geq 0$. 
	Write~$\rho_\delta (.)$ be the rotation by an angle~$\delta$. 
		For~$\delta > 0$, a connection between~$0$ and~$x = nL(p)(\vec w + \mu \cdot \vec w^\perp)$
	intersects~$\calH^{\rho_\delta(\vec w)}_{\geq n \cdot \cos \delta}$
	Together with~\eqref{eq:OZw} and~\eqref{eq:pi1pure_exp} this implies that 
	\begin{align}\label{equ:upper_bound_cos}
		\zeta(p,\rho_\delta(\vec w)) \geq	\cos \delta \cdot \zeta(p,\vec w).
	\end{align}
	
	For~$\delta < 0$, write~$\theta = \arctan (\mu(p,\vec w))$ -- note that~$\theta$ is uniformly bounded away from~$\infty$ by Proposition~\ref{prop:LCLT}.
	Then, a connection between~$0$ and~$x = nL(p)(\vec w + \mu \cdot \vec w^\perp)$ ensures that 
	the cluster of~$0$ intersects~$\calH^{\rho_\delta(\vec w)}_{\geq n \frac{\cos(\theta+ \delta)}{\cos\theta}}$. 
	Thus
	\begin{align}\label{equ:upper_bound_cos2}
		\zeta(p,\rho_\delta(\vec w)) \geq	 \tfrac{\cos(\theta+ \delta)}{\cos(\theta)} \cdot  \zeta(p,\vec w). 
	\end{align}	
	Using the two displays above, and their versions with the roles of~$\vec w$ and~$\rho_\delta(\vec w)$ inverted, we conclude that~$\vec w \mapsto \zeta(p,\vec w)$ is continuous.

	The reasoning above shows more than just the continuity of $\zeta$. Indeed, due to the specific definition of $x$ we conclude that, for any~$\eps > 0$ and~$\delta$ small enough
	\begin{align*}
		\phi_p\Big[ 0\lra  \tfrac{n}{\langle\vec v(\vec w), \rho_\delta(\vec w) \rangle} \vec v(\vec w)  \,\Big\vert\, 0 \lra \calH_{\geq n}^{\rho_\delta( \vec w) }\Big]	
		\ges \exp( - \eps \, n).
	\end{align*}
	Together with the large deviation estimate~\eqref{eq:large_deviation2} and the continuity of the rate function, 
	this implies that~$\vec v(\vec w)$ may be rendered arbitrarily close to~$\vec v(\rho_\delta(\vec w))$, provided $\delta$ is small enough. 
	Thus we conclude the proof of the continuity of~$\vec w \mapsto \vec v(\vec w)$.
	
	To conclude the surjectivity of $\vec w\mapsto \vec v(\vec w)$ observe that, for~$\vec w$ a coordinate vector,~$\mu(p,\vec w) = 0$ due to the symmetry of the lattice.
	It follows that~$\vec v(\vec w) = \vec w$ in this case. 
	Furthermore, symmetry also implies that  $\vec v$ commutes with any reflexion around a lattice axis.
	By the intermediate value theorem and the precedent observation, 
	we conclude that the image of the function~$\vec w \mapsto \vec v(\vec w)$ covers the whole unit circle, as claimed. \smallskip

	We turn to the proof of the injectivity of $\vec w \mapsto \vec v(\vec w)$. 
	We will prove that for any $\vec w$ and $\delta >0 $ small enough, $\vec v(\rho_\delta (\vec w))$ is a small, strictly positive rotation of $\vec v(\vec w)$.
	
	Fix $\delta \in [-\pi/4,\pi/4]$. 
	Applying the large deviation estimate \eqref{eq:large_deviation2} we conclude that, for all $\alpha$ small enough (independent of $\delta$), 
	\begin{align}
\exp\big(-(I(\alpha)  + \zeta(\vec w)^{-1})(n+ o(n))\big)
&\asymp  \phi_p\Big[ 0 \lra \lfloor nL(p)(\vec w + (\mu(p,\vec w) + \alpha) \cdot \vec w^\perp)\rfloor \Big] \\
&\ges		\phi_p\Big[ 0 \lra    \calH^{\vec \rho_\delta(w)}_{\geq n \langle  \vec w + (\mu(p,\vec w) + \alpha) \cdot \vec w^\perp, \vec \rho_\delta(w)\rangle } \Big]  \\
&		\asymp
		\exp\Big(- \tfrac{\cos \delta +  (\mu(p,\vec w) + \alpha)  \sin \delta }{  \zeta(\rho_\delta(\vec w))} (n+ o(n))\Big)
		\label{eq:large_deviation3}
	\end{align}
	where $I$ is the large deviation rate function associated to the walk in direction $\vec w$.
	The constants in the asymptotics above depend on $p$, but not on $n$. 
	Furthermore, we have asymptotic equality in \eqref{eq:large_deviation3} if and only if 
	\begin{align}
		\vec w + (\mu(p,\vec w) + \alpha) \cdot \vec w^\perp &= \rho_\delta(\vec w) + \mu(p,\rho_\delta(\vec w))  \cdot \delta(\vec w)^\perp ,\\ 
		\text{ which translates to }\qquad 
		\vec v(\rho_\delta(\vec v))& =  \frac{\vec w + (\mu(p,\vec w) + \alpha) \cdot \vec w^\perp}{\|\vec w + (\mu(p,\vec w) + \alpha) \cdot \vec w^\perp\|}. \label{eq:v_rho_delta(w)}
	\end{align}
	
	By the continuity of  $\vec w \mapsto \vec v(\vec w)$, if $\delta$ is small enough, 
	the equality above holds for $\alpha$ in the range of validity of \eqref{eq:large_deviation3}.
	We conclude that \eqref{eq:v_rho_delta(w)}
	holds for the unique $\alpha$ minimising
	\begin{align}
		\frac{I(\alpha)  + \zeta(\vec w)^{-1}}{\cos \delta +  (\mu(p,\vec w) + \alpha)  \sin \delta}.
	\end{align}
	Using the fact that $I$ is an even, convex function that is differentiable at $0$ with $I(0) = I'(0) = 0$, we conclude that the minimiser occurs 
	for $\alpha_{\min} = \alpha_{\min}(\delta) > 0$ within the range of \eqref{eq:large_deviation3}. 
	
	Recalling that $\vec v(\vec w)$ is the unit vector in the direction $\vec w + \mu(p,\vec w) \vec w^\perp$ and that 
	$\vec v(\rho_\delta(\vec w))$ is the one in the direction $\vec w + \mu(p,\vec w)\cdot \vec w^\perp  + \alpha_{\min}(\delta) \cdot \vec w^\perp$, 
	we conclude that the latter is indeed a small positive rotation of the former.

	This readily implies the injectivity of the function $\vec w \mapsto \vec v(\vec w)$. 
	Indeed, consider its expression in terms of angle arguments 
	\begin{align}
	\varphi : & [0,\pi/2] \longrightarrow [-\pi,\pi)\\
	&\theta \mapsto \arg ( \vec v( \cos\theta,\sin \theta));
	\end{align}
	recall that the symmetries of the lattice ensure that $\varphi$ maps $0$ and $\pi/2$ to themselves. 
	Then our analysis says that, for any $\theta \in [0,\pi/2)$, there exists $\delta > 0$ such that $\varphi(\theta') > \varphi(\theta)$ for all $\theta < \theta' \leq \theta + \delta$. 
	Together with the  previously proved continuity of $\varphi$, this implies that $\varphi$ is strictly increasing, hence injective. 
	The injectivity extends to the whole range of angles $[0,2\pi)$ by symmetries. 
\end{proof}

We can finally prove Theorem~\ref{thm:main}.

\begin{proof}[Proof of Theorem~\ref{thm:main}]
Fix~$q \geq 1$,~$\vec v \in \mathbb S^1$,~$p< p_c$ and~$n \geq \xi_p(\vec v)$. Define~$\vec w$ as a vector such that~$\vec v(\vec w) = \vec v$. 
Then~\eqref{eq:OZw} implies that
	 \begin{align}§\label{eq:OZ33}
 		\phi_p\big[0 \lra \lfloor n  \vec v \rfloor\big]
		\asymp \frac{\pi_1(L(p))^2}{\sqrt{m}}\e^{-\frac {m}{\zeta(p, \vec w)}},
	\end{align}
	where~$m = \frac{n}{\langle\vec v,\vec w \rangle L(p)}$. 
	Recall that the drift~$\mu(p,\vec w)$ is bounded uniformly, and therefore~$\langle\vec v,\vec w \rangle$ is uniformly positive. 
	Thus~$m\asymp n/L(p)$ and therefore $ \xi_p(\vec v) = \langle \vec v,\vec w \rangle L(p) \zeta(p, \vec w)$.
	Inserting these observations in~\eqref{eq:OZ33} we find
	 \begin{align*}
 		\phi_p\big[0 \lra \lfloor n  \vec v \rfloor\big]
		\asymp 
		\pi_1( \xi_p(\vec v)) ^2 \sqrt{\tfrac{\xi_p(\vec v)}{n}}\, \e^{-\frac {n}{\xi_p(\vec v)}},
	\end{align*}
	as claimed.	
\end{proof}

\subsection{Strict convexity of Wulff shape}\label{sec:Wulff}

We first introduce some notation and discuss classical properties.
Fix~$p< p_c$. For $\vec w \in \mathbb R^2 \setminus \{0\}$ define
\begin{align*}
	(\xi^*_p(\vec w))^{-1} = - \lim_{n\to\infty} \tfrac{1}{n} \log \phi_{p} [0 \lra \calH^{\vec w}_{\geq n}],
\end{align*}
with the existence of the limit being guaranteed by sub-additivity arguments. 
This definition is reminiscent of the one of the inverse correlation length, with the point-to-point connection being replaced by a point-to-hyperplane connection.
For coherence, we set~$\xi_p(0) = \xi^*_p(0) = \infty$. 
Observe that~\eqref{eq:pi1pure_exp} states that, for $\vec w \in \bbS^1$, 
\begin{align}\label{eq:xi*zeta}
	\xi^*(\vec w) = \zeta(p,\vec w)L(p).
\end{align}

It is classical (see~\cite{miraclesolesurfacetension} or~\cite{campaninonioffevelenikozrandomcluster} for a more modern exposition) that both~$\xi_p(\cdot)^{-1}$ and~$\xi^*_p(\cdot)^{-1}$ define norms on~$\R^2$. 
Indeed they are both positive homogeneous, and FKG inequalities imply that they are convex. 

Define their unit balls
\begin{align*}
	\mathcal U_p = \{\vec v \in \R^2: \xi_p(\vec v) \geq 1\} \quad \text{ and } 	\quad \mathcal W_p = \{\vec w \in \R^2: \xi^*_p(\vec w) \geq 1\}.
\end{align*}
The latter is called the {\em Wulff shape} (see~\cite{bodineau2000rigorous} for an extensive review on the subject).

The goal of this section is to explain how our approach allows to re-prove some known results about~$\mathcal U_p$ and~$\mathcal W_p$.
The method is different of~\cite{campaninonioffevelenikozrandomcluster}, as it does not rely on the analysis of the regularity properties of the associated Ruelle--Perron--Frobenius operator.
Indeed we crucially use the invertibility of the function $\vec v$ defined in the last section.

\begin{Th}\label{thm: strict convexity}
	For any~$p<p_c$,~$\mathcal U_p$ and~$\mathcal W_p$ are strictly convex bounded sets of~$\R^2$, symmetric with respect to the coordinate axis with differentiable boundaries. 
	Moreover, they are convex dual to each other, \emph{i.e.} they are linked by the following relation:
	\begin{align}\label{eq:UV}
	\mathcal U_p = \{\vec v \in \R^2: \langle \vec v, \vec w \rangle \leq \xi^*_p(\vec w),  \forall \vec w \in \bbS^1\}
	~
	\text{ and }
	~
	\mathcal W_p = \{\vec w \in \R^2: \langle \vec w, \vec v \rangle \leq \xi_p(\vec v),  \forall \vec v \in \bbS^1\}.
	\end{align}
\end{Th}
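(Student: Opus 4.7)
The plan is to deduce all the claimed properties from the convex-analytic relationship between $\mathcal U_p$ and $\mathcal W_p$, with the bijection $\vec w\mapsto \vec v(\vec w)$ of Proposition~\ref{prop:v_one_to_one} as the main input. First I would dispose of the easy parts: $\mathcal U_p$ and $\mathcal W_p$ are bounded because $\xi_p$ and $\xi^*_p$ are bounded and bounded away from $0$ on $\bbS^1$ for every fixed $p<p_c$, convex because $1/\xi_p$ and $1/\xi^*_p$ are norms (subadditive thanks to FKG), and symmetric under reflections across the coordinate axes by the corresponding invariance of $\phi_p$.

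The duality \eqref{eq:UV} is the first substantive step. Comparing $\phi_p[0\lra \lfloor n\vec v\rfloor]$ with $\phi_p[0\lra \calH^{\vec w}_{\geq n\langle\vec v,\vec w\rangle}]$ and passing to the log-limit yields
\begin{equation*}
\xi^*_p(\vec w)\geq \xi_p(\vec v_0)\langle \vec v_0,\vec w\rangle \quad\text{for every } \vec v_0,\vec w\in\bbS^1,
\end{equation*}
while \eqref{eq:zeta_xi} together with \eqref{eq:xi*zeta} give the matching equality $\xi^*_p(\vec w)=\xi_p(\vec v(\vec w))\langle \vec v(\vec w),\vec w\rangle$. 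One inclusion in the first half of \eqref{eq:UV} is immediate from the inequality; for the reverse, if $\vec v\notin\mathcal U_p$ I would apply the saturating identity at the unique $\vec w\in\bbS^1$ with $\vec v(\vec w)=\vec v/\|\vec v\|$ (existence given by Proposition~\ref{prop:v_one_to_one}), which forces $\langle \vec v,\vec w\rangle>\xi^*_p(\vec w)$. The second formula in \eqref{eq:UV} then follows from the bipolar theorem, since $\mathcal W_p=\mathcal U_p^\circ$.

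Next I would prove smoothness and strict convexity of $\mathcal U_p$; duality transfers both properties to $\mathcal W_p$, smoothness of one being equivalent to strict convexity of the other. For smoothness, the normal cone to $\mathcal U_p$ at $\xi_p(\vec v_0)\vec v_0$ is the set of $\vec w$ for which $\vec v_0$ achieves the maximum of $\vec v\mapsto \xi_p(\vec v)\langle \vec v,\vec w\rangle$; it is one-dimensional as soon as there is a unique $\vec w\in\bbS^1$ with $\vec v(\vec w)=\vec v_0$, which is the injectivity half of Proposition~\ref{prop:v_one_to_one}. For strict convexity I would argue by contradiction: if $\partial\mathcal U_p$ contained a flat facet perpendicular to some $\vec w_0$ with distinct extremities $\xi_p(\vec v_1)\vec v_1$ and $\xi_p(\vec v_2)\vec v_2$, then for small $\delta>0$ the supports of $\mathcal U_p$ in directions $\rho_{+\delta}(\vec w_0)$ and $\rho_{-\delta}(\vec w_0)$ would touch $\partial\mathcal U_p$ at those two different extremities, so $\vec v(\rho_{+\delta}(\vec w_0))\to\vec v_1$ and $\vec v(\rho_{-\delta}(\vec w_0))\to\vec v_2$ as $\delta\downarrow 0$, contradicting the continuity of $\vec v(\cdot)$ that was established inside the proof of Proposition~\ref{prop:v_one_to_one}.

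The only delicate point is the no-flat-facet argument; everything else reduces to bookkeeping around the identity $\xi^*_p(\vec w)=\xi_p(\vec v(\vec w))\langle \vec v(\vec w),\vec w\rangle$ and the injectivity plus continuity of $\vec v(\cdot)$. The conceptual load has already been carried by the KMRP analysis that set up $\vec v(\cdot)$ as a bijective, continuous map of $\bbS^1$ onto itself.
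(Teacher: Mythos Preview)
Your approach matches the paper's: both obtain the duality~\eqref{eq:UV} from the comparison $\phi_p[0\lra n\vec v]\leq\phi_p[0\lra\calH^{\vec w}_{\geq n\langle\vec v,\vec w\rangle}]$ together with the saturating identity coming from~\eqref{eq:zeta_xi} and~\eqref{eq:xi*zeta}, and both extract strict convexity and smoothness from the fact that $\vec w\mapsto\vec v(\vec w)$ is a continuous bijection of~$\bbS^1$.

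There is, however, an ordering gap in your smoothness argument. You identify the normal cone to $\mathcal U_p$ at $\xi_p(\vec v_0)\vec v_0$ with the set $\{\vec w\in\bbS^1:\vec v(\vec w)=\vec v_0\}$, but a priori only the inclusion
\[
\{\vec w:\vec v(\vec w)=\vec v_0\}\ \subseteq\ \{\vec w:\xi_p(\vec v_0)\langle\vec v_0,\vec w\rangle=\xi^*_p(\vec w)\}
\]
holds: $\vec v(\vec w)$ is \emph{one} point of the face of $\mathcal U_p$ exposed by~$\vec w$, not the whole face. If the face in direction $\vec w$ is a nondegenerate segment, then $\vec w$ lies in the normal cone of every point on that segment, yet $\vec v(\vec w)$ singles out only one of them. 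So injectivity of $\vec v(\cdot)$ alone does not bound the normal cone. The reverse inclusion becomes an equality precisely once each exposed face is a singleton, i.e.\ once strict convexity of $\mathcal U_p$ is already in hand.

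The fix is simply to run your two arguments in the opposite order: establish strict convexity first via your continuity-of-$\vec v(\cdot)$ contradiction (which is self-contained and correct), and then smoothness follows since now the exposed face in each direction $\vec w$ is exactly $\{\xi_p(\vec v(\vec w))\vec v(\vec w)\}$ and injectivity finishes. The paper compresses both steps into the single word ``bijectivity'', but your explicit no-facet argument via continuity is in fact the substance behind that compression.
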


In the language of convex bodies,~$\mathcal U_p$ is the convex body with \emph{gauge function} given by~$\xi_p$ 
and the first equality in~\eqref{eq:UV} states that its  \emph{support function} is~$\xi^*_p$.
The same holds for its dual $\mathcal W_p$, with the roles of $\xi_p$ and $\xi^*_p$ reversed; see \cite{webster1994convexity} for details. 
The strict convexity of one shape is equivalent to the fact that the boundary of the other is differentiable~\cite{webster1994convexity}.
The strict convexity of $\mathcal U_p$ and $\mathcal W_p$ is a consequence of our construction;
their duality may be proved directly.

\begin{proof}

Classical results on convex duality~\cite[polar duality theorem, p. 238]{webster1994convexity} imply that the two identities of~\eqref{eq:UV} are equivalent. 
We prove the first one.

It is immediate that $0$ satisfies the conditions of the right-hand side terms of~\eqref{eq:UV}. 

Let~$\vec v \in \mathbb R^2\setminus \{0\}$ and $\vec w \in \bbS^1$ be a unit vector with $\langle \vec v, \vec w \rangle>0$. 
The hyperplane with normal vector~$\vec w$ containing the vertex~$n\vec v$ is given by~$\partial\calH^{\vec w}_{\geq  n\langle \vec v, \vec w\rangle}$.
Thus,
\begin{equation*}
\phi_p[0 \lra n\vec v] \leq \phi_p[ 0 \lra \calH^{ \vec w}_{\geq n \langle \vec v, \vec w\rangle}].
\end{equation*}
Considering the exponential rate of decay of these two quantities as $n\to\infty$, we find
\begin{equation}\label{equ: duality wulff}
\xi_p(\vec v) \langle \vec v, \vec w\rangle \leq \xi_p^*(\vec w).
\end{equation}
The inequality holds trivially when $\langle \vec v, \vec w \rangle\leq 0$. Thus
\begin{equation*}
\mathcal U_p \subset \{\vec v \in \R^2: \langle \vec v, \vec w \rangle \leq \xi^*_p(\vec w),  \forall \vec w \in \R^2\}.
\end{equation*}

For the converse inclusion, let~$\vec v$ be a vector such that for any~$\vec w \in \bbS^1$, $\langle \vec v, \vec w \rangle \leq \xi^*_p(\vec w)$.
Lemma~\ref{prop:v_one_to_one} yields the existence of~$\vec w \in \bbS^1$ such that~$v(\vec w) = \vec v / \|\vec v\|$. 
Then~\eqref{eq:zeta_xi} and~\eqref{eq:xi*zeta} imply that
\begin{equation*}
\xi_p(\vec v ) 
= \frac{1}{\|\vec v\|}\xi_p(v(\vec w)) 
= \frac{\xi^*_p(\vec w)}{\|\vec v\| \langle \vec v(\vec w), \vec w \rangle}
= \frac{\xi^*_p(\vec w)}{\langle \vec v, \vec w \rangle}
\geq 1.
\end{equation*}

We turn to the strict convexity and differentiability properties.
As a standard consequence of the polar duality~\eqref{eq:UV}, for any pair $\vec v, \vec w$ with $\vec v = \vec v(\vec w)$,
$\vec w$ is the normal vector (pointing outwards) of a tangent to $\mathcal U_p$  at the point $\xi_p(\vec v) \cdot \vec v$
and $\vec v$ is the normal vector (pointing outwards) of a tangent to $\mathcal W_p$ at the point $\xi_p^*(\vec w) \cdot \vec w$.

By the bijectivity of $\vec{w} \mapsto \vec v(\vec w)$ proved in Proposition~\ref{prop:v_one_to_one}, we conclude that neither $\mathcal U_p$ or $\mathcal V_p$ have facets; that is, they are strictly convex. 
As strict convexity of a convex body implies differentiability of the boundary of its convex dual, this concludes the proof of the theorem.
\end{proof}

\subsection{Invariance principle}

Fix~$p < p_c$ and~$\vec w \in \bbS^1$. 	
Define the linear interpolation~$X(t)_{t \geq 0}$ of the discrete process~$(X_n)_{n \geq 0}$.
The representation of the cluster in terms of a random-walk like object allows to prove several invariance principles, similar to Donsker's Theorem for random walks. 
For instance, non-uniform invariance principles similar to Theorem~\ref{thm: invariance principle} were derived in~\cite{campaninonioffevelenikozrandomcluster} and~\cite{DAl2024}.
Our representation allows to prove the two following invariance principles. 

\begin{Th}\label{thm: invariance principle}~
\begin{itemize}
\item Under the family of measures~$\phi_p[~\cdot~\vert~0 \lra \calH_{\geq n}^{\vec w}]$, uniformly in~$p$ and~$\vec w$, when~$n \rightarrow \infty$,
\begin{align*}
\tfrac{1}{\sqrt n L(p)} \big( X_{nt} - t  n \mu(p, \vec w) \big)_{t \in (0,1)} \Rightarrow \big( \mathsf B_{t}^{\sigma(p, \vec w)} \big)_{t \in (0,1)},
\end{align*}
where~$B_{t}^{ \sigma(p)}$ denotes the one-dimensional Brownian motion started at 0 with variance~$\sigma(p, \vec w)$ given by Proposition~\ref{prop:LCLT}.
\item Under the family of measures~$\phi_p[~\cdot~\vert~0 \lra n\vec v]$, uniformly in~$p$ and~$\vec v$, when~$n \rightarrow \infty$,
\begin{align*}
\tfrac{1}{\sqrt n L(p)} \big( X_{nt} - t  n \mu(p, \vec w) \big)_{t \in (0,1)} \Rightarrow \big( \mathsf{BB}_{t}^{\sigma(p, \vec w)} \big)_{t \in (0,1)},
\end{align*}
where~$\vec w$ is the unique unit vector such that~$\vec v(\vec w)= \vec v~$ and~$\mathsf{BB}_{t}^{\sigma(p, \vec w)}$ is the one-dimensional Brownian bridge started at $0$ and ending at $0$ with variance~$\sigma(p, \vec w)$.
%\item The actual cluster is contained in cones of apexes at points $(T_k, X_{T_k})$. Is it thus also possible to deduce that the whole percolation cluster $\sfC$, properly rescaled, converges to the graph of a Brownian motion and bridge under $\phi_p[~\cdot~\vert~0 \lra \calH_{\geq n}^{\vec w}]$ and $\phi_p[~\cdot~\vert~0 \lra n\vec v]$, respectively.
\end{itemize}
In both statements, the convergence occurs in the space of continuous functions from~$[0,1]$ to~$\mathbb R$, endowed with the topology of uniform convergence. 
\end{Th}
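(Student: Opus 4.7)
The strategy is to leverage the killed Markov renewal process structure of Theorem~\ref{thm:killed_renewal_structure}, combined with Remark~\ref{rem: size_bias}, to reduce both invariance principles to classical Donsker-type theorems for i.i.d.\ sums with exponential tails.

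\textbf{Part (1): Brownian motion limit.} Under $\phi_p[\,\cdot\,|\,X_n \neq \dagger]$, the inter-renewal pairs $\Delta_k = (T_{k+1}-T_k,\, X_{T_{k+1}}-X_{T_k})$ for $1 \leq k < K_n-1$ (where $K_n$ denotes the number of renewals before time $n$) become, in the $n\to\infty$ limit, i.i.d.\ with law $\calL_{\rm irred}$ --- the exponential tilt identified in Remark~\ref{rem: size_bias}. By the mass-gap assertion of Theorem~\ref{thm:killed_renewal_structure}, both coordinates of $\calL_{\rm irred}$ have exponential tails \emph{uniformly} in $p$ and $\vec{w}$, and the means $\bar{\tau}(p,\vec{w}) := \mathbb{E}_{\calL_{\rm irred}}[T_2-T_1]$ and $\bar{\delta}(p,\vec{w}) := \mathbb{E}_{\calL_{\rm irred}}[X_2-X_1]/L(p)$ are uniformly bounded away from $0$ and $\infty$; the identification $\mu(p,\vec{w}) = \bar{\delta}/\bar{\tau}$ follows by a law of large numbers. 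I would first apply Donsker's theorem to the partial sums $\sum_{k \leq m t}(\Delta_k - \mathbb{E}\Delta_k)$, yielding a two-dimensional Brownian limit with covariance matrix uniformly bounded away from $0$ and $\infty$.

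Next, a renewal LLN (with exponential concentration from the tails) gives $K_{nt}/n \to t/\bar{\tau}$ uniformly, so a standard time-change argument converts the process-level convergence of the $\Delta$-walk into the one-dimensional invariance principle stated for $(X_{nt} - tn\mu(p,\vec{w}))/(\sqrt{n}L(p))$, with the correct variance $\sigma(p,\vec{w})^2$ as in Proposition~\ref{prop:LCLT}. The contribution of the (non-stationary) first renewal block is negligible by~\eqref{eq:KMRP_exp_tail_init1}--\eqref{eq:KMRP_exp_tail_init2}, and the cone-containment property of Remark~\ref{rem:cone_contained} ensures that the fluctuation of $X_t$ between successive renewal times is of order $L(p)$ with uniform exponential tails, so the piecewise-linear interpolation at non-integer times does not contribute at scale $\sqrt{n}L(p)$. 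Tightness follows from a modulus-of-continuity bound, itself a direct consequence of the uniform exponential tails on the $\Delta_k$.

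\textbf{Part (2): Brownian bridge limit.} Here we impose the additional conditioning on the endpoint, $\{0 \lra n\vec{v}\}$, which (for $\vec{w}$ chosen so that $\vec v(\vec w) = \vec v$) is essentially the conditioning $\lfloor X_n/L(p)\rfloor = \lfloor n\mu\rfloor + O(1)$. By the uniform local limit theorem of Proposition~\ref{prop:LCLT}, this conditioning has probability of order $1/\sqrt{n}$, and the standard bridge reduction for i.i.d.\ random walks with exponential tails (cf.~\cite{Borovkov_2022}) yields convergence of the conditioned path to a Brownian bridge of variance $\sigma(p,\vec{w})^2$. The passage from the discrete walk $X_{T_k}$ back to $X_{nt}$ uses the same time-change and interpolation arguments as in Part (1), with uniformity preserved throughout.

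\textbf{Main obstacle.} The principal technical burden is tracking \emph{uniformity} in $p < p_c$ and $\vec{w}$ at every step: one needs uniform Berry--Esseen-type bounds for the Donsker convergence, a uniform rate in the renewal LLN, and uniform smoothness of the local CLT used in the bridge reduction. All of these are available because Theorem~\ref{thm:killed_renewal_structure} provides uniform exponential tails, uniform nondegeneracy of $\sigma_X$, and uniform boundedness of the killing rate away from both $0$ and $1$; the challenge is purely bookkeeping rather than a genuinely new estimate.
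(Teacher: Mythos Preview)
Your proposal is correct and follows essentially the same approach as the paper: both reduce the invariance principle to the KMRP structure of Theorem~\ref{thm:killed_renewal_structure}, invoke classical Donsker-type results for compound renewal processes (the paper cites~\cite[Theorem 1.5.3]{Borovkov_2022} directly, while you unpack the time-change argument), and derive the bridge limit from the Brownian motion limit together with the local CLT of Proposition~\ref{prop:LCLT}. Your write-up is in fact more detailed than the paper's proof, which simply defers to~\cite{Borovkov_2022} and~\cite{DAl2024} for the classical machinery.
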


\begin{proof}
This follows directly from Theorem~\ref{thm:killed_renewal_structure} together with classical considerations on Markov renewal processes. 
The first invariance principle directly follows from~\cite[Theorem 1.5.3]{Borovkov_2022}.
The second is a consequence of the first together with the local limit estimate of Proposition~\ref{prop:LCLT}. 
As this is classical, we do not give further details, and refer to~\cite{DAl2024} for the formal reasoning leading to the invariance principle. 
\end{proof}

By a standard union bound, it may be proved that, with probability tending to $1$ 
under both  $\phi_p[~\cdot~\vert~0 \lra \calH_{\geq n}^{\vec w}]$ and  $\phi_p[~\cdot~\vert~0 \lra n\vec v]$, 
\begin{align}
	\max_k (T_{k+1} - T_k) \leq C\log(n/L(p)),
\end{align} 
for $C>0$ a universal constant.

Due to the cone-containment property (see Remark~\ref{rem:cone_contained}), the full cluster is at a distance $C' L(p) \max_k(T_{k+1} - T_k)$ from the points of renewal $(L(p) T_k,X_{T_k})_{k}$. Thus, with probability tending to $1$ under both of the measures above, 
\begin{align}
d_{\calH} \big(\sfC, (L(p)t,X_t)_{0 \leq t \leq n/L(p)}\big) \leq C L(p) \log(n/L(p)),
\end{align}
for some universal constant $C$, where $d_{\calH}$ is the Hausdorff distance and $(L(p)t,X_t)_{0 \leq t \leq n/L(p)}$ is the family of points defined as in \eqref{eq:X_t_def}, expressed in the basis $(\vec w, \vec w^\perp)$. 
 
Thus, one may easily deduce from Theorem~\ref{thm: invariance principle} that the full cluster $\sfC$ sampled according to $\phi_p[~\cdot~\vert~0 \lra \calH_{\geq n}^{\vec w}]$ or  $\phi_p[~\cdot~\vert~0 \lra n\vec v]$, when properly rescaled, converges in law\footnote{We consider the convergence in law for sets of $\mathbb R^2$ under the Hausdorff distance.}  to the graph of a Brownian motion and bridge, respectively.

\section*{Acknowledgements}
The authors are grateful to S\'ebastien Ott for helpful discussions and insights into the Ornstein--Zernike theory. 
We also thank Gordon Slade for useful comments related to this work.
This work was partially supported by the Swiss National Science Foundation.

\bibliographystyle{amsalpha}
\bibliography{biblioncoz.bib}

@book {palmer_book,
    AUTHOR = {Palmer, John},
     TITLE = {Planar {I}sing correlations},
    SERIES = {Progress in Mathematical Physics},
    VOLUME = {49},
 PUBLISHER = {Birkh\"{a}user Boston, Inc., Boston, MA},
      YEAR = {2007},
     PAGES = {xx+363},
      ISBN = {978-0-8176-4248-8},
   MRCLASS = {82-02 (34M55 37K10 60K35 82B20 82B44)},
  MRNUMBER = {2332010},
MRREVIEWER = {Jes\'{u}s\ Salas},
}

@article {MichtaSlade,
    AUTHOR = {Michta, Emmanuel and Slade, Gordon},
     TITLE = {Asymptotic behaviour of the lattice {G}reen function},
   JOURNAL = {ALEA Lat. Am. J. Probab. Math. Stat.},
  FJOURNAL = {ALEA. Latin American Journal of Probability and Mathematical
              Statistics},
    VOLUME = {19},
      YEAR = {2022},
    NUMBER = {1},
     PAGES = {957--981},
      ISSN = {1980-0436},
   MRCLASS = {60K35 (33C10 41A60 82B27)},
  MRNUMBER = {4448687},
       DOI = {10.30757/alea.v19-38},
       URL = {https://doi.org/10.30757/alea.v19-38},
}

@misc{LiuSlade,
      title={Crossover from subcritical to critical decay: random walk, self-avoiding walk, percolation}, 
      author={Yucheng Liu and Gordon Slade},
      year={2026},
      howpublished={arXiv preprint 2605.15545},
      eprint={2605.15545},
      archivePrefix={arXiv},
      primaryClass={math.PR},
      url={https://arxiv.org/abs/2605.15545}, 
}

@book{grimmetttherandomclustermodel,
	author = {Grimmett, G.R.},
	date-added = {2025-10-06 16:30:03 +0200},
	date-modified = {2025-10-06 16:30:33 +0200},
	isbn = {9783540328919},
	lccn = {2006925087},
	publisher = {Springer Berlin Heidelberg},
	series = {Grundlehren der mathematischen Wissenschaften},
	title = {The Random-Cluster Model},
	url = {https://books.google.ch/books?id=UfvxyLIMalgC},
	year = {2006},
	bdsk-url-1 = {https://books.google.ch/books?id=UfvxyLIMalgC}}

@article{OttVelenikPottsDefect,
	author = {Ott, S\'{e}bastien and Velenik, Yvan},
	doi = {10.1007/s00220-018-3197-6},
	fjournal = {Communications in Mathematical Physics},
	issn = {0010-3616},
	journal = {Comm. Math. Phys.},
	mrclass = {82D40 (82B20)},
	mrnumber = {3833604},
	number = {1},
	pages = {55--106},
	title = {Potts models with a defect line},
	url = {https://doi.org/10.1007/s00220-018-3197-6},
	volume = {362},
	year = {2018},
	bdsk-url-1 = {https://doi.org/10.1007/s00220-018-3197-6}}

@article{miraclesolesurfacetension,
	author = {Miracle-Sole, Salvador},
	date = {1995/04/01},
	date-added = {2025-07-18 12:17:44 +0200},
	date-modified = {2025-07-18 12:17:44 +0200},
	doi = {10.1007/BF02179386},
	id = {Miracle-Sole1995},
	isbn = {1572-9613},
	journal = {Journal of Statistical Physics},
	number = {1},
	pages = {183--214},
	title = {Surface tension, step free energy, and facets in the equilibrium crystal},
	url = {https://doi.org/10.1007/BF02179386},
	volume = {79},
	year = {1995},
	bdsk-url-1 = {https://doi.org/10.1007/BF02179386}}

@misc{dober2025discontinuoustransition2dpotts,
	archiveprefix = {arXiv},
	author = {Moritz Dober and Alexander Glazman and S{\'e}bastien Ott},
	eprint = {2502.04129},
	primaryclass = {math.PR},
	title = {Discontinuous transition in {2D} {P}otts: {I}. {O}rder-Disorder Interface convergence},
	url = {https://arxiv.org/abs/2502.04129},
	year = {2025},
	bdsk-url-1 = {https://arxiv.org/abs/2502.04129}}

@article{DumTas19,
	author = {Duminil-Copin, Hugo and Tassion, Vincent},
	doi = {10.17323/1609-4514-2020-20-4-711-740},
	fjournal = {Moscow Mathematical Journal},
	issn = {1609-3321},
	journal = {Mosc. Math. J.},
	mrclass = {82B43 (60K35)},
	mrnumber = {4203056},
	mrreviewer = {Ivailo Hartarsky},
	number = {4},
	pages = {711--740},
	title = {Renormalization of crossing probabilities in the planar random-cluster model},
	url = {https://doi.org/10.17323/1609-4514-2020-20-4-711-740},
	volume = {20},
	year = {2020},
	bdsk-url-1 = {https://doi.org/10.17323/1609-4514-2020-20-4-711-740}}

@article{DumManTas21,
	abstract = {This paper is studying the critical regime of the planar random-cluster model on {\$}{\$}{\{}{$\backslash$}mathbb {\{}Z{\}}{\}}\^{}2{\$}{\$}with cluster-weight {\$}{\$}q{$\backslash$}in {$[$}1,4){\$}{\$}. More precisely, we prove crossing estimates in quads which are uniform in their boundary conditions and depend only on their extremal lengths. They imply in particular that any fractal boundary is touched by macroscopic clusters, uniformly in its roughness or the configuration on the boundary. Additionally, they imply that any sub-sequential scaling limit of the collection of interfaces between primal and dual clusters is made of loops that are non-simple. We also obtain a number of properties of so-called arm-events: three universal critical exponents (two arms in the half-plane, three arms in the half-plane and five arms in the bulk), quasi-multiplicativity and well-separation properties (even when arms are not alternating between primal and dual), and the fact that the four-arm exponent is strictly smaller than 2. These results were previously known only for Bernoulli percolation ({\$}{\$}q = 1{\$}{\$}) and the FK-Ising model ({\$}{\$}q = 2{\$}{\$}). Finally, we prove new bounds on the one, two and four-arm exponents for {\$}{\$}q{$\backslash$}in {$[$}1,2{$]$}{\$}{\$}, as well as the one-arm exponent in the half-plane. These improve the previously known bounds, even for Bernoulli percolation.},
	author = {Duminil-Copin, Hugo and Manolescu, Ioan and Tassion, Vincent},
	date = {2021/11/01},
	date-added = {2024-12-08 15:08:33 +0100},
	date-modified = {2024-12-08 15:08:33 +0100},
	doi = {10.1007/s00440-021-01060-6},
	id = {Duminil-Copin2021},
	isbn = {1432-2064},
	journal = {Probability Theory and Related Fields},
	number = {1},
	pages = {401--449},
	title = {Planar random-cluster model: fractal properties of the critical phase},
	url = {https://doi.org/10.1007/s00440-021-01060-6},
	volume = {181},
	year = {2021},
	bdsk-url-1 = {https://doi.org/10.1007/s00440-021-01060-6}}

@book{athreya2004branching,
	author = {Athreya, K.B. and Ney, P.E.},
	isbn = {9780486434742},
	lccn = {2003065967},
	publisher = {Dover Publications},
	series = {Dover Books on Mathematics},
	title = {Branching Processes},
	url = {https://books.google.fr/books?id=WkXpDQAAQBAJ},
	year = {2004},
	bdsk-url-1 = {https://books.google.fr/books?id=WkXpDQAAQBAJ}}

@article{RaySpi20,
	author = {Ray, Gourab and Spinka, Yinon},
	date-added = {2024-12-08 15:05:15 +0100},
	date-modified = {2024-12-08 15:05:15 +0100},
	journal = {Communications in Mathematical Physics},
	number = {3},
	pages = {1977--1988},
	publisher = {Springer},
	title = {A Short Proof of the Discontinuity of Phase Transition in the Planar Random-Cluster Model with $q>4$},
	volume = {378},
	year = {2020}}

@article{Kestenscalingrelations,
	author = {Kesten, Harry},
	fjournal = {Communications in Mathematical Physics},
	issn = {0010-3616},
	journal = {Comm. Math. Phys.},
	mrclass = {60K35 (82A43)},
	mrnumber = {879034},
	mrreviewer = {R. T. Smythe},
	number = {1},
	pages = {109--156},
	title = {Scaling relations for {$2$}{D}-percolation},
	url = {http://projecteuclid.org/euclid.cmp/1104116714},
	volume = {109},
	year = {1987},
	bdsk-url-1 = {http://projecteuclid.org/euclid.cmp/1104116714}}

@article{DAl2024,
	author = {Lucas D'Alimonte},
	doi = {10.1214/24-EJP1127},
	fjournal = {Electronic Journal of Probability},
	issn = {1083-6489},
	journal = {Electron. J. Probab.},
	pages = {1-53},
	pno = {68},
	sici = {1083-6489(2024)29:68<1:ERASLF>2.0.CO;2-Q},
	title = {Entropic repulsion and scaling limit for a finite number of non-intersecting subcritical {FK} interfaces},
	volume = {29},
	year = {2024},
	bdsk-url-1 = {https://doi.org/10.1214/24-EJP1127}}

@article{paeslemeOZ,
	author = {Paes-Leme, Paulo Jorge},
	doi = {10.1016/0003-4916(78)90160-4},
	fjournal = {Annals of Physics},
	issn = {0003-4916},
	journal = {Ann. Physics},
	mrclass = {81C99 (00A89 82A67)},
	mrnumber = {513891},
	number = {2},
	pages = {367--387},
	title = {Ornstein-{Z}ernike and analyticity properties for classical lattice spin systems},
	url = {https://doi.org/10.1016/0003-4916(78)90160-4},
	volume = {115},
	year = {1978},
	bdsk-url-1 = {https://doi.org/10.1016/0003-4916(78)90160-4}}

@article{abrahamkunzOZ,
	author = {Abraham, D. B. and Kunz, H.},
	doi = {10.1103/PhysRevLett.39.1011},
	fjournal = {Physical Review Letters},
	issn = {0031-9007},
	journal = {Phys. Rev. Lett.},
	mrclass = {76.45},
	mrnumber = {452115},
	number = {16},
	pages = {1011--1014},
	title = {Ornstein-{Z}ernike theory of classical fluids at low density},
	url = {https://doi.org/10.1103/PhysRevLett.39.1011},
	volume = {39},
	year = {1977},
	bdsk-url-1 = {https://doi.org/10.1103/PhysRevLett.39.1011}}

@article{bodineau2000rigorous,
	author = {Bodineau, T. and Ioffe, D. and Velenik, Y.},
	doi = {10.1063/1.533180},
	fjournal = {Journal of Mathematical Physics},
	issn = {0022-2488},
	journal = {J. Math. Phys.},
	mrclass = {82B20 (82-02 82B26)},
	mrnumber = {1757951},
	mrreviewer = {Giambattista Giacomin},
	note = {Probabilistic techniques in equilibrium and nonequilibrium statistical physics},
	number = {3},
	pages = {1033--1098},
	title = {Rigorous probabilistic analysis of equilibrium crystal shapes},
	url = {https://doi.org/10.1063/1.533180},
	volume = {41},
	year = {2000},
	bdsk-url-1 = {https://doi.org/10.1063/1.533180}}

@article{Zernike,
	adsnote = {Provided by the SAO/NASA Astrophysics Data System},
	adsurl = {https://ui.adsabs.harvard.edu/abs/1914KNAB...17..793.},
	author = {Zernike, F.},
	date-modified = {2025-10-08 14:07:10 +0200},
	journal = {Koninklijke Nederlandse Akademie van Wetenschappen Proceedings Series B Physical Sciences},
	pages = {1520-1527},
	title = {The clustering-tendency of the molecules in the critical state and the extinction of light caused thereby},
	volume = {18 II},
	year = 1916}

@article{OrnsteinandZernike,
	adsnote = {Provided by the SAO/NASA Astrophysics Data System},
	adsurl = {https://ui.adsabs.harvard.edu/abs/1914KNAB...17..793.},
	author = {Ornstein, L. and Zernike, F.},
	date-modified = {2025-10-08 14:06:57 +0200},
	journal = {Koninklijke Nederlandse Akademie van Wetenschappen Proceedings Series B Physical Sciences},
	pages = {793-806},
	title = {Accidental deviations of density and opalescence at the critical point of a single substance},
	volume = {17},
	year = 1914}

@article{bricmontfrohlichozperturbative,
	author = {Bricmont, J. and Fr\"{o}hlich, J.},
	doi = {10.1016/0550-3213(85)90276-7},
	fjournal = {Nuclear Physics. B. Theoretical, Phenomenological, and Experimental High Energy Physics. Quantum Field Theory and Statistical Systems},
	issn = {0550-3213},
	journal = {Nuclear Phys. B},
	mrclass = {81E25 (82A05 82A68)},
	mrnumber = {786780},
	mrreviewer = {Th\'{o}rdur J\'{o}nsson},
	number = {4},
	pages = {517--552},
	title = {Statistical mechanical methods in particle structure analysis of lattice field theories. {I}. {G}eneral theory},
	url = {https://doi.org/10.1016/0550-3213(85)90276-7},
	volume = {251},
	year = {1985},
	bdsk-url-1 = {https://doi.org/10.1016/0550-3213(85)90276-7}}

@article{perturbativeoz,
	author = {Minlos, R. A. and Zhizhina, E. A.},
	doi = {10.1007/BF02179578},
	fjournal = {Journal of Statistical Physics},
	issn = {0022-4715},
	journal = {J. Statist. Phys.},
	mrclass = {82B20 (82B23 82B31)},
	mrnumber = {1401253},
	mrreviewer = {Anatoliy Yu. Zakharov},
	number = {1-2},
	pages = {85--118},
	title = {Asymptotics of decay of correlations for lattice spin fields at high temperatures. {I}. {T}he {I}sing model},
	url = {https://doi.org/10.1007/BF02179578},
	volume = {84},
	year = {1996},
	bdsk-url-1 = {https://doi.org/10.1007/BF02179578}}

@article{chayeschayesozsawaxis,
	author = {Chayes, J. T. and Chayes, L.},
	fjournal = {Communications in Mathematical Physics},
	issn = {0010-3616},
	journal = {Comm. Math. Phys.},
	mrclass = {82A05 (60J15 82A25 82A68)},
	mrnumber = {849206},
	mrreviewer = {Anton Bovier},
	number = {2},
	pages = {221--238},
	title = {Ornstein-{Z}ernike behavior for self-avoiding walks at all noncritical temperatures},
	url = {http://projecteuclid.org/euclid.cmp/1104115332},
	volume = {105},
	year = {1986},
	bdsk-url-1 = {http://projecteuclid.org/euclid.cmp/1104115332}}

@book{webster1994convexity,
	author = {Webster, R.},
	isbn = {9780198531470},
	lccn = {93034951},
	publisher = {Oxford University Press},
	series = {Oxford science publications},
	title = {Convexity},
	url = {https://books.google.fr/books?id=SWQmykCwabwC},
	year = {1994},
	bdsk-url-1 = {https://books.google.fr/books?id=SWQmykCwabwC}}

@article{nonanalyticitycorrelationlength,
	author = {Aoun, Yacine and Ioffe, Dmitry and Ott, S\'{e}bastien and Velenik, Yvan},
	doi = {10.1007/s00220-021-04038-6},
	fjournal = {Communications in Mathematical Physics},
	issn = {0010-3616},
	journal = {Comm. Math. Phys.},
	mrclass = {82B20},
	mrnumber = {4287191},
	mrreviewer = {Simone Del Vecchio},
	number = {1},
	pages = {433--467},
	title = {Non-analyticity of the correlation length in systems with exponentially decaying interactions},
	url = {https://doi.org/10.1007/s00220-021-04038-6},
	volume = {386},
	year = {2021},
	bdsk-url-1 = {https://doi.org/10.1007/s00220-021-04038-6}}

@book{Borovkov_2022,
	author = {Borovkov, A. A.},
	collection = {Encyclopedia of Mathematics and its Applications},
	editor = {Alimov, AlexeyTranslator},
	place = {Cambridge},
	publisher = {Cambridge University Press},
	series = {Encyclopedia of Mathematics and its Applications},
	title = {Compound Renewal Processes},
	year = {2022}}

@article{2ptfunctionsaturation,
	author = {Aoun, Yacine and Ott, S\'{e}bastien and Velenik, Yvan},
	doi = {10.1007/s00220-022-04574-9},
	fjournal = {Communications in Mathematical Physics},
	issn = {0010-3616},
	journal = {Comm. Math. Phys.},
	mrclass = {82B20 (60K35 82B27)},
	mrnumber = {4576765},
	mrreviewer = {Olivier Couronn\'{e}},
	number = {2},
	pages = {1103--1138},
	title = {On the two-point function of the {P}otts model in the saturation regime},
	url = {https://doi.org/10.1007/s00220-022-04574-9},
	volume = {399},
	year = {2023},
	bdsk-url-1 = {https://doi.org/10.1007/s00220-022-04574-9}}

@book{TirgonometricseriesZygmund,
	author = {Zygmund, A.},
	edition = {Third},
	isbn = {0-521-89053-5},
	mrclass = {01A75 (42-02)},
	mrnumber = {1963498},
	note = {With a foreword by Robert A. Fefferman},
	pages = {xii; Vol. I: xiv+383 pp.; Vol. II: viii+364},
	publisher = {Cambridge University Press, Cambridge},
	series = {Cambridge Mathematical Library},
	title = {Trigonometric series. {V}ol. {I}, {II}},
	year = {2002}}

@article{Ioffe98ornstein-zernikebehaviour,
	author = {Ioffe, D.},
	fjournal = {Markov Processes and Related Fields},
	issn = {1024-2953},
	journal = {Markov Process. Related Fields},
	mrclass = {60K35 (60F15 60K05 82B41)},
	mrnumber = {1670027},
	mrreviewer = {Massimo Campanino},
	number = {3},
	pages = {323--350},
	title = {Ornstein-{Z}ernike behaviour and analyticity of shapes for self-avoiding walks on {${\mathbb{Z}}^d$}},
	volume = {4},
	year = {1998}}

@article{campaninonioffevelenikozrandomcluster,
	author = {Campanino, Massimo and Ioffe, Dmitry and Velenik, Yvan},
	doi = {10.1214/07-AOP359},
	fjournal = {The Annals of Probability},
	issn = {0091-1798},
	journal = {Ann. Probab.},
	mrclass = {60K35 (37C30 60F15 60K15 82B44)},
	mrnumber = {2435850},
	mrreviewer = {Rongfeng Sun},
	number = {4},
	pages = {1287--1321},
	title = {Fluctuation theory of connectivities for subcritical random cluster models},
	url = {https://doi.org/10.1214/07-AOP359},
	volume = {36},
	year = {2008},
	bdsk-url-1 = {https://doi.org/10.1214/07-AOP359}}

@article{DCgagnebinHarelManolescuDiscontinuity,
	author = {Duminil-Copin, Hugo and Gagnebin, Maxime and Harel, Matan and Manolescu, Ioan and Tassion, Vincent},
	doi = {10.24033/asens.2485},
	fjournal = {Annales Scientifiques de l'\'{E}cole Normale Sup\'{e}rieure. Quatri\`eme S\'{e}rie},
	issn = {0012-9593},
	journal = {Ann. Sci. \'{E}c. Norm. Sup\'{e}r. (4)},
	mrclass = {82B20 (82B26)},
	mrnumber = {4371394},
	number = {6},
	pages = {1363--1413},
	title = {Discontinuity of the phase transition for the planar random-cluster and {P}otts models with {$q>4$}},
	url = {https://doi.org/10.24033/asens.2485},
	volume = {54},
	year = {2021},
	bdsk-url-1 = {https://doi.org/10.24033/asens.2485}}

@article{DCSidoraviciusTassionContinuity,
	author = {Duminil-Copin, Hugo and Sidoravicius, Vladas and Tassion, Vincent},
	doi = {10.1007/s00220-016-2759-8},
	fjournal = {Communications in Mathematical Physics},
	issn = {0010-3616},
	journal = {Comm. Math. Phys.},
	mrclass = {82B26 (82B27 82B43)},
	mrnumber = {3592746},
	mrreviewer = {Farrukh Mukhamedov},
	number = {1},
	pages = {47--107},
	title = {Continuity of the phase transition for planar random-cluster and {P}otts models with {$1 \leq q \leq 4$}},
	url = {https://doi.org/10.1007/s00220-016-2759-8},
	volume = {349},
	year = {2017},
	bdsk-url-1 = {https://doi.org/10.1007/s00220-016-2759-8}}

@article{duminilbeffara,
	author = {Beffara, Vincent and Duminil-Copin, Hugo},
	doi = {10.1007/s00440-011-0353-8},
	fjournal = {Probability Theory and Related Fields},
	issn = {0178-8051},
	journal = {Probab. Theory Related Fields},
	mrclass = {60K35 (82B20)},
	mrnumber = {2948685},
	mrreviewer = {Enza Orlandi},
	number = {3-4},
	pages = {511--542},
	title = {The self-dual point of the two-dimensional random-cluster model is critical for {$q\geq 1$}},
	url = {https://doi.org/10.1007/s00440-011-0353-8},
	volume = {153},
	year = {2012},
	bdsk-url-1 = {https://doi.org/10.1007/s00440-011-0353-8}}

@article{longrangeozvelenikaounott,
	author = {Aoun, Yacine and Ott, S\'{e}bastien and Velenik, Yvan},
	doi = {10.1214/22-aihp1345},
	fjournal = {Annales de l'Institut Henri Poincar\'{e} Probabilit\'{e}s et Statistiques},
	issn = {0246-0203},
	journal = {Ann. Inst. Henri Poincar\'{e} Probab. Stat.},
	mrclass = {60K35 (82B20 82D60)},
	mrnumber = {4718378},
	number = {1},
	pages = {167--207},
	title = {Ornstein-{Z}ernike behavior for {I}sing models with infinite-range interactions},
	url = {https://doi.org/10.1214/22-aihp1345},
	volume = {60},
	year = {2024},
	bdsk-url-1 = {https://doi.org/10.1214/22-aihp1345}}

@article{ozisingcampanino,
	author = {Campanino, Massimo and Ioffe, Dmitry and Velenik, Yvan},
	doi = {10.1007/s00440-002-0229-z},
	fjournal = {Probability Theory and Related Fields},
	issn = {0178-8051},
	journal = {Probab. Theory Related Fields},
	mrclass = {82B20 (60F15 60K35 82B28)},
	mrnumber = {1964456},
	number = {3},
	pages = {305--349},
	title = {Ornstein-{Z}ernike theory for finite range {I}sing models above {$T_c$}},
	url = {https://doi.org/10.1007/s00440-002-0229-z},
	volume = {125},
	year = {2003},
	bdsk-url-1 = {https://doi.org/10.1007/s00440-002-0229-z}}

@article{ornsteinzernikebernoulli2002,
	author = {Campanino, Massimo and Ioffe, Dmitry},
	doi = {10.1214/aop/1023481005},
	fjournal = {The Annals of Probability},
	issn = {0091-1798},
	journal = {Ann. Probab.},
	mrclass = {60K35 (60F15 60K15 82B43)},
	mrnumber = {1905854},
	mrreviewer = {Yvan Velenik},
	number = {2},
	pages = {652--682},
	title = {Ornstein-{Z}ernike theory for the {B}ernoulli bond percolation on {$\mathbb{Z}^d$}},
	url = {https://doi.org/10.1214/aop/1023481005},
	volume = {30},
	year = {2002},
	bdsk-url-1 = {https://doi.org/10.1214/aop/1023481005}}

@article{CampaninoChayesChayesBernoulliperco,
	author = {Campanino, M. and Chayes, J. T. and Chayes, L.},
	date-modified = {2025-10-08 14:06:24 +0200},
	doi = {10.1007/BF01418864},
	fjournal = {Probability Theory and Related Fields},
	issn = {0178-8051},
	journal = {Probab. Theory Related Fields},
	mrclass = {60K35 (82B43)},
	mrnumber = {1100895},
	mrreviewer = {Neal Madras},
	number = {3},
	pages = {269--341},
	title = {Gaussian fluctuations of connectivities in the subcritical regime of percolation},
	url = {https://doi.org/10.1007/BF01418864},
	volume = {88},
	year = {1991},
	bdsk-url-1 = {https://doi.org/10.1007/BF01418864}}

@incollection{duminilcopin2017lectures,
	author = {Duminil-Copin, Hugo},
	booktitle = {Random graphs, phase transitions, and the {G}aussian free field},
	date-modified = {2025-10-08 14:06:08 +0200},
	doi = {10.1007/978-3-030-32011-9\_2},
	pages = {35--161},
	publisher = {Springer, Cham},
	series = {Springer Proc. Math. Stat.},
	title = {Lectures on the {I}sing and {P}otts models on the hypercubic lattice},
	volume = {304},
	year = {2020},
	bdsk-url-1 = {https://doi.org/10.1007/978-3-030-32011-9_2},
	bdsk-url-2 = {https://doi.org/10.1007/978-3-030-32011-9%5C_2}}

@article{DuminilCopinManolescuScalingRelations,
	author = {Duminil-Copin, Hugo and Manolescu, Ioan},
	doi = {10.1017/fmp.2022.16},
	fjournal = {Forum of Mathematics. Pi},
	journal = {Forum Math. Pi},
	mrclass = {60K35},
	mrnumber = {4515274},
	pages = {Paper No. e23, 83},
	title = {Planar random-cluster model: scaling relations},
	url = {https://doi.org/10.1017/fmp.2022.16},
	volume = {10},
	year = {2022},
	bdsk-url-1 = {https://doi.org/10.1017/fmp.2022.16}}

\end{document}